\numberwithin{equation}{section}
\newtheorem{assumption}[theorem]{Assumption}
\newcommand{\ba}{\begin{array}}
\newcommand{\ea}{\end{array}}
\newcommand{\bit}{\begin{itemize}}
\newcommand{\eit}{\end{itemize}}
\newcommand{\be}{\begin{equation}}
\newcommand{\ee}{\end{equation}}
\newcommand{\bee}{\begin{equation*}}
\newcommand{\eee}{\end{equation*}}
\newcommand{\bea}{\begin{eqnarray}}
\newcommand{\eea}{\end{eqnarray}}
\newcommand{\st}{\mathrm{s.t.}}
\newcommand{\bs}{\mathrm{\bf s}}
\newcommand{\bx}{\mathbf{x}}
\newcommand{\bp}{\mathbf{p}}
\newcommand{\hbx}{\widehat{\mathbf{x}}}
\newcommand{\hbg}{\widehat{\mathbf{g}}}
\newcommand{\hg}{\widehat{g}}
\newcommand{\bbx}{\overline{\mathbf{x}}}
\newcommand{\tbx}{\widetilde{\mathbf{x}}}
\newcommand{\bd}{\mathbf{d}}
\newcommand{\hbd}{\widehat{\mathbf{d}}}
\newcommand{\tbd}{\widetilde{\mathbf{d}}}
\newcommand{\bq}{\mathbf{q}}
\newcommand{\bu}{\mathbf{u}}
\newcommand{\bW}{\mathbf{W}}
\newcommand{\tbW}{\widetilde{\mathbf{W}}}
\newcommand{\Rmn}[1]{\uppercase\expandafter{\romannumeral#1}}
\newcommand{\Rcal}{\mathcal{R}}
\newcommand{\Pcal}{\mathcal{P}}
\numberwithin{equation}{section}
\newcommand{\Mcal}{\mathcal{M}}
\newcommand{\Ncal}{\mathcal{N}}
\newcommand{\Ccal}{\mathcal{C}}
\newcommand{\grad}{\mathrm{grad}}
\newcommand{\R}{\mathbb{R}}
\numberwithin{theorem}{section}
\newcommand{\iprod}[2]{\left \langle #1, #2 \right \rangle }
\title{Improving the communication in decentralized manifold optimization through single-step consensus and compression}
\author{
Jiang Hu\thanks{Department of Mathematics, University of California, Berkeley, CA 94720, US
(\email{hujiangopt@gmail.com}).}
\and Kangkang Deng\thanks{Corresponding author. Department of Mathematics,  National University of Defense Technology, Changsha, 410073,
China (\email{freedeng1208@gmail.com}).}
}
\begin{document}

\maketitle

\begin{abstract}
We are concerned with decentralized optimization over a compact submanifold, where the loss functions of local datasets are defined by their respective local datasets. A key challenge in decentralized optimization is mitigating the communication bottleneck, which primarily involves two strategies: achieving consensus and applying communication compression. Existing projection/retraction-type algorithms rely on multi-step consensus to attain both consensus and optimality. Due to the nonconvex nature of the manifold constraint, it remains an open question whether the requirement for multi-step consensus can be reduced to single-step consensus.
We address this question by carefully elaborating on the smoothness structure and the asymptotic 1-Lipschitz continuity associated with the manifold constraint. Furthermore, we integrate these insights with a communication compression strategy to propose a communication-efficient gradient algorithm for decentralized manifold optimization problems, significantly reducing per-iteration communication costs.
Additionally, we establish an iteration complexity of $\mathcal{O}(\epsilon^{-1})$ to find an $\epsilon$-stationary point, which matches the complexity in the Euclidean setting. Numerical experiments demonstrate the efficiency of the proposed method in comparison to state-of-the-art approaches.

\end{abstract}
\begin{keywords}
Decentralized optimization, compact submanifold, single-step consensus, compression
\end{keywords}

\begin{AMS}
 65K05, 65K10, 90C05, 90C26, 90C30
\end{AMS}

\section{Introduction}
Decentralized optimization has garnered significant interest due to its wide applications in large-scale distributed systems, such as distributed computing, machine learning, control, and signal processing. These systems often involve data spread across numerous agents or nodes, making centralized optimization approaches impractical due to challenges such as limited storage and computational resources. In this paper, we consider the decentralized smooth optimization over a compact submanifold embedded in the Euclidean space,
\be \label{prob:original}
\begin{aligned}
  \min \quad & \frac{1}{n}\sum_{i=1}^n f_i(x_i), \\
  \st \quad & x_1 = \cdots = x_n, \;\; x_i \in \Mcal, \;\; \forall i=1,2,\ldots, n,
\end{aligned}
\ee
where $n$ represents the total number of agents, $f_i$ is the smooth local objective at the $i$-th agent, and $\Mcal$ is a compact smooth embedded submanifold of $\R^{d\times r}$, e.g., the Stiefel manifold ${\rm St}(d,r):=\{ x \in \R^{d\times r} : x^\top x = I_r \}$. Applications of problem \eqref{prob:original} are ubiquitous in various tasks, e.g., principal component analysis \cite{ye2021deepca}, deep neural networks with batch normalization \cite{cho2017riemannian,hu2022riemannian}, and deep neural networks with orthogonal constraints \cite{arjovsky2016unitary,vorontsov2017orthogonality,huang2018orthogonal,eryilmaz2022understanding}. Decentralized optimization in Euclidean space (i.e., $\mathcal{M} = \mathbb{R}^{d \times r}$) has been extensively studied over the past few decades, see, e.g., \cite{bianchi2012convergence,shi2015extra,xu2015augmented,qu2017harnessing,di2016next,tatarenko2017non, wai2017decentralized,yuan2018exact,hong2017prox,zeng2018nonconvex, scutari2019distributed, sun2020improving}.  For problem \eqref{prob:original} where $\Mcal$ is the Stiefel manifold or other general compact submanifolds, decentralized (stochastic) gradient-type methods have also been recently investigated in \cite{chen2021decentralized,wang2022decentralized,deng2023decentralized,chen2023decentralizediclr}.



In decentralized optimization, all nodes are connected through a network topology in which each node communicates and averages locally with its immediate neighbors. 
When applied to large-scale machine learning applications, the primary bottleneck is communication efficiency, owing to the large number of clients involved in the network and the substantial size of machine learning models. In each iteration, the communication cost comprises two components: the number of node communications and the amount of information that needs to be transmitted during each node communication. The first component is primarily determined by the consensus step. The second component can be reduced by transmitting compressed messages, i.e., communication compression. In the following, we will discuss these two components individually.

The goal of the consensus step is to compute an average across all agents. In the Euclidean setting, consensus can be achieved with a single-step communication in each iteration. A notable difficulty in the manifold case is the direct arithmetic average $\frac{1}{n} \sum_{i=1}^n x_i$ can lie outside of the manifold. This necessitates a more sophisticated design for ensuring consensus. Based on the geodesic distance, Shah \cite{shah2017distributed} proposes a decentralized gradient tracking method, where one needs to perform an asymptotically infinite number of consensus steps in each iteration and each consensus step involves calculating the computationally expensive exponential mapping and parallel transport. In the case where $\Mcal$ is the Stiefel manifold, Wang and Liu \cite{wang2022decentralized} employ an inexact augmented Lagrangian function to dissolve the constraint and construct an equivalent unconstrained form of \eqref{prob:original} under a large enough penalty parameter. They then apply decentralized gradient methods in Euclidean space to solve the resulting problem, allowing for single-step consensus. Instead of using augmented Lagrangian-type methods, a more straightforward approach for manifold optimization is to utilize the retraction/projection operator from Riemannian geometry to ensure constraint consistency. These types of methods have been widely studied in the past decades in the manifold optimization community \cite{absil2009optimization,boumal2023introduction,hu2020brief}. Building on this foundation and leveraging the extrinsic representation of the Stiefel manifold, Chen et al. \cite{chen2021local} define a Euclidean-distance-based consensus problem and demonstrate that the Riemannian gradient iteration with multi-step consensus achieves a locally linear convergence rate under an appropriately chosen step size (less than 1).   By utilizing a powerful tool of proximal smoothness, two works \cite{deng2023decentralized,hu2023achieving} generalize the locally linear convergence results to general compact submanifolds of Euclidean space. In this context, both projected gradient descent and Riemannian gradient descent converge linearly to the consensus under the unit step size.  However, it remains unclear whether single-step consensus is permissible within the framework of projected gradient descent and Riemannian gradient descent-based algorithms.

Another approach to reducing communication costs is communication compression, which involves transmitting compressed messages between clients using compression operators. Two common compression methods are quantization and sparsification. Quantization \cite{alistarh2017qsgd,horvoth2022natural,seide20141}
converts an input from a large, potentially infinite set to a smaller set of discrete values, e.g., 1-bit quantization \cite{seide20141} or natural compression \cite{horvoth2022natural}. Conversely, sparsification \cite{wangni2018gradient,stich2018sparsified} drops a certain number of entries to obtain a sparse version for communication, such as top-$K$ compressors \cite{stich2018sparsified}. Both techniques have proven effective in achieving significant communication savings. Although communication compression has been extensively studied in Euclidean setting, see, e.g., \cite{bianchi2012convergence,shi2015extra,xu2015augmented,qu2017harnessing,di2016next,tatarenko2017non, wai2017decentralized,yuan2018exact,hong2017prox,zeng2018nonconvex, scutari2019distributed, sun2020improving}, it has not been considered in decentralized optimization on manifolds. 


\subsection{Contributions}
The goal of this paper is to address the communication bottleneck. 
Our contributions are summarized as follows: 
\begin{itemize}
    \item \textbf{Proving the applicability of single-step consensus.} Due to the nonconvexity of the manifold, the multi-step consensus is required in projection/retraction based algorithm analysis. Surprisingly, by carefully elaborating the asymptotic 1-Lipschitz continuity of the projection, we show that starting with a refined neighborhood, all the iterates generated by the projected gradient descent or the Riemannian gradient descent with the unit step size stay in the neighborhood even if the single-step consensus is adopted. Subsequently, linear convergence is well-established. This answers the question of the applicability of single-step consensus as posed in \cite{chen2021decentralized,chen2021local,deng2023decentralized,hu2023achieving}. 
    
    \item \textbf{A commnuication-efficient decentralized gradient algorithm.}
    In conjunction with single-step consensus, we investigate communication compression for both the consensus update and gradient tracking to design a communication-efficient algorithm for solving \eqref{prob:original}. This approach can be viewed as a combination of the decentralized projected Riemannian gradient tracking method \cite{deng2023decentralized} and communication compression \cite{tang2018communication}. In addition to its low per-iteration communication cost, our proposed algorithm converges to a $\epsilon$-stationary point with an $\mathcal{O}(\epsilon^{-1})$ iteration complexity, matching the best-known iteration complexity of decentralized Riemannian and Euclidean gradient methods. Consequently, the overall communication cost to achieve the same accuracy as existing algorithms for \eqref{prob:original} is significantly reduced. Numerical experiments demonstrate that the proposed method achieves the same accuracy with only 50\% or less of the communication cost of state-of-the-art methods. This is the first distributed algorithm incorporating communication compression for decentralized manifold optimization.
    
\end{itemize}


\subsection{Notation.}
For the compact submanifold $\Mcal$ of $\mathbb{R}^{d \times r}$, we always take the Euclidean metric $\iprod{\cdot}{\cdot}$ as the Riemannian metric. We use $\|\cdot \|$ to denote the Euclidean norm. We denote the $n$-fold Cartesian product of $\Mcal$ as $\Mcal^n = \Mcal \times \cdots \times \Mcal$.
For any $x\in \Mcal$, the tangent space and normal space of $\Mcal$ at $x$ are denoted by $T_x\Mcal$ and $N_x\Mcal$, respectively. For a differentiable function $h: \R^{d\times r} \rightarrow \R$, we denote its Euclidean gradient by $\nabla h(x)$ and its Riemannian gradient by $\grad h(x)$. For a positive integer $n$, define $[n] =\{1, \dots, n\}$. Let $\mathbf{1}_n\in \mathbb{R}^n$ be a vector where all entries are equal to 1. Define $J := \frac{1}{n}\mathbf{1}_n\mathbf{1}_n^{\top}$.   Unless otherwise explicitly defined, we now provide explanations for all lowercase variables used in this paper. Take $x$ as an example, we denote $x_i$ as a local variable at $i$-th agent; $\hat{x} = \frac{1}{n}\sum_{i=1}^nx_i$ is the Euclidean average. Moreover, we use the following bold notations: $$\bx := [x_1^\top, \dots, x_n^\top]^\top \in \R^{(nd) \times r},~~\hat{\bx} := [\hat{x}^\top, \dots, \hat{x}^\top]^\top \in \R^{(nd) \times r},$$ where $\bx$ denotes the collection of all local variables $x_i$ and $\hat{\bx}$ is $n$ copies of $\hat{x}$. When applied to the iterative process, in $k$-th iteration, we use $x_{i,k}$ to denote a local variable at $i$-th agent and $\hat{x}_k = \frac{1}{n}\sum_{i=1}^nx_{k,i}$. Similarly, we also  denote 
$$\bx_k := [x_{1,k}^\top, \dots, x_{n,k}^\top]^\top \in \R^{(nd) \times r},~~~ \hat{\bx}_k = [\hat{x}_k^\top, \dots, \hat{x}_k^\top]^\top \in \R^{(nd) \times r}.$$ 
Other lowercase variables can also be denoted similarly as $x$.  Define the function $f(\bx): = \frac{1}{n}\sum_{i=1}^n f_i(x_i)$. Let $\bW := W \otimes I_d \in \R^{nd \times nd}$, where $\otimes$ denotes the Kronecker product. 
\section{Preliminary}
In this section, we review some basic concepts of the decentralized optimization on manifolds. 
\subsection{Compact submanifold and smoothness of the projection operator}
Embedded submanifolds of Euclidean space, as described in \cite[Section 3.3]{absil2009optimization}, have a topology that coincides with the subspace topology of Euclidean space. We focus on compact smooth embedded submanifolds of Euclidean space, referring to them as compact smooth submanifolds. Examples of such manifolds include the (generalized) Stiefel manifold, the oblique manifold, and the symplectic manifold.  

To design and analyze the decentralized algorithms for optimization on compact smooth submanifolds, it is observed in \cite{deng2023decentralized} that
the smoothness in certain regimes of the projection mapping of $\Mcal$ plays a crucial role. We begin with the concept of proximal smoothness. The distance and the nearest-point projection of a point $y \in \mathbb{R}^{d\times r}$ onto $\Mcal$ are defined as follows:
 $$
 \text{dist}(x,\Mcal) : = \inf_{y\in \Mcal} \| y - x\|, ~~ \text{and}~~ \Pcal_{\mathcal{M}}(x) : = \arg\min_{y\in \mathcal{M}} \| y - x\|,
 $$
 respectively. For any real number $R >0$, we define the $R$-tube around $\mathcal{M}$ as the set:
 $$
 U_{\mathcal{M}}(R): = \{x:~   \text{dist}(x,\mathcal{M}) < R\}.
 $$
A closed set $\mathcal{M}$ is said to be $R$-proximally smooth if the projection $\Pcal_{\mathcal{M}}(x)$ is a singleton whenever $\text{dist}(x,\mathcal{M}) < R$. Following \cite{clarke1995proximal}, an $R$-proximally smooth set $\mathcal{M}$ satisfies: 
\begin{itemize}
    \item[(i)] For any real $\gamma\in (0,R)$, the estimate holds:
    \be \label{eq:lip-proj}
    \left\| \Pcal_{\mathcal{M}} (x) -\Pcal_{\mathcal{M}} (y)\right\| \leq \frac{R}{R-\gamma}\|x - y\|,~~ \forall x,y \in \bar{U}_{\mathcal{M}}(\gamma),
    \ee
    where $\bar{U}_{\Mcal}(\gamma):=\{x: {\rm dist}(x, \Mcal) \leq \gamma \}$. In particular, $\Pcal_{\Mcal}$ is asymptotic 1-Lipschitz as $\gamma \rightarrow 0$. 
    \item[(ii)] For any point $x\in \mathcal{M}$ and a normal vector $v\in N_x\Mcal$, the following inequality holds for all $y\in\Mcal$:
    \be \label{proximally-0}
    \iprod{v}{y-x} \leq \frac{\|v\|}{2R}\|y-x\|^2.
    \ee
\end{itemize}
It has been shown that any compact $C^2$-submanifold is proximally smooth \cite{clarke1995proximal,balashov2021gradient,davis2020stochastic}. For example, the Stiefel manifold is a $1$-proximally smooth set \cite{balashov2021gradient}. As demonstrated in Section \ref{sec:linear-consensus}, the asymptotic 1-Lipschitz continuity of $\Pcal_{\Mcal}$ is crucial for establishing linear convergence under single-step consensus. {Throughout this paper, we assume that the manifold $\Mcal$ in problem \eqref{prob:original} is $R$-proximally smooth.}

Note that proximal smoothness characterizes only the Lipschitz continuity, rather than the differentiability or higher-order smoothness, of $\Pcal_{\Mcal}$. Due to the smoothness structure of $\Mcal$, it is further shown in \cite[Lemma]{foote1984regularity} that $\Pcal_{\Mcal}$ is smooth within $U_{R}(\Mcal)$. Based on this smoothness, the following lemma on the projection operator holds.
\begin{lemma}{\cite[Lemma 3]{deng2023decentralized}}
\label{lemma-project}
Given an $R$-proximally smooth compact submanifold $\Mcal$, for any $x \in\Mcal, u\in \{u\in \mathbb{R}^{d\times r}:\|u\|\leq \frac{R}{2}\}$, 
there exists a constant $Q > 0$ such that
\be\label{eq:projec-second-order1}
\| \Pcal_{\Mcal}(x + u)  - x - \Pcal_{T_x\Mcal}(u) \| \leq Q\|u\|^2.
\ee
\end{lemma}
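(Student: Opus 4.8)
The plan is to exploit the cited regularity result of Foote, namely that $\Pcal_\Mcal$ is $C^2$ (indeed smooth) on the open tube $U_\Mcal(R)$, together with the classical identification of its differential at points of $\Mcal$. First I would fix $x\in\Mcal$ and $u$ with $\|u\|\le R/2$ and note that the whole segment $\{x+tu:t\in[0,1]\}$ lies in $\bar U_\Mcal(R/2)$, since $\mathrm{dist}(x+tu,\Mcal)\le t\|u\|\le R/2<R$. The set $\bar U_\Mcal(R/2)$ is compact (bounded because $\Mcal$ is compact, closed because it is a sublevel set of the continuous function $\mathrm{dist}(\cdot,\Mcal)$) and is contained in $U_\Mcal(R)$, so $M_2:=\sup_{y\in\bar U_\Mcal(R/2)}\|D^2\Pcal_\Mcal(y)\|<\infty$, and $M_2$ does not depend on $x$.

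Next I would compute $D\Pcal_\Mcal(x)$ for $x\in\Mcal$ and show it equals the orthogonal projector $\Pcal_{T_x\Mcal}$. For $\xi\in T_x\Mcal$, choosing a smooth curve $\gamma$ in $\Mcal$ with $\gamma(0)=x$ and $\gamma'(0)=\xi$ and differentiating the identity $\Pcal_\Mcal\circ\gamma=\gamma$ at $0$ gives $D\Pcal_\Mcal(x)[\xi]=\xi$. For $\nu\in N_x\Mcal$, the proximal-smoothness estimate \eqref{proximally-0} yields $\|x+t\nu-y\|^2\ge(1-|t|\|\nu\|/R)\|x-y\|^2+t^2\|\nu\|^2$ for all $y\in\Mcal$, so for $|t|$ small the unique nearest point of $\Mcal$ to $x+t\nu$ is $x$, i.e.\ $\Pcal_\Mcal(x+t\nu)\equiv x$; differentiating at $t=0$ gives $D\Pcal_\Mcal(x)[\nu]=0$. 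Since $\Rbb^{d\times r}=T_x\Mcal\oplus N_x\Mcal$ orthogonally and $D\Pcal_\Mcal(x)$ is linear, this forces $D\Pcal_\Mcal(x)=\Pcal_{T_x\Mcal}$.

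Finally I would apply the second-order Taylor expansion of $t\mapsto\Pcal_\Mcal(x+tu)$ on $[0,1]$: using $\Pcal_\Mcal(x)=x$ and $D\Pcal_\Mcal(x)[u]=\Pcal_{T_x\Mcal}(u)$ from the previous step, and bounding the remainder by $\tfrac12 M_2\|u\|^2$ via the first step, I obtain $\|\Pcal_\Mcal(x+u)-x-\Pcal_{T_x\Mcal}(u)\|\le\tfrac12 M_2\|u\|^2$, so the claim holds with $Q:=\tfrac12 M_2$ (or any larger constant), uniformly over $x\in\Mcal$.

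The step I expect to be the main obstacle is the differential computation $D\Pcal_\Mcal(x)=\Pcal_{T_x\Mcal}$: it requires treating tangent and normal directions separately and invoking proximal smoothness (through \eqref{proximally-0}) to know that $\Pcal_\Mcal$ is locally constant along normal rays emanating from $\Mcal$. A secondary point to handle with care is the uniformity of $Q$ in $x$, which is secured by compactness of $\bar U_\Mcal(R/2)$ and continuity of $D^2\Pcal_\Mcal$ there. If one prefers to avoid second derivatives altogether, the same bound follows from the first-order expansion with integral remainder $\Pcal_\Mcal(x+u)-\Pcal_\Mcal(x)-D\Pcal_\Mcal(x)[u]=\int_0^1\big(D\Pcal_\Mcal(x+tu)-D\Pcal_\Mcal(x)\big)[u]\,\mathrm{d}t$, once one notes that $D\Pcal_\Mcal$ is Lipschitz on the compact set $\bar U_\Mcal(R/2)$.
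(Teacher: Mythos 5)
Your proposal is correct and follows exactly the route the paper indicates: it does not prove this lemma itself but cites \cite[Lemma 3]{deng2023decentralized} and remarks that the estimate rests on Foote's result that $\Pcal_{\Mcal}$ is smooth on $U_{\Mcal}(R)$, which is precisely what you flesh out via $D\Pcal_{\Mcal}(x)=\Pcal_{T_x\Mcal}$ (using \eqref{proximally-0} for the normal directions), compactness of $\bar{U}_{\Mcal}(R/2)$, and a second-order Taylor bound. No gaps; the argument and the uniformity of $Q$ in $x$ are handled correctly.
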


\subsection{Stationary point}
Let $x_1,\cdots,x_n\in \Mcal$ represent the local copies of each agent. 
Let $\Pcal_{\Mcal}$ be the orthogonal projection onto $\Mcal$. Note that for $\{x_i\}_{i=1}^n \subset \Mcal$,
\[ {\rm argmin}_{y\in\Mcal}\sum_{i=1}^n \|y - x_i\|^2 = \Pcal_{\Mcal}(\hat{x}). \]
Any element $\bar{x}$ in $\Pcal_{\Mcal}(\hat{x})$ is the induced arithmetic mean of $\{x_i\}_{i=1}^n$ on $\Mcal$ \cite{sarlette2009consensus}.
With a slight abuse of notation, we denote $f(z) : = \frac{1}{n}\sum_{i=1}^n f_i(z)$. The $\epsilon$-stationary point of problem \eqref{prob:original} is defined as follows.
\begin{definition} \label{def:station}
The set of points $\{x_1,x_2,\cdots,x_n\} \subset \Mcal$ is called an $\epsilon$-stationary point of \eqref{prob:original} if there exists a $\bar{x} \in \Pcal_{\Mcal}(\hat{x})$ such that
\[ \frac{1}{n}\sum_{i=1}^n \| x_i - \bar{x}\|^2 \leq \epsilon \quad {\rm and} \quad \|\grad f(\bar{x})\|^2 \leq \epsilon. \]
\end{definition}
In the following development, we always assure that $\hat{x} \in \bar{U}_{\Mcal}(\gamma)$. Consequently, $\Pcal_{\Mcal}(\hat{x})$ is a singleton and we have $\bar{x} = \Pcal_{\Mcal}(\hat{x})$. 

\subsection{Communication compression}
Communication efficiency has been a challenge in fast decentralized optimization. To address this, many studies design convergent communication-compressed algorithms, where the local clients only communicated a small amount of quantized information with their neighbors, see \cite{tang2018communication,tang2019deepsqueeze,koloskova2019decentralized,koloskova2019decentralized2,singh2021squarm,zhao2022beer}. 

Now, we give the definition of commonly used compression operators \cite{tang2018communication}.  
\begin{definition} \label{def:compress}
    We say that a mapping $\mathcal{C}: \mathbb{R}^p \rightarrow \mathbb{R}^p$ is a contractive compression operator if for some constant $0<\alpha \leq 1$ it holds that for any $\bx \in \mathbb{R}^p$
    \be \label{eq:compression} \|\mathcal{C}(\mathbf{x})-\mathbf{x}\|^2 \leq(1-\alpha)\|\mathbf{x}\|^2. \ee
\end{definition}

Note that $\alpha = 1$ corresponds to no compression.  Examples of such compression operators include gsgd\_$b$ \cite{alistarh2017qsgd} and top\_$k$ compression \cite{alistarh2018convergence,stich2018sparsified}. 

\section{Convergence of single-step consensus}  \label{sec:linear-consensus}
To achieve the stationarity as defined in Definition \ref{def:station}, the literature \cite{chen2021decentralized,deng2023decentralized} suggest considering the following consensus problem over $\Mcal$: 
\be \label{prob:consensus}
\min_{\bx} \phi(\bx): = \frac{1}{4} \sum_{i=1}^n \sum_{j=1}^n W_{ij}^t\|x_i - x_j\|^2,~ \text{s.t.}~x_i \in \Mcal, i\in [n].
\ee
 The gradient of $\phi(\bx)$ is $\nabla \phi(\bx): = [\nabla \phi_1(\bx)^\top, \nabla \phi_2(\bx)^\top, \cdots, \nabla \phi_n (\bx)^\top ]^\top = (I_{nd} - \bW) \bx$, where $\nabla \phi_i(\bx): = x_i - \sum_{j=1}^n W_{ij}x_{j}, i \in [n]$. It can be seen that $\nabla \phi(\bx) = 0$ if and only if $\bx = \bW \bx$.

To ensure the equivalence between $\bx = \bW \bx$ and $x_1 = \dots = x_n$, it is necessary to impose assumptions on $W$. 
Denote by the undirected agent network $G:=\{ \mathcal{V}, \mathcal{E}\}$, where $\mathcal{V}=\{1,2,\ldots, n\}$ is the set of all agents and $\mathcal{E}$ is the set of edges. Let $W$ be the mixing matrix associated with $G$. Then $W_{ij} = W_{ji}$ and $W_{ij} > 0$ if an edge $(i,j) \in \mathcal{E}$ and otherwise $W_{ij} = 0$. The following assumption on $W$ is often used in the literature \cite{shi2015extra,zeng2018nonconvex,chen2021decentralized}. 
\begin{assumption} \label{assum-w}
     We assume that $W$ is doubly stochastic, i.e., (i) $W=W^{\top}$; (ii) $W_{i j} \geq 0$ and $1>W_{i i}>0$ for all $i, j$; (iii) Eigenvalues of $W$ lie in $(-1,1]$. The second largest singular value $\sigma_2$ of $W$ lies in $\sigma_2 \in[0,1)$.
\end{assumption}

As presented in \cite{sun2020improving,chen2021decentralized,deng2023decentralized}, the key to designing and analyzing the decentralized algorithms for \eqref{prob:original} is to establish the linear convergence of projected/Riemannian gradient methods for solving \eqref{prob:consensus}. However, since problem \eqref{prob:consensus} is nonconvex, such linear convergence usually relies on the more restrictive condition that $\sigma_2(W) \leq \frac{1}{2\sqrt{n}}$. For a $W$ satisfying Assumption \ref{assum-w}, the authors in \cite{chen2021decentralized,deng2023decentralized} use $W^t$ with $t \geq \log_{\sigma_2} \frac{1}{2\sqrt{n}}$ in the construction of \eqref{prob:consensus} instead of $W$ to meet this requirement. They consider the following consensus problem:
$$
\min_{\bx} \phi^t(\bx): = \frac{1}{4} \sum_{i=1}^n \sum_{j=1}^n W_{ij}^t\|x_i - x_j\|^2,~ \text{s.t.}~x_i \in \Mcal, i\in [n].
$$
In this case, the gradient $\nabla \phi_i^t(\bx) = x_i - \sum_{j=1}^n W^t_{ij}x_j$ needs $t$-step consensus, or simply multi-step consensus. It is unclear whether single-step consensus, i.e., $t=1$, can yield the linear convergence result. We will provide a positive answer to it in the remaining parts of this section.

\subsection{Linear convergence of projected gradient descent}
The projected gradient method with step size $\gamma \in (0, 1]$ for solving \eqref{prob:consensus} is given by
\be \label{eq:consensus-pg-iter}
x_{i,k+1} = \Pcal_{\Mcal}\left( x_{i,k} - \gamma \nabla \phi_i(\bx_k) \right) = \Pcal_{\Mcal}\left((1-\gamma) x_{i,k} + \gamma \sum_{j=1}^n W_{ij} x_{j,k} \right),\quad i\in [n].
\ee
We show that the linear convergence still holds in the single-step consensus setting. The main technique utilized is asymptotic 1-Lipschitz continuity of $\Pcal_{\Mcal}$ of compact submanifold $\Mcal$. Without loss of generality, we assume that $\Mcal$ is $R$-proximally smooth. 

\subsubsection{Staying in neighborhood}
The multi-step consensus in \cite{chen2021decentralized,deng2023decentralized} is used to ensure that the iterations stay in a neighborhood where the restricted secant inequality or the Lipschitz continuity holds. Instead of \eqref{eq:consensus-pg-iter}, we consider the projected gradient descent-based decentralized algorithm for \eqref{prob:original} with update 
\be \label{eq:pgalg} x_{i,k+1} = \Pcal_{\Mcal}\left((1-\gamma) x_{i,k} + \gamma \sum_{j=1}^n W_{ij}x_{j,k} + \eta d_{i,k} \right),  \ee
where $d_{i,k}$ is a descent direction, e.g., $d_{i,k} = -\grad f_i(x_{i,k})$, and $\eta > 0$ is a step size. {When $d_{i,k} = 0$, it reduces to \eqref{eq:consensus-pg-iter}.}

To remove the dependence on multi-step consensus, we need a dedicated neighborhood. Here, we use 
\begin{equation}\label{def:N-neiborhood}
    \mathcal{N}(\delta):=\{ \bx\in \Mcal^n: \|\bx - \bar{\bx} \| \leq \delta \},
\end{equation}
where $\delta > 0$ will be specified later. The following lemma shows that
if $\bx_0 \in \Ncal(\delta)$, all subsequent iterates generated by \eqref{eq:pgalg} satisfy $\bx_{k} \in \Ncal(\delta)$ under certain conditions.
\begin{lemma} \label{lem:stay-neighborhood-pgd}
Let $\{x_{i,k}\}$ be generated by the scheme \eqref{eq:pgalg}. Suppose that Assumption \ref{assum-w} holds,  $\bx_0 \in \Ncal(\delta)$ with $\delta < \min\{R\gamma(1 - \sigma_2), \frac{R}{4} \} $, and $\|\eta \bd_{k}\| \leq \min\{ \frac{R}{4}, \frac{R\gamma(1-\sigma_2) - \delta}{2(R-\delta)}\delta\}$. Then, it holds
\be \label{eq:stay} \bx_k \in \mathcal{N}(\delta), \; \forall k \geq 0. \ee
\end{lemma}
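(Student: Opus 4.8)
The plan is to prove the statement by induction on $k$, using the asymptotic 1-Lipschitz estimate \eqref{eq:lip-proj} together with a bookkeeping of how far the argument of $\Pcal_\Mcal$ in \eqref{eq:pgalg} sits from the manifold.

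\textbf{Setup and the inductive step.} Assume $\bx_k \in \Ncal(\delta)$, i.e. $\|\bx_k - \bar\bx_k\| \le \delta$ where $\bar\bx_k$ is $n$ copies of $\bar x_k = \Pcal_\Mcal(\hat x_k)$. Write the argument of the projection in \eqref{eq:pgalg} in stacked form as $\by_k := (1-\gamma)\bx_k + \gamma \bW \bx_k + \eta \bd_k$, so $\bx_{k+1} = \Pcal_\Mcal(\by_k)$ coordinatewise. The first thing I would check is that $\by_k$ lies in a tube $\bar U_\Mcal(\gamma')$ for a suitable $\gamma' < R$, so that \eqref{eq:lip-proj} applies: since each $x_{i,k}\in\Mcal$, the convex-combination-plus-$\bW$ part $(1-\gamma)\bx_k + \gamma\bW\bx_k$ is a point whose distance to $\Mcal$ is controlled by $\|\bx_k - \bar\bx_k\|\le\delta$ (each row is an average of manifold points near $\bar x_k$), and adding $\eta\bd_k$ with $\|\eta\bd_k\|\le R/4$ keeps us inside, say, $\bar U_\Mcal(R/2)$. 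This is where the hypotheses $\delta < R/4$ and $\|\eta\bd_k\|\le R/4$ are consumed.

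\textbf{The contraction estimate.} The heart of the argument is to bound $\|\bx_{k+1} - \bar\bx_{k+1}\|$. The natural route: compare $\bx_{k+1} = \Pcal_\Mcal(\by_k)$ with the point $\Pcal_\Mcal(\hat\by_k)$, where $\hat\by_k$ is $n$ copies of the row-average of $\by_k$. A short computation shows that row-averaging annihilates the $(I-\bW)$-type part (because $W$ is doubly stochastic, $\1_n^\top W = \1_n^\top$), so the average of $\by_k$ is $\hat x_k + \eta\hat d_k$, and $\Pcal_\Mcal$ of its $n$-fold copy is a single manifold point close to $\bar x_k = \Pcal_\Mcal(\hat x_k)$. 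Using \eqref{eq:lip-proj} with ratio $R/(R-\gamma')$ on the pair $(\by_k, \hat\by_k)$ and then the fact that the "consensus residual'' of $\by_k$ — the deviation of $\by_k$ from its row-average — equals that of $(1-\gamma)\bx_k + \gamma\bW\bx_k$ plus $\eta\bd_k$ minus its average, I would get
\[
\|\bx_{k+1} - \bar\bx_{k+1}\| \le \frac{R}{R-\gamma'}\Big( \big\|\big((1-\gamma)I + \gamma\bW\big)(\bx_k - \bar\bx_k)\big\| + 2\|\eta\bd_k\|\Big) + (\text{lower order}).
\]
The operator $(1-\gamma)I + \gamma W$, restricted to the subspace orthogonal to $\1_n$, has norm at most $1 - \gamma(1-\sigma_2)$ by Assumption \ref{assum-w}; since $\bx_k - \bar\bx_k$ need not be exactly in that subspace I would first project $\bx_k$ onto $\hat\bx_k$ and absorb the error, or equivalently use $\|((1-\gamma)I+\gamma\bW)(\bx_k - \hat\bx_k)\| \le (1-\gamma(1-\sigma_2))\|\bx_k - \hat\bx_k\|$ together with $\|\bx_k - \hat\bx_k\| \le \|\bx_k - \bar\bx_k\|$ (since $\bar\bx_k$ has the consensus form and $\hat\bx_k$ is the best consensus approximation). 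Plugging in $\|\eta\bd_k\| \le \frac{R\gamma(1-\sigma_2)-\delta}{2(R-\delta)}\delta$ and choosing $\gamma'$ so that $\frac{R}{R-\gamma'}$ is close enough to $1$ (this is exactly the "asymptotic 1-Lipschitz'' leverage — we may take $\gamma'$ proportional to $\delta$), the bracket becomes at most $\frac{R-\delta}{R}\delta$, and the whole right-hand side collapses to $\le \delta$, closing the induction.

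\textbf{Main obstacle.} The delicate point is the interplay between the Lipschitz constant $\frac{R}{R-\gamma'} > 1$ and the contraction factor $1 - \gamma(1-\sigma_2) < 1$: one must show the tube radius $\gamma'$ actually reachable by the iterates is small enough (of order $\delta$) that the product stays below $1$ with room to spare for the $\eta\bd_k$ perturbation — this is why the hypothesis reads $\delta < R\gamma(1-\sigma_2)$ and not merely $\delta < R$. Concretely I expect $\gamma' = \delta + \|\eta\bd_k\|$ or a similar explicit expression, and the bulk of the work is verifying the resulting chain of inequalities. I would also need Lemma \ref{lemma-project} (the second-order expansion of $\Pcal_\Mcal$) to control the "lower order'' quadratic terms arising because $\bx_k - \bar\bx_k$ is not infinitesimal; bounding these by $Q\delta^2$ and noting $\delta$ is small keeps them subordinate. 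Handling these quadratic remainders cleanly, rather than the linear algebra, is the part most likely to require care.
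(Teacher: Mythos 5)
Your overall strategy is the paper's: induction, place the argument of the projection in a small tube around $\Mcal$, apply the near-1-Lipschitz bound \eqref{eq:lip-proj}, contract the consensus residual through the spectral gap of $(1-\gamma)I+\gamma W$, and use the optimality of $\hat\bx_k$ (to pass from $\|\bx_k-\hat\bx_k\|$ to $\|\bx_k-\bar\bx_k\|$) and of $\bar\bx_{k+1}$ (to change the anchor point). Two points of divergence deserve comment. First, your worry about quadratic remainders and Lemma \ref{lemma-project} is a red herring: the paper's proof is exact at the level of Lipschitz estimates and contains no second-order terms at all; the quadratic expansion is only needed later, for the retraction-based scheme \eqref{eq:rgalg}, where the retraction must be compared to the projection.

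Second, and more substantively, your bookkeeping does not close the induction with the constants as stated. You apply a single Lipschitz factor $\tfrac{R}{R-\gamma'}$ (with $\gamma'=\delta+\|\eta\bd_k\|$) to the entire residual, including the $2\|\eta\bd_k\|$ piece, which yields
$\tfrac{R}{R-\gamma'}\bigl[(1-\gamma(1-\sigma_2))\delta+2\|\eta\bd_k\|\bigr]$; plugging in the hypothesized bound $\|\eta\bd_k\|\le\tfrac{R\gamma(1-\sigma_2)-\delta}{2(R-\delta)}\delta$ leaves this strictly larger than $\delta$, since the prefactor exceeds $1$. The paper avoids this by first splitting off the perturbation with the triangle inequality: the term $\|\Pcal_{\Mcal^n}(\cdot+\eta\bd_k)-\Pcal_{\Mcal^n}(\cdot)\|$ is bounded by $2\eta\|\bd_k\|$ using the crude $2$-Lipschitz constant on $\bar U_{\Mcal}(\tfrac{R}{2})$, while the sharp factor $\tfrac{R}{R-\delta}$ is reserved for the consensus residual alone. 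The constants then telescope exactly:
$2\eta\|\bd_k\|+\tfrac{R-R\gamma(1-\sigma_2)}{R-\delta}\delta\le\tfrac{(R\gamma(1-\sigma_2)-\delta)+(R-R\gamma(1-\sigma_2))}{R-\delta}\delta=\delta$. Your argument is repairable either by adopting this split or by shrinking the admissible $\|\eta\bd_k\|$ by a constant factor, but as written the final chain of inequalities fails.
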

\begin{proof}
    Let us prove it by induction. Assume that $\bx_k \in \Ncal(\delta)$. It follows from the definition of $\| \cdot \|$ that $\max_i \|x_{i,k} - \bar{x}_k\| \leq \|\bx_k - \bar{\bx}_k\| \leq \delta$. Then, by the convexity of $\|\cdot \|$, we have
    \[ \| (1-\gamma)x_{i,k} + \gamma \sum_{j=1}^n W_{ij} x_{j,k}  - \bar{x}_k \| \leq \delta, \; \forall i \in [n], \]
    and $\| \hat{x}_k - \bar{x}_k\| \leq \delta$. Furthermore, for all $i \in [n]$,
    \[ \begin{aligned}
        & {\rm dist}\left((1-\gamma)x_{i,k} + \gamma \sum_{j=1}^n W_{ij} x_{j,k} + \eta d_{i,k}, \Mcal \right) \\
        \leq & {\rm dist}\left((1-\gamma)x_{i,k} + \gamma \sum_{j=1}^n W_{ij} x_{j,k}, \Mcal \right) + \|\eta d_{i,k}\|  \leq  \delta + \frac{R}{4} \leq \frac{R}{2}.
    \end{aligned}
     \]
    Then by the $\frac{R}{R-\delta}$- and $2$-Lipschitz continuity of $\Pcal_{\Mcal}$ over $\bar{U}_{\Mcal}(\delta)$ and $\bar{U}_{\Mcal}(\frac{R}{2})$, it holds 
    \be\label{eq:ineq-stay} \begin{aligned}
        \| \bx_{k+1} - \bar{\bx}_{k+1} \| \leq & \| \bx_{k+1} - \bar{\bx}_k \| = \|\Pcal_{\Mcal^n}( (1-\gamma) \bx_k +  \gamma\bW \bx_k + \eta \bd_k) - \Pcal_{\Mcal}(\hat{\bx}_k) \| \\
        \leq & \|\Pcal_{\Mcal^n}( (1-\gamma) \bx_k +  \gamma\bW \bx_k + \eta \bd_k) - \Pcal_{\Mcal^n}( (1-\gamma) \bx_k +  \gamma\bW \bx_k) \| \\
        & + \|\Pcal_{\Mcal^n}( (1-\gamma) \bx_k +  \gamma\bW \bx_k) - \Pcal_{\Mcal^n}(\hat{\bx}_k) \|  \\
        \leq & 2\eta\|\bd_k\| + \frac{R}{R- \delta} \| ((1-\gamma)I + \gamma \bW) \bx_k - \hat{\bx}_k\| \\
        \leq & 2\eta \|\bd_k\| + \frac{R (1 -\gamma+ \gamma \sigma_2)}{R- \delta} \|\bx_k - \hbx_k\| \\
        \leq & 2\eta \|\bd_k\| + \frac{R (1 -\gamma+ \gamma \sigma_2)}{R- \delta} \|\bx_k - \bar{\bx}_k\|,
    \end{aligned} 
    \ee
    where $\Pcal_{\Mcal^n} = \Pcal_{\Mcal} \times \cdots \times \Pcal_{\Mcal}$, the last inequality follows from the optimality of $\hbx_k$.
    Together with the bound $\delta< R\gamma (1-\sigma_2)$ and $\eta\|\bd_k\| \leq \frac{R\gamma(1-\sigma_2) -\delta}{2(R-\delta)}\delta$, \eqref{eq:ineq-stay} implies $\bx_{k+1} \in \mathcal{N}(\delta)$. We complete the proof. 
\end{proof}

Compared with the analysis of \cite[Lemma 3.2]{deng2023decentralized}, the above lemma allows for the use of the single-step consensus. The key advancements include linking the neighborhood size $\delta$ with the second smallest eigenvalue of the mixing matrix, $\sigma_2$, and demonstrating the asymptotic 1-Lipschitz continuity of the projection $\mathcal{P}_{\mathcal{M}}$.

\subsubsection{Linear convergence}
Having established that all iterates remain within the desired neighborhood, we now proceed to demonstrate the linear convergence of the projected gradient descent as outlined in equation \eqref{eq:consensus-pg-iter}.
\begin{theorem} \label{thm:lin-con-pgd}
Let $\{x_{i,k}\}$ be generated by the scheme \eqref{eq:consensus-pg-iter}. Suppose that Assumption \ref{assum-w} holds and $\bx_0 \in \Ncal(\delta)$ with $\delta < \min\{R\gamma(1 - \sigma_2),\frac{R}{4} \} $. Then, $\{\bx_k\}$ converges to the consensus set linearly with rate $\rho_1 :=\frac{R -R\gamma (1 - \sigma_2)}{R - \delta} < 1$. More specifically, we have for any $k \geq 0$,
\[ \|\bx_{k+1} - \bar{\bx}_{k+1}\| \leq \rho_1 \|\bx_k - \bar{\bx}_k\|.
\]
\end{theorem}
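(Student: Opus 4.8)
The key observation is that the scheme \eqref{eq:consensus-pg-iter} is exactly the special case $\bd_k \equiv 0$ of the general update \eqref{eq:pgalg}, so the entire argument is already contained, essentially verbatim, in the proof of Lemma~\ref{lem:stay-neighborhood-pgd}. The plan is therefore: first invoke Lemma~\ref{lem:stay-neighborhood-pgd} with $\eta\bd_k = 0$ — the smallness condition $\|\eta\bd_k\| \leq \min\{R/4, \ldots\}$ holds trivially — so that under $\bx_0 \in \Ncal(\delta)$ with $\delta < \min\{R\gamma(1-\sigma_2), R/4\}$ we get $\bx_k \in \Ncal(\delta)$ for all $k \geq 0$. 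This is what keeps every iterate, as well as the auxiliary points $(1-\gamma)x_{i,k} + \gamma\sum_j W_{ij}x_{j,k}$ (by convexity of $\mathrm{dist}(\cdot,\Mcal)$, exactly as in that proof) and $\hat{x}_k$, inside the tube $\bar U_\Mcal(\delta)$, which is the hypothesis needed to apply the sharp $\frac{R}{R-\delta}$-Lipschitz estimate \eqref{eq:lip-proj}.

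Next I would reproduce the chain \eqref{eq:ineq-stay} with the $\eta\bd_k$ term deleted. Start from $\|\bx_{k+1} - \bar{\bx}_{k+1}\| \leq \|\bx_{k+1} - \bar{\bx}_k\|$, which holds because $\bar{x}_{k+1} = \Pcal_\Mcal(\hat{x}_{k+1})$ is the minimizer of $y \mapsto \sum_i \|y - x_{i,k+1}\|^2$ over $\Mcal$ while $\bar{x}_k \in \Mcal$. Writing $\bx_{k+1} = \Pcal_{\Mcal^n}\big(((1-\gamma)I + \gamma\bW)\bx_k\big)$ and $\bar{\bx}_k = \Pcal_{\Mcal^n}(\hat{\bx}_k)$, with both arguments in $\bar U_\Mcal(\delta)$, the componentwise $\frac{R}{R-\delta}$-Lipschitz continuity of $\Pcal_\Mcal$ gives
\[
\|\bx_{k+1} - \bar{\bx}_k\| \leq \frac{R}{R-\delta}\,\big\|((1-\gamma)I + \gamma\bW)\bx_k - \hat{\bx}_k\big\|.
\]

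The remaining ingredient is the mixing-matrix spectral estimate. Since $W$ is doubly stochastic, $\bW\hat{\bx}_k = \hat{\bx}_k$, hence $((1-\gamma)I + \gamma\bW)\bx_k - \hat{\bx}_k = ((1-\gamma)I + \gamma\bW)(\bx_k - \hat{\bx}_k)$; moreover $\bx_k - \hat{\bx}_k = \big((I_n - J)\otimes I_d\big)\bx_k$ lies in the subspace on which $\bW = W\otimes I_d$ acts with operator norm at most $\sigma_2$, so the triangle inequality yields $\|((1-\gamma)I + \gamma\bW)(\bx_k - \hat{\bx}_k)\| \leq (1-\gamma+\gamma\sigma_2)\|\bx_k - \hat{\bx}_k\|$. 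Finally $\|\bx_k - \hat{\bx}_k\| \leq \|\bx_k - \bar{\bx}_k\|$ because $\hat{x}_k$ is the unconstrained minimizer of $y \mapsto \sum_i \|y - x_{i,k}\|^2$ over $\R^{d\times r}$ and $\bar{x}_k \in \R^{d\times r}$. Chaining the four bounds gives $\|\bx_{k+1} - \bar{\bx}_{k+1}\| \leq \frac{R(1-\gamma+\gamma\sigma_2)}{R-\delta}\|\bx_k - \bar{\bx}_k\| = \rho_1\|\bx_k - \bar{\bx}_k\|$, and $\rho_1 < 1$ is equivalent to $R(1-\gamma+\gamma\sigma_2) < R - \delta$, i.e. $\delta < R\gamma(1-\sigma_2)$, which is part of the hypothesis.

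I do not expect a genuine obstacle here, since the work is already done in Lemma~\ref{lem:stay-neighborhood-pgd}; the only two points that need care are (a) verifying that both arguments of $\Pcal_{\Mcal^n}$ lie in $\bar U_\Mcal(\delta)$ so that the \emph{sharp} Lipschitz constant $\frac{R}{R-\delta}$ is available — this asymptotic 1-Lipschitz behaviour is precisely what forces $\rho_1 \to 1 - \gamma(1-\sigma_2) < 1$ as $\delta \to 0$, in contrast with the cruder constants that necessitate multi-step consensus in \cite{deng2023decentralized}; and (b) correctly isolating the invariant subspace $\mathrm{range}\big((I_n-J)\otimes I_d\big)$ on which $\bW$ contracts by the spectral gap factor $\sigma_2$, so that the network connectivity enters the rate.
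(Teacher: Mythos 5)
Your proposal is correct and follows essentially the same route as the paper: invoke Lemma~\ref{lem:stay-neighborhood-pgd} with $\bd_k \equiv 0$ to keep all iterates in $\Ncal(\delta)$, then chain the optimality of $\bar{\bx}_{k+1}$, the $\frac{R}{R-\delta}$-Lipschitz bound \eqref{eq:lip-proj}, the spectral contraction of $(1-\gamma)I+\gamma\bW$ on the consensus-orthogonal subspace, and the optimality of $\hat{\bx}_k$ to obtain the rate $\rho_1$. Your added justifications (why $0$ satisfies the smallness condition, and why $\bW$ contracts by $\sigma_2$ on $\mathrm{range}((I_n-J)\otimes I_d)$) are consistent with what the paper leaves implicit.
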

\begin{proof}
    It follows from Lemma \ref{lem:stay-neighborhood-pgd} that $\bx_{k} \in \Ncal(\delta)$ for all $i \in [n]$. 
    Then, using $\frac{R}{R-\delta}$-Lipschitz continuity of $\Pcal_{\Mcal}(\cdot)$ over $\bar{U}_{\Mcal}(\delta)$, we have
    \[ \begin{aligned}
        \| \bx_{k+1} - \bar{\bx}_{k+1} \| & \leq \| \bx_{k+1} - \bar{\bx}_k \| = \|\Pcal_{\Mcal^n}( (1-\gamma) \bx_k +  \gamma\bW \bx_k) - \Pcal_{\Mcal^n}(\hat{\bx}_k) \| \\
        & \leq \frac{R}{R- \delta} \| ((1-\gamma)I + \gamma \bW) \bx_k - \hat{\bx}_k\| \\
        & \leq \frac{R (1 -\gamma+ \gamma \sigma_2)}{R- \delta} \|\bx_k - \hat{\bx}_k\| \\
        & \leq \rho_1 \|\bx_k - \bar{\bx}_k\|. 
    \end{aligned} \]
    This completes the proof. 
\end{proof}

\subsection{Linear convergence of Riemannian gradient descent} 
Riemannian gradient descent is another popular method for achieving consensus, where multi-step consensus is necessary to ensure linear convergence \cite{chen2021local,hu2023achieving}.
Now, we proceed to demonstrate the linear convergence of the single-step consensus-based Riemannian gradient descent. Specifically, with step size $\gamma \in (0,1]$, the iterative scheme is 
\be \label{eq:consensus-it-rgd} x_{i,k+1} = \mathcal{R}_{x_{i,k}}\left(- \gamma \grad \phi_i(\bx_k) \right) =\mathcal{R}_{x_{i,k}}\left(- \gamma \Pcal_{T_{x_{i,k} \Mcal}}\left(x_{i,k} - \sum_{j=1}^n W_{ij}x_{j,k} \right)\right), \ee
where $\mathcal{R}_{x_{i,k}}$ denote a retraction operator at $x_{i,k}\in \Mcal$. Correspondingly, the Riemannian gradient descent-based decentralized algorithm for solving \eqref{prob:original} is 
\be \label{eq:rgalg} x_{i,k+1} = \mathcal{R}_{x_{i,k}}\left(- \gamma \Pcal_{T_{x_{i,k} \Mcal}}\left(x_{i,k} - \sum_{j=1}^n W_{ij}x_{j,k} \right) + \eta d_{i,k}\right), \ee
where $d_{i,k} \in T_{x_{i,k}}\Mcal$.  To analyze the linear convergence of the Riemannian gradient method, the following connection between the retraction and the projection is crucial.
\begin{lemma}{\cite[Lemma 3.1]{hu2023achieving}} \label{lem:lip-proj1}
    Let $\mathcal{R}$ be any retraction on $\Mcal$. For any $x \in \mathcal{M}$ and $u \in \mathbb{R}^{d \times r}$, there exists a positive constant $M_1$ such that
    \be \label{eq:lip-proj1}
    \left\| \mathcal{P}_{\mathcal{M}}(x+u)- \Rcal_x(\mathcal{P}_{T_x \mathcal{M}}(u))\right\| \leq M_1 \|u\|^2. 
    \ee
\end{lemma}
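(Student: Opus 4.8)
The plan is to split $\Pcal_{\Mcal}(x+u) - \Rcal_x(\Pcal_{T_x\Mcal}(u))$ through the common first-order point $x + \Pcal_{T_x\Mcal}(u)$ and control the two resulting errors separately, each being of order $\|u\|^2$. The first error is handled directly by Lemma \ref{lemma-project}: for every $x\in\Mcal$ and every $u$ with $\|u\|\le R/2$,
\[
\bigl\|\Pcal_{\Mcal}(x+u) - x - \Pcal_{T_x\Mcal}(u)\bigr\| \le Q\|u\|^2 .
\]

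For the second error I would use the defining properties of a retraction. Writing $v := \Pcal_{T_x\Mcal}(u)\in T_x\Mcal$, we have $\Rcal_x(0) = x$ and $\mathrm{D}\Rcal_x(0) = \mathrm{id}_{T_x\Mcal}$, so Taylor's theorem applied to the smooth map $v\mapsto\Rcal_x(v)$ at $v=0$ gives a constant $\beta>0$ and a radius $\bar r>0$ with
\[
\bigl\|\Rcal_x(v) - x - v\bigr\| \le \beta\|v\|^2 \qquad\text{for all } x\in\Mcal,\ v\in T_x\Mcal,\ \|v\|\le \bar r .
\]
The point where compactness of $\Mcal$ is essential is the \emph{uniformity} of $\beta$ and $\bar r$ in $x$: the map $(x,v)\mapsto\Rcal_x(v)$ is smooth on a neighborhood of the zero section of $T\Mcal$, hence its second-order derivatives are bounded and its domain contains a uniform tube $\{\|v\|\le\bar r\}$ over the compact base $\Mcal$.

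Since $\|v\| = \|\Pcal_{T_x\Mcal}(u)\| \le \|u\|$, both displays apply whenever $\|u\| \le \bar\delta := \min\{R/2,\bar r\}$, and the triangle inequality gives
\[
\bigl\|\Pcal_{\Mcal}(x+u) - \Rcal_x(\Pcal_{T_x\Mcal}(u))\bigr\|
\le \bigl\|\Pcal_{\Mcal}(x+u) - x - v\bigr\| + \bigl\|x + v - \Rcal_x(v)\bigr\|
\le (Q+\beta)\|u\|^2 .
\]
For $\|u\| > \bar\delta$ one argues crudely by compactness: $\Pcal_{\Mcal}(x+u)$ and $\Rcal_x(\Pcal_{T_x\Mcal}(u))$ both lie on $\Mcal$ (for a globally defined retraction, as is standard on compact manifolds), so their difference is at most $\mathrm{diam}(\Mcal) \le (\mathrm{diam}(\Mcal)/\bar\delta^2)\|u\|^2$. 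Taking $M_1 := \max\{\,Q+\beta,\ \mathrm{diam}(\Mcal)/\bar\delta^2\,\}$ finishes the proof.

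\textbf{Main obstacle.} The only ingredient that is not pure bookkeeping is the uniform second-order retraction estimate: pointwise it is immediate from $\Rcal_x(0)=x$, $\mathrm{D}\Rcal_x(0)=\mathrm{id}$ and Taylor's theorem, but extracting a single $\beta$ and a single admissible radius valid for every $x\in\Mcal$ relies on the joint smoothness of $(x,v)\mapsto\Rcal_x(v)$ together with compactness of $\Mcal$. An alternative route avoids invoking Lemma \ref{lemma-project} and instead notes directly that $u\mapsto\Pcal_{\Mcal}(x+u)$ and $u\mapsto\Rcal_x(\Pcal_{T_x\Mcal}(u))$ are both $C^2$ near $u=0$, share the value $x$ there, and share the first-order term $\Pcal_{T_x\Mcal}(u)$ (because $\mathrm{D}\Pcal_{\Mcal}(x)=\Pcal_{T_x\Mcal}$ and $\mathrm{D}\Rcal_x(0)=\mathrm{id}$), so their difference is $O(\|u\|^2)$ by a second-order Taylor expansion; the same compactness argument then upgrades this to a uniform constant.
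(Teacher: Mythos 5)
The paper does not actually prove this lemma: it imports it verbatim from \cite[Lemma 3.1]{hu2023achieving}, so there is no in-paper proof to compare against. Judged on its own, your argument is correct and is the standard way to establish this estimate. The decomposition through the common first-order point $x+\Pcal_{T_x\Mcal}(u)$ is exactly right: the projection half is Lemma \ref{lemma-project}, and the retraction half, $\|\Rcal_x(v)-x-v\|\leq \beta\|v\|^2$ uniformly over $x\in\Mcal$ and $\|v\|\leq\bar r$, follows from $\Rcal_x(0)=x$, $\mathrm{D}\Rcal_x(0)=\mathrm{id}_{T_x\Mcal}$, joint smoothness of $(x,v)\mapsto\Rcal_x(v)$ on a neighborhood of the zero section, and compactness of the base --- you correctly identify uniformity as the only nontrivial point. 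The nonexpansiveness $\|\Pcal_{T_x\Mcal}(u)\|\leq\|u\|$ lets you chain the two bounds, and the crude $\mathrm{diam}(\Mcal)/\bar\delta^2$ bound disposes of large $\|u\|$ (note that for $\|u\|\leq R/2$ the projection is single-valued by proximal smoothness, and for larger $\|u\|$ any selection of $\Pcal_{\Mcal}$ still lies on $\Mcal$, so the diameter bound is insensitive to multivaluedness). The one hypothesis you are implicitly adding --- that $\Rcal_x$ is defined at $\Pcal_{T_x\Mcal}(u)$ for arbitrarily large $u$ --- is also implicit in the lemma statement itself, so this is not a gap in your argument but an ambiguity inherited from the source. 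Your sketched alternative (both maps are $C^2$ in $u$ near $0$ with the same zeroth- and first-order data, since $\mathrm{D}\Pcal_{\Mcal}(x)=\Pcal_{T_x\Mcal}$) is equally valid and slightly slicker, though it still needs the same compactness step for uniformity.
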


\subsubsection{Staying in neighborhood}
By carefully setting the neighborhood size, we can also show that the iterates generated by \eqref{eq:rgalg} stay in the neighborhood. Our proof is again based on the asymptotical 1-Lipschitz continuity. This requires us to connect the retraction $R$ with the projection $\Pcal_{\Mcal}$. Fortunately, the inequality in Lemma \ref{lem:lip-proj1} is sufficient. Specifically, we have the following lemma on staying in the neighborhood. 
\begin{lemma} \label{lem:rg-stay-neighborhood}
    Let $\{x_{i,k}\}$ be the sequence generated by \eqref{eq:rgalg}.
    Suppose that Assumption \ref{assum-w} holds, $x_0 \in \mathcal{N}(\hat{\delta})$ with 
    $$\hat{\delta} \in \left[0, \min\left\{\frac{8M_1 \gamma^2 R +1 - \sqrt{(8M_1 \gamma^2 R -1)^2 + 32 M_1\gamma^2 R (1 -\gamma+ \gamma \sigma_2)}}{16M_1 \gamma^2}, \frac{R}{4} \right\} \right)$$ with $M_1$ from Lemma \ref{lem:lip-proj1}, and $\|\eta \bd_k\| \leq \min  \left \{ \frac{1}{M_1 \gamma^2}, \frac{\hat{\delta}}{4(1- \rho_2)} \right \}$ with $\rho_2 := \frac{R(1 -\gamma+ \gamma \sigma_2)}{R- \hat{\delta}} + 8M_1 \gamma^2 \hat{\delta} < 1$. Then, it holds that for any $k \geq 0$, 
    \[ \bx_k \in \Ncal(\hat{\delta}). \] 
\end{lemma}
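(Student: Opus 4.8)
The plan is to mimic the structure of the proof of Lemma~\ref{lem:stay-neighborhood-pgd}, but replace the projection-only estimates with the retraction-to-projection comparison of Lemma~\ref{lem:lip-proj1}. I would argue by induction on $k$: assuming $\bx_k \in \Ncal(\hat\delta)$, I want to show $\|\bx_{k+1} - \bar\bx_{k+1}\| \le \hat\delta$. As in the earlier proof, I first record that $\max_i \|x_{i,k} - \bar x_k\| \le \hat\delta$, that by convexity of $\|\cdot\|$ the averaged point $(1-\gamma)x_{i,k} + \gamma\sum_j W_{ij}x_{j,k}$ also lies within $\hat\delta$ of $\bar x_k$, and that $\|\hat x_k - \bar x_k\| \le \hat\delta$; these ensure all relevant points sit in the tubes $\bar U_{\Mcal}(\hat\delta)$ and $\bar U_{\Mcal}(R/2)$ where the Lipschitz bounds \eqref{eq:lip-proj} (with $\gamma \leftarrow \hat\delta$, giving constant $R/(R-\hat\delta)$) and the $2$-Lipschitz bound apply. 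I should also check that the argument fed to the retraction has norm at most $R/2$ (or whatever bound makes Lemma~\ref{lem:lip-proj1} usable), which is where the hypotheses $\hat\delta \le R/4$ and $\|\eta\bd_k\| \le 1/(M_1\gamma^2)$ enter.

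The core chain of inequalities would be: bound $\|\bx_{k+1} - \bar\bx_{k+1}\|$ by $\|\bx_{k+1} - \bar\bx_k\|$ (optimality of $\bar\bx_k = \Pcal_{\Mcal^n}(\hat\bx_k)$), then split $\bx_{k+1} = \Rcal_{\bx_k}(-\gamma\Pcal_{T_{\bx_k}\Mcal}(\bx_k - \bW\bx_k) + \eta\bd_k)$ against $\Pcal_{\Mcal^n}\big((1-\gamma)\bx_k + \gamma\bW\bx_k + \eta\bd_k\big)$ using Lemma~\ref{lem:lip-proj1} applied blockwise — this produces the quadratic error term $M_1\|(1-\gamma-\gamma W)\text{-perturbation} + \eta d_{i,k}\|^2$, which I would crudely bound by something like $M_1(\gamma^2\|\bx_k - \hat\bx_k\|^2 + \text{cross terms})$ and ultimately by $8M_1\gamma^2\hat\delta \cdot \|\bx_k - \bar\bx_k\|$ plus a term absorbed into $\|\eta\bd_k\|$; note here one must be a little careful, since $\Pcal_{T_{x_{i,k}}\Mcal}(x_{i,k} - \sum_j W_{ij}x_{j,k})$ is the tangential part of $x_{i,k} - \sum_j W_{ij}x_{j,k}$, and Lemma~\ref{lem:lip-proj1} is stated precisely so that $\Rcal_x(\Pcal_{T_x\Mcal}(u))$ tracks $\Pcal_{\Mcal}(x+u)$ with $u = -\gamma(x_{i,k} - \sum_j W_{ij}x_{j,k}) $, so the bookkeeping with the $(1-\gamma)$ factor needs the identity $(1-\gamma)x_{i,k} + \gamma\sum_j W_{ij}x_{j,k} = x_{i,k} - \gamma(x_{i,k} - \sum_j W_{ij}x_{j,k})$. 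After this, the remaining piece $\|\Pcal_{\Mcal^n}((1-\gamma)\bx_k + \gamma\bW\bx_k + \eta\bd_k) - \Pcal_{\Mcal^n}(\hat\bx_k)\|$ is handled exactly as in \eqref{eq:ineq-stay}: triangle inequality, the $2$-Lipschitz bound to peel off $\eta\bd_k$, the $R/(R-\hat\delta)$-Lipschitz bound, and $\|((1-\gamma)I + \gamma\bW)\bx_k - \hat\bx_k\| \le (1-\gamma+\gamma\sigma_2)\|\bx_k - \hat\bx_k\| \le (1-\gamma+\gamma\sigma_2)\|\bx_k - \bar\bx_k\|$.

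Collecting terms, I expect to arrive at $\|\bx_{k+1} - \bar\bx_{k+1}\| \le \rho_2\|\bx_k - \bar\bx_k\| + (\text{terms in } \|\eta\bd_k\|) \le \rho_2\hat\delta + (1-\rho_2)\hat\delta = \hat\delta$, using $\|\eta\bd_k\| \le \hat\delta/(4(1-\rho_2))$ and the slack from the $2$-Lipschitz constant; this closes the induction. The condition $\rho_2 < 1$ and the specific upper bound on $\hat\delta$ are exactly what make $\rho_2 := \frac{R(1-\gamma+\gamma\sigma_2)}{R-\hat\delta} + 8M_1\gamma^2\hat\delta$ strictly less than $1$ — indeed, the messy expression $\frac{8M_1\gamma^2 R + 1 - \sqrt{(8M_1\gamma^2 R - 1)^2 + 32 M_1\gamma^2 R(1-\gamma+\gamma\sigma_2)}}{16M_1\gamma^2}$ is precisely the smaller root of the quadratic (in $\hat\delta$) obtained by setting $\frac{R(1-\gamma+\gamma\sigma_2)}{R-\hat\delta} + 8M_1\gamma^2\hat\delta = 1$, so I would verify that and note that the root is positive because $1-\gamma+\gamma\sigma_2 < 1$.

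The main obstacle I anticipate is the correct and tight handling of the quadratic remainder from Lemma~\ref{lem:lip-proj1}: one must show the error $\sum_i M_1\|\gamma(x_{i,k} - \sum_j W_{ij}x_{j,k}) - \eta d_{i,k}\|^2$ (summed in the product norm) is controlled by $8M_1\gamma^2\hat\delta\|\bx_k-\bar\bx_k\|$ up to the $\eta\bd_k$ slack — this uses $\|\gamma(I-\bW)\bx_k\| = \gamma\|(I-\bW)(\bx_k - \bar\bx_k)\| \le 2\gamma\|\bx_k - \bar\bx_k\| \le 2\gamma\hat\delta$ so that squaring gives a factor $\le 4\gamma^2\hat\delta$ times $\|\bx_k-\bar\bx_k\|$, and the cross term with $\eta\bd_k$ plus the inductive bound $\|\eta\bd_k\| \le 1/(M_1\gamma^2)$ inflates the constant from $4$ to $8$; getting all these constants to line up with the stated $\rho_2$ and $\hat\delta$ bound is the delicate part, but it is routine algebra rather than a conceptual difficulty.
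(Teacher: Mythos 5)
Your proposal follows essentially the same route as the paper's proof: induction on $k$, the identity $(1-\gamma)x_{i,k}+\gamma\sum_j W_{ij}x_{j,k}=x_{i,k}-\gamma(x_{i,k}-\sum_j W_{ij}x_{j,k})$ to invoke Lemma~\ref{lem:lip-proj1} blockwise, the same splitting into the retraction-vs-projection quadratic remainder plus the projection chain from \eqref{eq:ineq-stay}, the bound $\|\bx_k-\bW\bx_k\|^2\le 4\|\bx_k-\hat\bx_k\|^2$ to produce the $8M_1\gamma^2\hat\delta$ term in $\rho_2$, and absorption of the $\eta\bd_k$ terms using the stated slack. Your reading of the $\hat\delta$ bound as the smaller root of $\rho_2=1$ is also consistent with how the paper uses it, so the proposal is correct and matches the paper's argument.
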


\begin{proof}[Proof of Lemma \ref{lem:rg-stay-neighborhood}]
    It follows from Lemma \ref{lem:lip-proj1} that there exists a constant $M_1 > 0$ such that for any $x\in \Mcal$ and $u \in \R^{d\times r}$
    \[ \|\mathcal{R}_{x}\left(\Pcal_{T_x \Mcal}(u) \right) - \Pcal_{\Mcal}(x + u)\| \leq M_1 \|u\|^2.  \]
    Similar to Lemma \ref{lem:stay-neighborhood-pgd}, we use proof by induction. Assume $\bx_k \in \mathcal{N}(\hat{\delta})$. Then, we have $(1-\gamma)x_{i,k} + \gamma \sum_{j=1}^n W_{ij} x_{j,k}  \in \bar{U}_{\Mcal}( \hat{\delta}), \; i = 1, \ldots, n$, $\hat{x}_k \in  \bar{U}_{\Mcal}( \hat{\delta})$, and $(1 - \gamma)x_{i,k} + \gamma \sum_{j=1}^n W_{ij}x_{i,k} \in \bar{U}_{\Mcal}(\frac{R}{2})$.  
    Using the $\frac{R}{R-\hat{\delta}}$- and 2-Lipschitz continuity of $\Pcal_{\Mcal}(\cdot)$ over $\bar{U}_{\Mcal}(\hat{\delta})$ and $\bar{U}_{\Mcal}(\frac{R}{2})$, we have
    \be \label{eq:rgd-stay} \begin{aligned}
        & \| \bx_{k+1} - \bar{\bx}_{k+1} \|  \leq  \| \bx_{k+1} - \bar{\bx}_{k} \| \\
        \leq & \| \Pcal_{\Mcal}( (1- \gamma)\bx_k + \gamma\bW \bx_k + \eta \bd_k ) - \bar{\bx}_k \| + M_1 \gamma^2 \| \bx_k - \bW \bx_k - \eta \bd_k \|^2 \\
        \leq & \|\Pcal_{\Mcal^n}( (1-\gamma) \bx_k +  \gamma\bW \bx_k + \eta \bd_k) - \Pcal_{\Mcal^n}( (1-\gamma) \bx_k +  \gamma\bW \bx_k) \| \\
        & + \|\Pcal_{\Mcal^n}( (1-\gamma) \bx_k +  \gamma\bW \bx_k) - \Pcal_{\Mcal^n}(\hat{\bx}_k) \| + 2M_1\gamma^2 (\|\bx_k - \bW \bx_k \|^2 + \eta^2 \|\bd_k\|^2 )  \\ 
        \leq & 2 \eta \|\bd_k\| +  \frac{R}{R - \hat{\delta}} \| \bW x_k - \hat{\bx}_k\| + 2M_1 \gamma^2 (4\|\bx_k - \hat{\bx}_k\|^2 + \eta^2 \|\bd_k\|^2) \\
        \leq & \left( \frac{R(1 -\gamma+ \gamma \sigma_2)}{R- \hat{\delta}} + 8M_1 \gamma^2 \hat{\delta} \right) \| \bx_k - \bar{\bx}_k\| + 2(M_1 \gamma^2\eta\|\bd_k\| + 1)\eta \|\bd_k\| \\
        \leq & \rho_2 \delta + 4\eta \|\bd_k\| \leq \delta,
    \end{aligned} \ee
    where we use $\|\bx_k - \bW \bx_k\|^2 = \|\bx_k - \hbx_k + \hbx_k - \bW\bx_k\|^2 \leq 2 \|\bx_k -\hbx_k\|^2 + 2\|\bW \bx_k - \hbx_k\|^2 \leq 4 \|\bx_k - \hbx_k\|^2$ in the third inequality. This completes the proof. 
\end{proof} 

\subsubsection{Linear convergence}
We now show that the single-step consensus-based Riemannian gradient update \eqref{eq:consensus-it-rgd} also converges linearly. 
\begin{theorem} \label{thm:lin-con-rgd}
    Let $\{x_{i,k}\}$ be the sequence generated by \eqref{eq:consensus-it-rgd}. 
    Suppose that Assumption \ref{assum-w} holds and $x_0 \in \mathcal{N}(\hat{\delta})$ with $\hat{\delta}$ defined in Lemma \ref{lem:rg-stay-neighborhood}. Then, $\{\bx_k\}$ converges to the consensus set linearly with rate $\rho_2 < 1$ defined in Lemma \ref{lem:rg-stay-neighborhood}. More specifically, we have for any $k \geq 0$,
    \[ \| \bx_{k+1} - \bar{\bx}_{k+1} \| \leq \rho_2 \|\bx_k - \bar{\bx}_k\|.   \]
\end{theorem}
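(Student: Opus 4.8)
The plan is to mimic the structure of the proof of Theorem~\ref{thm:lin-con-pgd}, using Lemma~\ref{lem:rg-stay-neighborhood} (with the descent direction set to $d_{i,k}=0$, i.e. $\eta\bd_k=0$) to guarantee that every iterate $\bx_k$ of the pure consensus scheme \eqref{eq:consensus-it-rgd} lies in $\Ncal(\hat\delta)$, and then to re-run the chain of inequalities in \eqref{eq:rgd-stay} with $\eta\bd_k=0$. First I would invoke Lemma~\ref{lem:rg-stay-neighborhood}: since the hypotheses of Theorem~\ref{thm:lin-con-rgd} are exactly those of the lemma in the special case $\bd_k\equiv 0$ (the condition $\|\eta\bd_k\|\le\min\{1/(M_1\gamma^2),\hat\delta/(4(1-\rho_2))\}$ is vacuous), we get $\bx_k\in\Ncal(\hat\delta)$ for all $k\ge 0$, hence $\Pcal_{\Mcal}(\hat\bx_k)$ is a singleton equal to $\bar\bx_k$, and $\max_i\|x_{i,k}-\bar x_k\|\le\|\bx_k-\bar\bx_k\|\le\hat\delta$.

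Next I would carry out the one-step contraction estimate. Using the optimality of $\bar\bx_{k+1}$ one has $\|\bx_{k+1}-\bar\bx_{k+1}\|\le\|\bx_{k+1}-\bar\bx_k\|$. Then apply Lemma~\ref{lem:lip-proj1} componentwise (with $x=x_{i,k}$ and $u=-\gamma(x_{i,k}-\sum_j W_{ij}x_{j,k})$, noting $\|u\|\le 2\gamma\hat\delta\le R/2$ so the estimate is in force) to replace the retraction step by the nearest-point projection up to an $M_1\gamma^2\|\bx_k-\bW\bx_k\|^2$ error:
\[
\|\bx_{k+1}-\bar\bx_k\|\le\bigl\|\Pcal_{\Mcal^n}\bigl((1-\gamma)\bx_k+\gamma\bW\bx_k\bigr)-\Pcal_{\Mcal^n}(\hat\bx_k)\bigr\|+M_1\gamma^2\|\bx_k-\bW\bx_k\|^2 .
\]
For the first term I use the $\frac{R}{R-\hat\delta}$-Lipschitz continuity of $\Pcal_{\Mcal}$ over $\bar U_{\Mcal}(\hat\delta)$ together with $\|((1-\gamma)I+\gamma\bW)\bx_k-\hat\bx_k\|\le(1-\gamma+\gamma\sigma_2)\|\bx_k-\hat\bx_k\|\le(1-\gamma+\gamma\sigma_2)\|\bx_k-\bar\bx_k\|$ (the spectral bound on $\bW$ restricted to the complement of the consensus subspace, plus optimality of $\hat\bx_k$). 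For the second term I use $\|\bx_k-\bW\bx_k\|^2\le 4\|\bx_k-\hat\bx_k\|^2\le 4\hat\delta\|\bx_k-\bar\bx_k\|$, exactly as in the proof of Lemma~\ref{lem:rg-stay-neighborhood}. Combining, $\|\bx_{k+1}-\bar\bx_{k+1}\|\le\bigl(\frac{R(1-\gamma+\gamma\sigma_2)}{R-\hat\delta}+8M_1\gamma^2\hat\delta\bigr)\|\bx_k-\bar\bx_k\|=\rho_2\|\bx_k-\bar\bx_k\|$, and $\rho_2<1$ by the choice of $\hat\delta$ in Lemma~\ref{lem:rg-stay-neighborhood}.

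The genuinely delicate point — already absorbed into Lemma~\ref{lem:rg-stay-neighborhood}, so here it is only a matter of bookkeeping — is verifying $\rho_2<1$, i.e. that the quadratic retraction-vs-projection error $8M_1\gamma^2\hat\delta$ does not swamp the gain $1-\frac{R(1-\gamma+\gamma\sigma_2)}{R-\hat\delta}$ coming from the mixing step; this is precisely what forces the upper bound on $\hat\delta$ through the root of the quadratic in the statement of Lemma~\ref{lem:rg-stay-neighborhood}. I would simply cite that lemma for both the "staying in the neighborhood" claim and for $\rho_2<1$, so the proof of the theorem itself reduces to the short display above. The only thing to be careful about is ensuring the arguments of $\Pcal_{\Mcal}$ and of Lemma~\ref{lem:lip-proj1} indeed lie in the tubes $\bar U_{\Mcal}(\hat\delta)$ and $\{\|u\|\le R/2\}$ where the respective Lipschitz/quadratic bounds hold, which follows from convexity of $\|\cdot\|$ and $\hat\delta\le R/4$ exactly as in Lemma~\ref{lem:rg-stay-neighborhood}.
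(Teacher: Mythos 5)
Your proposal is correct and follows essentially the same route as the paper: invoke Lemma \ref{lem:rg-stay-neighborhood} with $\bd_k\equiv 0$ to keep all iterates in $\Ncal(\hat\delta)$, then redo the chain of inequalities in \eqref{eq:rgd-stay} with $\eta=0$, using Lemma \ref{lem:lip-proj1} to trade the retraction for the projection up to an $M_1\gamma^2\|\bx_k-\bW\bx_k\|^2$ error and absorbing that error into $\rho_2$ via $\|\bx_k-\bW\bx_k\|^2\le 4\hat\delta\|\bx_k-\bar\bx_k\|$. The paper's proof is exactly this, stated more tersely.
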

\begin{proof}
    It follows from Lemma \ref{lem:rg-stay-neighborhood} that $\bx_k \in \Ncal(\hat{\delta})$ for any $k \geq 0$. 
    By setting $\eta =0$ in \eqref{eq:rgd-stay}, we have
    \[ \begin{aligned}
        \| \bx_{k+1} - \bar{\bx}_{k+1} \| & \leq \| \bx_{k+1} - \bar{\bx}_{k} \|  \leq \| \Pcal_{\Mcal^n}( (1- \gamma)\bx_k + \gamma\bW \bx_k ) - \bar{\bx}_k \| + M_1 \gamma^2 \| \bx_k - \bW \bx_k \|^2 \\
        & \leq \frac{R}{R - \hat{\delta}} \| \bW x_k - \hat{\bx}_k\| + 4M_1 \gamma^2 \|\bx_k - \hat{\bx}_k\|^2 
         \leq \rho_2 \| \bx_k - \bar{\bx}_k\|.
    \end{aligned} \]
    This completes the proof. 
\end{proof}


Until now, we have shown that both projection/retraction gradient descent methods have local linear convergence under single-step consensus. In what follows, we focus on the projected gradient descent-based algorithm update \eqref{eq:pgalg}. 


\section{A communication-efficient decentralized gradient method}
In this section, we will use the single-step consensus and communication compression to design a communication-efficient decentralized gradient algorithm for solving \eqref{prob:original}. We first give some notations. Denote $\hat{g}_k = \frac{1}{n}\sum_{i=1}^n \grad f_i(x_{i,k})$ and $\hbg_k = [\hat{g}_k^\top,\dots,\hat{g}_k^\top]^\top$ is $n$ copies of $\hat{g}_k$.

\subsection{Decentralized optimization on manifolds with communication compression}

Motivated by the literature \cite{tang2018communication,tang2019deepsqueeze,koloskova2019decentralized,koloskova2019decentralized2,singh2021squarm,zhao2022beer} for the Euclidean setting, we now present our communication-compression algorithms for decentralized manifold optimization in Algorithm \ref{alg}. The compression is not directly applied to the iterates and gradients. Instead, we introduce the auxiliary variables $\tbx_k :=[\tilde{x}_{1,k}^\top, \dots, \tilde{x}_{n,k}^\top]^\top$ and $\tbd_k := [\tilde{d}_{1,k}^\top, \dots, \tilde{d}_{n,k}^\top]^\top$, with $\tilde{x}_{i,k}$ and $\tilde{d}_{i,k}$ stored in the $i$-th agent.  Here, we compress the differences $\bx_{k+1} - \tbx_k$ and $\bd_{k+1} - \tbd_k$. Then, the local agents communicate these compressed versions $\bq_{k+1}$ and $\bp_{k+1}$ with their neighbors. Therefore, we obtain the local averages $\bs_{k+1}$ and $\bu_{k+1}$ through compressed communications, which are used to update the iterations and tracked gradients. We note that by the initialization strategy $\bs_0 = \bW\bx_0, \bu_0 = \bW \bd_0$, it holds that 
\begin{equation}\label{eq:rela-s-x} \bs_{k+1} = \bW \tbx_{k+1} {\rm \;\; and \;\;} \bu_{k+1} = \bW \tbd_{k+1}. \end{equation}
To tackle the manifold constraint, we use the  Riemannian gradient $\grad f(\bx_k)$ and the projection operator $\Pcal_{\Mcal}$ for the feasibility and efficiency of the algorithm.    

We see that each local agent needs to store six matrices, $x_{i,k}, \hat{x}_{i,k}, s_{i,k}$ and $d_{i,k}, \hat{d}_{i,k}, u_{i,k}$. This is a bit more than the usual uncompressed decentralized manifold optimization algorithms \cite{deng2023decentralized}, where only two matrices $x_{i,k}, d_{i,k}$ are stored. This amount of storage increase is neglectable compared with significantly reduced communications.

\begin{algorithm}[htbp] \label{alg}
\caption{Decentralized projected Riemannian gradient tracking method with communication compression (DPRGC) for solving \eqref{prob:original}} \label{alg:drpgd}
\begin{algorithmic}[1]
\REQUIRE  Initial point $\bx_0\in \mathcal{N}$, $\tbx_0 = \bx_0$, $\bs_0 = \bW \bx_0$, $\bd_0 = \grad f(\bx_0)$, $\tbd_0 = \bd_0$, $\bu_0 = \bW \bd_0$, step sizes $\eta > 0$ and $\gamma \in (0,1]$, set $k = 1$.
\WHILE{the stopping condition is not met}
\STATE $\bx_{k+1} = \Pcal_{\Mcal^n}(\bx_k + \gamma(\bs_k - \tbx_k) - \eta \Pcal_{T_{\bx_k} \Mcal^n}(\bd_k))$.
\STATE $\bq_{k+1} = \Ccal(\bx_{k+1} - \tbx_k)$, $\tbx_{k+1} = \tbx_{k} + \bq_{k+1}$. \hfill \Comment{Compression on the iterates}
\STATE $\bs_{k+1} = \bs_k + \bW \bq_{k+1}$. \hfill \Comment{Communication on the iterates}
\STATE $\bd_{k+1}= \bd_{k} + \gamma(\bu_k - \tbd_k) + \grad f(\bx_{k+1}) - \grad f(\bx_k)$. \hfill \Comment{Gradient tracking}
\STATE $\bp_{k+1} = \Ccal(\bd_{k+1} - \tbd_k)$, $\tbd_{k+1} = \tbd_k + \bp_{k+1}$. \hfill \Comment{Compression on the gradients}
\STATE $\bu_{k+1} = \bu_k + \bW\bp_{k+1}$. \hfill \Comment{Communication on the gradients}
\STATE Set $k=k+1$.
\ENDWHILE
\end{algorithmic}
\end{algorithm}

\subsection{Convergence analysis}
In this subsection, we demonstrate the convergence of Algorithm \ref{alg} for the compression operator in Definition \ref{def:compress}. Let us start with the assumptions on the objective function.
\begin{assumption} \label{ass:lip}
    For any $i = 1,\dots, n$, the function $f_i$ is $L_f$-smooth, i.e., for any $x, y \in \R^{d\times r}$,
    \[ \| \nabla f_i(x) - \nabla f_i(y)\| \leq L_f \|x - y\|.  \]
\end{assumption}

It has been shown in \cite{deng2023decentralized} that there exists a constant $L = \max \left\{ L_f + \frac{1}{R}L_g,  L_f + L_gL_\Pcal \right\}$ such that for any $x$
\be\label{eq:quad}
\begin{aligned}
f_i(y) \leq f_i(x) + \iprod{\grad f_i(x)}{y-x} + \frac{L}{2}\|y-x\|^2, \\
\|\grad f_i(x) - \grad f_i(y)\| \leq L \|x -y\|,
\end{aligned}
\ee
where $L_g:=\max_{x \in \Mcal} \|\nabla f(x)\|$ and $L_{\Pcal}$ is the Lipschitz constant of $\Pcal_{T_x\Mcal}$ over $x \in \Mcal$.

Now, we proceed with the convergence with the following steps: 1. characterize the compression approximation error and consensus error, 2. one-step decrease of the objective function $f$, 3. choose proper step size and establish the convergence rate of $\mathcal{O}(1/K)$. 

We first define the following quantities:
\[ \left\{ \begin{aligned}
{\rm Compression~approximation~error:~} & \Omega_1^k = \|\bx_k - \tbx_k\|^2, \;\; \Omega_2^k = \| \bd_k - \tbd_k \|^2 \\
{\rm Consensus~error:~} & \Omega_3^k = \|\bx_k - \bbx_k \|^2, \;\; \Omega_4^k = \| \bd_k - \hbd_k \|^2 \\
{\rm Gradient~norm:~} & \Omega_5^k = \|\hbd_k\|^2. 
\end{aligned} \right.
\]
 Then, we have the following recursion on these errors.
\begin{lemma} \label{lem:recur}
    Suppose that Assumption \ref{ass:lip} holds. Denote $\rho = 1- \sigma_2$ and $\hat{\rho} = 1- \rho_1$ where $\rho_1$ is defined in Theorem \ref{thm:lin-con-pgd}. For Algorithm \ref{alg}, if $\|\bx_k - \bbx_k\| \leq \delta$ with $\delta$ defined in Theorem \ref{thm:lin-con-pgd} and $2\gamma \|\tbx_k - \bx_k\| + \eta \|\bd_k\| \leq \frac{R}{4}$, it holds that
    \begin{align}
        \Omega_1^{k+1} \leq &\left(1-\frac{\alpha}{2} + \frac{96 \gamma^2}{\alpha} \right) \Omega_1^k
           + \frac{96 \gamma^2}{\alpha} \Omega_3^k 
           + \frac{24 \eta^2}{\alpha} \Omega_4^k + \frac{24 \eta^2}{\alpha} \Omega_5^k, \label{recur-1}\\
        \Omega_2^{k+1} \leq  &\frac{288 L^2\gamma^2}{\alpha} \Omega_1^k + \left(1- \frac{\alpha}{2} + \frac{24 \gamma^2}{\alpha} \right) \Omega_2^k  + \frac{288 L^2 \gamma^2}{\alpha} \Omega_3^k  + \frac{24 \gamma^2 + 72L^2 \eta^2}{\alpha} \Omega_4^k  + \frac{72 L^2 \eta^2}{\alpha} \Omega_5^k, \label{recur-2}  \\
        \Omega_3^{k+1}  \leq  &\frac{64}{\hat{\rho}}\gamma^2 \Omega_1^k + \left( 1- \frac{\hat{\rho}}{2}\right) \Omega_3^k + \frac{16}{\hat{\rho}} \eta^2 \Omega_4^k + \frac{16}{\hat{\rho}} \eta^2 \Omega_5^k, \label{recur-3} \\
        \Omega_4^{k+1} \leq & \frac{288 L^2\gamma}{\rho} \Omega_1^k +   \frac{24\gamma}{\rho} \Omega_2^k 
        +  \frac{288 L^2\gamma}{\rho} \Omega_3^k + 
        \left(1- \frac{\gamma \rho}{2} + \frac{72 L^2\eta^2}{\gamma \rho }\right) \Omega_4^k +
        \frac{72 L^2\eta^2}{\gamma \rho } \Omega_5^k. \label{recur-4}
    \end{align}
\end{lemma}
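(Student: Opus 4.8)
The plan is to derive each of the four recursions by expanding the relevant update rule from Algorithm \ref{alg}, applying the contraction property \eqref{eq:compression} of $\Ccal$, and then repeatedly using Young's inequality $\|a+b\|^2 \le (1+\beta)\|a\|^2 + (1+\beta^{-1})\|b\|^2$ with carefully chosen $\beta$ to split the cross terms. Throughout I would work with the stacked quantities $\bx_k,\tbx_k,\bd_k,\tbd_k$ and use the identity \eqref{eq:rela-s-x}, i.e. $\bs_k = \bW\tbx_k$ and $\bu_k = \bW\tbd_k$, which lets me rewrite $\gamma(\bs_k-\tbx_k) = \gamma(\bW - I)\tbx_k$ and similarly for $\bu_k$. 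The hypotheses $\|\bx_k-\bbx_k\|\le\delta$ and $2\gamma\|\tbx_k-\bx_k\| + \eta\|\bd_k\|\le R/4$ guarantee that the argument of $\Pcal_{\Mcal^n}$ in line 2 of the algorithm lies in the tube $\bar U_{\Mcal}(R/2)$, so the $2$-Lipschitz bound \eqref{eq:lip-proj} applies; combined with Lemma \ref{lemma-project} (or the asymptotic $1$-Lipschitz estimate) this controls the projection step.

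For \eqref{recur-1}: write $\bx_{k+1}-\tbx_{k+1} = (\bx_{k+1}-\tbx_k) - \bq_{k+1} = (I-\Ccal)(\bx_{k+1}-\tbx_k)$, so by \eqref{eq:compression}, $\Omega_1^{k+1}\le (1-\alpha)\|\bx_{k+1}-\tbx_k\|^2$. Then insert $\bx_{k+1}-\tbx_k = (\bx_{k+1}-\bx_k) + (\bx_k-\tbx_k)$ and use Young's inequality with parameter $\approx \alpha/2$ to get a $(1-\alpha/2)\Omega_1^k$ term plus a multiple of $\|\bx_{k+1}-\bx_k\|^2/\alpha$. Bounding $\|\bx_{k+1}-\bx_k\|$ via the $2$-Lipschitz property of $\Pcal_{\Mcal^n}$ on the tube produces $\|\gamma(\bW-I)\tbx_k - \eta\Pcal_{T}(\bd_k)\|$; splitting further with $\tbx_k = (\tbx_k-\bx_k) + (\bx_k - \bbx_k) + \bbx_k$ and using $(\bW-I)\bbx_k = 0$ together with $\|\bW - I\|\le 2$ gives the $\Omega_1^k,\Omega_3^k$ contributions, while $\|\Pcal_T(\bd_k)\|^2 \le \|\bd_k\|^2 \le 2\Omega_4^k + 2\Omega_5^k$ gives the $\Omega_4^k,\Omega_5^k$ terms; collecting constants yields the stated coefficients. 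Recursion \eqref{recur-2} is analogous but starts from $\bd_{k+1}-\tbd_{k+1} = (I-\Ccal)(\bd_{k+1}-\tbd_k)$ and expands $\bd_{k+1}-\bd_k = \gamma(\bW-I)\tbd_k + \grad f(\bx_{k+1}) - \grad f(\bx_k)$, where the gradient difference is bounded by $L\|\bx_{k+1}-\bx_k\|$ using \eqref{eq:quad}, reintroducing an $L^2\|\bx_{k+1}-\bx_k\|^2$ term to be expanded as before — this is where the $L^2$ factors enter.

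For \eqref{recur-3}: use $\bbx_{k+1} = \Pcal_{\Mcal}(\hbx_{k+1})$ and the linear-consensus machinery of Theorem \ref{thm:lin-con-pgd}: since $\bx_{k+1}$ is obtained by a projected step of the form \eqref{eq:pgalg} with descent direction absorbing $\eta\Pcal_T(\bd_k)$ and with the extra compression perturbation $\gamma(\bs_k-\tbx_k) = \gamma(\bW-I)\tbx_k$ rewritten as $\gamma(\bW-I)\bx_k + \gamma(\bW-I)(\tbx_k-\bx_k)$, the "clean" part contracts $\Omega_3^k$ by $\rho_1 = 1-\hat\rho$, and the $2$-Lipschitz bound turns the two perturbations into the $\gamma^2\Omega_1^k$, $\eta^2\Omega_4^k$, $\eta^2\Omega_5^k$ terms; squaring $\rho_1(1-\hat\rho/2)^{-1}$-style via Young's inequality with parameter $\hat\rho/2$ converts the $\rho_1^2$ contraction into $(1-\hat\rho/2)$ absorbing the lower-order $\hat\rho$. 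Recursion \eqref{recur-4} is the gradient-tracking analogue: expand $\bd_{k+1}-\hbd_{k+1}$, noting $\hbd_{k+1} = J\bd_{k+1}$ with $J$ the averaging operator so $(\bW - J)$ contracts by $\sigma_2 = 1-\rho$; the update $\bd_{k+1} = \bd_k + \gamma(\bW-I)\tbd_k + (\grad f(\bx_{k+1})-\grad f(\bx_k))$ feeds in $\Omega_2^k$ (through $\tbd_k - \bd_k$), $\Omega_1^k,\Omega_3^k$ (through the gradient difference $\le L\|\bx_{k+1}-\bx_k\|$), and the step-size-dependent $\eta^2 L^2$ terms into $\Omega_4^k,\Omega_5^k$; the factor $1/\gamma$ and $\gamma$ splitting is chosen so the contraction reads $1-\gamma\rho/2$.

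The main obstacle is bookkeeping: tracking which Young's-inequality parameter is used at each split so that the final contraction factors come out exactly as $1-\alpha/2$, $1-\hat\rho/2$, $\gamma\rho/2$, and that all the off-diagonal coefficients collapse to the clean constants $96,288,24,72,64,16$ stated. The only genuine technical subtlety beyond arithmetic is verifying that the tube condition $2\gamma\|\tbx_k-\bx_k\|+\eta\|\bd_k\|\le R/4$ (together with $\|\bx_k - \bbx_k\|\le\delta$, $\delta < R/4$) really places the projection argument in $\bar U_{\Mcal}(R/2)$, so that the $2$-Lipschitz estimate \eqref{eq:lip-proj} with $\gamma = R/2$ is legitimate; this mirrors the staying-in-neighborhood argument of Lemma \ref{lem:stay-neighborhood-pgd} and should go through the same way.
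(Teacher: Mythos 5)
Your proposal is correct and follows essentially the same route as the paper's proof: contraction of $\Ccal$ followed by Young's inequality with parameters $\alpha/2$, $\hat\rho/2$, $\gamma\rho/2$, the identities $\bs_k=\bW\tbx_k$, $\bu_k=\bW\tbd_k$, the fact that $\bW-I$ annihilates consensus vectors, the $2$-Lipschitz bound on $\Pcal_{\Mcal^n}$ in the tube, and the contraction factor $\rho_1=1-\hat\rho$ from Theorem~\ref{thm:lin-con-pgd} for the consensus recursion. The only cosmetic differences are that the paper bounds $\|\bx_{k+1}-\bx_k\|$ via $\|\Pcal_{\Mcal}(x)-y\|\le 2\|x-y\|$ for $y\in\Mcal$ rather than the tube Lipschitz estimate, and uses the exact orthogonal decomposition $\|\bd_k\|^2\le\|\bd_k-\hbd_k\|^2+\|\hbd_k\|^2$ where you allow a factor of $2$; neither affects the argument.
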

\begin{proof}
    Let us show the recursions of the errors accordingly.
    
     \textbf{Compression approximation error.} It follows from the definition of $\Ccal$ that
     \be  \label{eq:diff-comp-iter}\begin{aligned}
         \| \tbx_{k+1} - \bx_{k+1}\|^2 & = \| \tbx_k + \Ccal(\bx_{k+1} - \tbx_k) - \bx_{k+1} \|^2 \\
         & \leq (1-\alpha)\| \tbx_k - \bx_{k+1} \|^2 \\
         & \leq (1- \alpha)\left[(1+ \beta) \| \tbx_k - \bx_{k} \|^2 + (1-\alpha)(1+ \frac{1}{\beta}) \|\bx_{k+1} - \bx_k\|^2 \right]\\
         & \leq \left(1-\frac{\alpha}{2}\right)\| \tbx_k - \bx_{k} \|^2 + \frac{2}{\alpha} \|\bx_{k+1} - \bx_k\|^2,
     \end{aligned}
       \ee
       where we use Young's inequality in the second inequality and set $\beta = \frac{\alpha}{2}$ in the last inequality. To bound $\|\bx_{k+1} - \bx_k\|$, we use 
       \be \label{eq:diff-iter} \begin{aligned}
           & \| \bx_{k+1} - \bx_k \|^2 = \|\Pcal_{\Mcal^n}(\bx_k + \gamma (\bs_k - \tbx_k) - \eta \Pcal_{T_{\bx_k} \Mcal^n}(\bd_k)) - \bx_k \|^2 \\
           \leq & 4 \| \bx_k + \gamma (\bW - I) \tbx_k - \eta \Pcal_{T_{\bx_k} \Mcal^n}(\bd_k)) - \bx_k \|^2 \\
           = & 4 \| \gamma (\bW - I)(\tbx_k - \bx_k) + \gamma (\bW - I)(\bx_k - \hbx_k) + \eta \Pcal_{T_{\bx_k} \Mcal^n}(\bd_k))\|^2 \\
           \leq & 12\gamma^2 \| (\bW - I)(\tbx_k - \bx_k)  \|^2 + 12 \gamma^2 \|(\bW - I)(\bx_k - \hbx_k)\| + 12 \eta^2 \|\bd_k\|^2 \\
           \leq & 48 \gamma^2 \|\tbx_k - \bx_k \|^2 + 48 \gamma^2 \| \bx_k - \hbx_k \|^2 + 12 \eta^2 \|\bd_k - \hbd_k\|^2 + 12 \eta^2 \|\hbd_k\|^2 \\
           \leq & 48 \gamma^2 \|\tbx_k - \bx_k \|^2 + 48 \gamma^2 \| \bx_k - \bbx_k \|^2 + 12 \eta^2 \|\bd_k - \hbd_k\|^2 + 12 \eta^2 \|\hbd_k\|^2
           ,
       \end{aligned} \ee
       where the first inequality is due to the fact that $\bs_{k} = \bW \tbx_{k}$ and $\| \Pcal_{\Mcal}(x) - y\| \leq \|\Pcal_{\Mcal}(x) - x\| + \|x -y\| \leq 2\|x-y\|$ for any $x \in \R^{d\times r}, y \in \Mcal$, the second equality use $(\bW - I)\hat{\bx}_k = 0$,  the second inequality is from the Young's inequality $\|x + y\|^2 \leq (1+ \beta) \|x\|^2 + (1+ 1/\beta)\|y\|^2$ for any $\beta > 0$, the second last inequality comes from $\sigma_{\max}(W-I) \leq 2$ and $\|\bd_k\|^2 \leq \|\bd_k - \hbd_k\|^2 + \|\hbd_k\|^2$, and the last line is from definition of $\hbx_k$. Plugging \eqref{eq:diff-iter} into \eqref{eq:diff-comp-iter} yields 
       \be \label{eq:diff-comp-iter-f}
       \begin{aligned}
           \| \tbx_{k+1} - \bx_{k+1}\|^2 \leq  &  \left(1-\frac{\alpha}{2} + \frac{96 \gamma^2}{\alpha} \right)\| \tbx_k - \bx_{k} \|^2 
           + \frac{96 \gamma^2}{\alpha} \| \bx_k - \hbx_k \|^2 \\
           & + \frac{24 \eta^2}{\alpha} \|\bd_k - \hbd_k\|^2 + \frac{24 \eta^2}{\alpha} \|\hbd_k\|^2. 
       \end{aligned}
       \ee

        Let us turn to bound $\|\tbd_{k+1} - \bd_{k+1}\|$. Analogous to \eqref{eq:diff-comp-iter}, it holds
       \be \label{eq:diff-comp-grad}
       \begin{aligned}
           \| \tbd_{k+1} - \bd_{k+1} \|^2 & = \|\tbd_{k} - \Ccal(\bd_{k+1} - \tbd_k) - \bd_{k+1} \| \\
           & \leq (1- \alpha)\|\bd_{k+1} - \tbd_k\|^2 \\
           & \leq (1- \frac{\alpha}{2}) \|\tbd_k - \bd_k\|^2 + \frac{2}{\alpha}\| \bd_{k+1} - \bd_k \|^2.
       \end{aligned}
       \ee
       Similarly, we have from the update formula of $\bd_{k+1}$ that
       \be \label{eq:diff-grad} 
        \begin{aligned}
            & \| \bd_{k+1} - \bd_k \|^2 = \|\bd_k + \gamma (\bu_k - \tbd_k) + \grad f(\bx_{k+1}) - \grad f(\bx_{k}) - \bd_k \|^2 \\
           = & \| \gamma (\bW - I)(\tbd_k - \bd_k) + \gamma (\bW - I)(\bd_k - \hbd_k) + \grad f(\bx_{k+1}) - \grad f(\bx_k) \|^2 \\
           \leq & 3\gamma^2 \| (\bW - I)(\tbd_k - \bd_k)  \|^2 + 3 \gamma^2 \|\bW - I)(\bd_k - \hbd_k)\| + 3 L^2 \|\bx_{k+1} - \bx_k \|^2 \\
           \leq & 12 \gamma^2 \|\tbd_k - \bd_k \|^2 + (12 \gamma^2 + 36L^2 \eta^2) \| \bd_k - \hbd_k \|^2 + 144 L^2\gamma^2 \|\bx_k - \tbx_k\|^2 \\
           & + 144L^2 \gamma^2 \|\bx_k - \hbx_k\|^2  + 36L^2 \eta^2 \|\hbd_k\|^2, 
        \end{aligned}
       \ee
       where the first equality uses $\bu_k = \bW \tbd_k$ in \eqref{eq:rela-s-x} and $(\bW - I)\hbd_k = 0$,  the first inequality is from Young's inequality and the Lipschitz continuity of $\grad f$, and the last inequality comes from \eqref{eq:diff-iter} and $\sigma_{\max}(W- I) \leq 2$. Then, plugging \eqref{eq:diff-grad} into \eqref{eq:diff-comp-grad} gives
       \be \label{eq:diff-com-grad-f}
       \begin{aligned}
           \| \tbd_{k+1} - \bd_{k+1} \|^2 & \leq  \left(1- \frac{\alpha}{2} + \frac{24 \gamma^2}{\alpha} \right) \|\tbd_k - \bd_k\|^2  + \frac{24 \gamma^2 + 72L^2 \eta^2}{\alpha} \| \bd_k - \hbd_k \|^2 \\
           & + \frac{288 L^2\gamma^2}{\alpha} \|\bx_k - \tbx_k\|^2 + \frac{288 L^2 \gamma^2}{\alpha} \|\bx_k - \hbx_k\|^2  + \frac{72 L^2 \eta^2}{\alpha} \|\hbd_k\|^2.   
       \end{aligned}   
       \ee
       
    \textbf{Consensus error.} Due to the existence of manifold constraint, we define the manifold average $\bbx_k$ and the consensus error $\|\bx_k - \bbx_k\|$. Denote $\tbW  = (1-\gamma)I + \gamma \bW$. It follows from $\|\bx_k - \bbx_k\| \leq \delta$ and $2\gamma \|\tbx_k - \bx_k\| + \eta \|\bd_k\| \leq R/2 - \delta$ that $ \| \tbW \bx_k - \bbx_k \| \leq \delta$ and 
    \begin{equation}
    \begin{aligned}
     & \| \tbW \bx_k + \gamma (\bW - I) (\tbx_k - \bx_k) + \eta \Pcal_{T_{\bx_k} \Mcal^n}(\bd_k) - \bbx_k \| \\
      & \leq \| \tbW \bx_k - \bbx_k \| + 2\gamma \| \tbx_k - \bx_k \| + \eta \|\bd_k\|      \leq \delta.
      \end{aligned}
    \end{equation}
    This implies that $\tbW \bx_k$ and $\tbW \bx_k + \gamma (\bW - I) (\tbx_k - \bx_k) + \eta \Pcal_{T_{\bx_k} \Mcal^n}(\bd_k) $ belong to $\bar{U}_{\Mcal^n}(\delta): = \bar{U}_{\Mcal}(\delta)\times \cdots \times \bar{U}_{\Mcal}(\delta)$. 
    By the update formula of Line 2 in Algorithm \ref{alg}, we have
    \be  \label{eq:consensus1}
    \begin{aligned}
        & \|\bx_{k+1} - \bbx_{k+1}\|^2 \leq  \| \bx_{k+1} - \bbx_k \|^2 \\
        = & \| \Pcal_{\Mcal^n}(\bx_k + \gamma (\bW - I) \tbx_k + \eta \Pcal_{T_{\bx_k} \Mcal^n}(\bd_k))) - \bbx_k \|^2  \\
        = & \| \Pcal_{\Mcal^n}( \tbW \bx_k + \gamma (\bW - I) (\tbx_k - \bx_k) + \eta \Pcal_{T_{\bx_k} \Mcal^n}(\bd_k))) - \bbx_k  \|^2 \\
        \leq & (1+ \beta)\| \Pcal_{\Mcal^n}(\tbW \bx_k) - \bbx_k \|^2 \\
        & + (1+ \frac{1}{\beta}) \| \Pcal_{\Mcal^n}(\tbW \bx_k) - \Pcal_{\Mcal^n}(\tbW \bx_k + \gamma (\bW - I) (\tbx_k - \bx_k) + \eta \Pcal_{T_{\bx_k} \Mcal^n}(\bd_k)) \|^2, 
    \end{aligned}    
    \ee
    where the first line is due to the optimality of $\bbx_{k+1}$, the fourth line is from Young's inequality. By setting $\beta = \frac{\hat{\rho}}{2}$ and following Lemma \ref{thm:lin-con-pgd} and 
    the 2-Lipschitz continuity of $\Pcal_{\Mcal^n}$ over $\bar{U}_{\Mcal^n}(\delta)$ and $\delta<\frac{R}{2}$, we have that
    \begin{equation}\label{eq:consensus}
        \begin{aligned}
         & \|\bx_{k+1} - \bbx_{k+1}\|^2 \\
         & \leq 
          (1+ \beta)(1 - \hat{\rho}) \|\bx_k - \bbx_k \|^2 + 4(1+ \frac{1}{\beta})\| \gamma (\bW -I)(\tbx_k - \bx_k) + \eta\Pcal_{T_{\bx_k} \Mcal^n}(\bd_k))\|^2 \\
        \leq & \left( 1- \frac{\hat{\rho}}{2}\right) \|\bx_k - \bbx_k\|^2 + \frac{8}{\hat{\rho}}\left(2 \|\gamma (\bW - I) (\tbx_k - \bx_k)\|^2 + 2\|\eta \bd_k\|^2 \right) \\
        \leq & \left( 1- \frac{\hat{\rho}}{2}\right) \|\bx_k - \bbx_k\|^2 + \frac{64}{\hat{\rho}}\gamma^2 \|\tbx_k - \bx_k\|^2 + \frac{16}{\hat{\rho}} \eta^2 \| \bd_k\|^2 \\
        = & \left( 1- \frac{\hat{\rho}}{2}\right) \|\bx_k - \bbx_k\|^2 + \frac{64}{\hat{\rho}}\gamma^2 \|\tbx_k - \bx_k\|^2 + \frac{16}{\hat{\rho}} \eta^2 \| \hbd_k\|^2 + \frac{16}{\hat{\rho}} \eta^2 \| \bd_k - \hbd_k\|^2,  
        \end{aligned}
    \end{equation}   
    Noting the update formula of $\bd_{k+1}$ of Line 5 in Algorithm \ref{alg}, we have
    \be \label{eq:diff-comp-d} \begin{aligned}
        & \| \bd_{k+1} - \hbd_{k+1} \|^2  \leq \|\bd_{k+1} - \hbd_{k} \|^2 \\
        = & \| \bd_k + \gamma(\bu_k - \tbd_k) - \hbd_k + \grad f(\bx_{k+1}) - \grad f(\bx_k) \|^2 \\
        = & \| \tbW \bd_k - \hbd_k + \gamma (\bW - I)(\tbd_k - \bd_k) + \grad f(\bx_{k+1}) - \grad f(\bx_k)   \|^2 \\
        \leq & (1+ \beta) \|\tbW \bd_k - \hbd_k \|^2 + \left( 1 + \frac{1}{\beta}\right)\left( 2\gamma^2 \|(\bW-I)(\tbd_k - \bd_k)\|^2 + 2 L^2\|\bx_{k+1} - \bx_k\|^2  \right) \\
        \leq & \left(1+\beta\right) (1-\gamma \rho) \| \bd_k - \hbd_k \|^2  + \left(1 + \frac{1}{\beta}\right) \left( 8\gamma^2 \|(\tbd_k - \bd_k)\|^2 + 2 L\|\bx_{k+1} - \bx_k\|^2  \right) \\
        \leq & \left(1- \frac{\gamma \rho}{2}\right) \| \bd_k - \hbd_k \|^2  +   \frac{24\gamma}{\rho} \|\tbd_k - \bd_k\|^2 + \frac{6L^2}{\gamma \rho}\|\bx_{k+1} - \bx_k\|^2,
    \end{aligned}
     \ee
     where the first inequality is from the optimality of $\hbd_{k+1}$, the third line is due to the fact $\bu_k = \bW \hbd_k$, the fourth line comes from the Young's inequality, we use $\sigma_{\max}(\tbW - I) \leq (1-\gamma \rho)$ and $ \sigma_{\max}(W - I) \leq 2$ in the fifth line,  we use $\beta = \gamma \rho /2$ and $\gamma \rho < 1$ in the sixth line. Plugging \eqref{eq:diff-iter} into \eqref{eq:diff-comp-d} leads to 
     \be \label{eq:diff-comp-d-f} 
     \begin{aligned}
     \| \bd_{k+1} - \hbd_{k+1} \|^2 \leq & \left(1- \frac{\gamma \rho}{2} + \frac{72 L^2\eta^2}{\gamma \rho }\right) \| \bd_k - \hbd_k \|^2  +   \frac{24\gamma}{\rho} \|\tbd_k - \bd_k\|^2 \\
      & + \frac{288 L^2\gamma}{\rho} \|\tbx_k - \bx_k \|^2 + \frac{288 L^2\gamma}{\rho} \| \bx_k - \bbx_k \|^2 + 
    \frac{72 L^2\eta^2}{\gamma \rho } \|\hbd_k\|^2,
    \end{aligned}
    \ee   
\end{proof}

If the step sizes $\eta$ and $\gamma$ are chosen small enough, the assumptions used Lemma \ref{lem:recur}, $\|\bx_k - \bbx_k\| \leq \delta$ and $2\gamma \|\tbx_k - \bx_k\| + \eta \|\bd_k\| \leq R/4$, will always hold. These two facts are necessary to use the Lipschitz continuity of $\Pcal_{\Mcal}$ in \eqref{eq:consensus}.
\begin{lemma} \label{lem:bound}
    Let $\gamma = C_{\gamma}\alpha$ and $\eta = C_{\eta} \gamma \rho /L$ with some positive constants $C_{\gamma}, C_{\eta}$. For an arbitrary small $\delta > 0$, there exist small enough 
    but fixed $C_{\gamma}$ and $C_{\eta}$, such that for all $k$,
    \[ \| \bx_k - \bbx_k \|^2 \leq \delta^2, \;\; \| \tbx_k - \bx_k \|^2 \leq 2 C_{\gamma} \delta^2, \;\; \|\tbd_k -\bd_k \|^2 \leq \frac{nL_g^2}{\gamma^2},\;\; \|\bd_k - \hbd_k\|^2 \leq \frac{192 n L_g^2}{\gamma^2 \rho^2}.  \]
\end{lemma}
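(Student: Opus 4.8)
The plan is to establish all four inequalities simultaneously by induction on $k$, carrying along two auxiliary facts: the gradient-size bound $\Omega_5^k=\|\hbd_k\|^2\le nL_g^2$ and the two hypotheses of Lemma~\ref{lem:recur}, namely $\|\bx_k-\bbx_k\|\le\delta$ and $2\gamma\|\tbx_k-\bx_k\|+\eta\|\bd_k\|\le R/4$. The order of the quantifiers will matter: first choose $C_\gamma$ as a small but fixed $\mathcal{O}(1)$ constant depending only on $R$, $\rho:=1-\sigma_2$, $n$, $L$, $L_g$ (not on $\delta$); then shrink $\delta$ so that, besides the standing requirement $\delta<\min\{R\gamma\rho,R/4\}$ of Theorem~\ref{thm:lin-con-pgd}, one has $\hat{\rho}=1-\rho_1=\frac{R\gamma\rho-\delta}{R-\delta}\ge\tfrac12 C_\gamma\alpha\rho=:\underline{\rho}>0$; and finally choose $C_\eta$ small, now allowed to depend on $\delta$. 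Since $\gamma=C_\gamma\alpha$ and $\eta=C_\eta\gamma\rho/L$, the normalizations $\gamma^2/\alpha=C_\gamma^2\alpha$ and $\eta^2/\gamma^2=C_\eta^2\rho^2/L^2$ are what tame the prefactors in \eqref{recur-1}--\eqref{recur-4}.

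For the base case, $\tbx_0=\bx_0$ and $\tbd_0=\bd_0$ give $\Omega_1^0=\Omega_2^0=0$; the hypothesis $\bx_0\in\mathcal{N}(\delta)$ gives $\Omega_3^0\le\delta^2$; and $\bd_0=\grad f(\bx_0)$ gives $\Omega_4^0\le\|\bd_0\|^2\le nL_g^2\le 192nL_g^2/(\gamma^2\rho^2)$ (using $\gamma\rho\le1$) and $\Omega_5^0\le nL_g^2$, from the uniform bound $\|\grad f_i(x)\|\le L_g$ on the compact $\Mcal$. The estimate $\Omega_5^k\le nL_g^2$ in fact holds for every $k$: summing Line~5 of Algorithm~\ref{alg} over $i$ and using $\1_n^\top W=\1_n^\top$ (so $\1_n^\top\otimes I_d$ annihilates $(\bW-I)\tbd_k$) yields $\hat{d}_{k+1}=\hat{d}_k+\hat{g}_{k+1}-\hat{g}_k$; hence $\hat{d}_k=\hat{g}_k$ for all $k$ because $\hat{d}_0=\hat{g}_0$, so $\Omega_5^k=n\|\hat{g}_k\|^2\le nL_g^2$ by Jensen's inequality.

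For the inductive step, assume the five bounds at step $k$. The hypothesis $\|\bx_k-\bbx_k\|\le\delta$ of Lemma~\ref{lem:recur} is exactly $\sqrt{\Omega_3^k}\le\delta$, and $2\gamma\|\tbx_k-\bx_k\|+\eta\|\bd_k\|\le R/4$ follows from $\|\tbx_k-\bx_k\|\le\sqrt{2C_\gamma}\,\delta$ and $\|\bd_k\|^2\le2\Omega_4^k+2\Omega_5^k\le 384nL_g^2/(\gamma^2\rho^2)+2nL_g^2$, since then $2\gamma\|\tbx_k-\bx_k\|=\mathcal{O}(C_\gamma^{3/2}\alpha\delta)$ and $\eta\|\bd_k\|=\frac{C_\eta\gamma\rho}{L}\|\bd_k\|=\mathcal{O}\!\big(C_\eta\sqrt n\,L_g/L\big)$ are each $\le R/8$ once $\delta$ and $C_\eta$ are small. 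Now plug the inductive bounds into the right-hand sides of \eqref{recur-1}--\eqref{recur-4}, substitute $\gamma=C_\gamma\alpha$, $\eta=C_\eta\gamma\rho/L$, and use $\hat{\rho}\ge\underline{\rho}$. Each recursion then takes the form $\Omega_i^{k+1}\le(1-c_i)\Omega_i^k+E_i$; after absorbing the $\gamma^2/\alpha$- and $\eta^2$-perturbations of the leading coefficient into the contraction (legitimate since $C_\gamma^2\alpha$ and $C_\eta^2\rho^2$ are small) one gets $c_1,c_2\ge\alpha/4$, $c_3\ge\underline{\rho}/2$, $c_4\ge\gamma\rho/4$, while $E_i$ is a finite sum of terms each proportional to $C_\gamma$, to $C_\eta^2$, or to $\delta^2$ times the appropriate scale. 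A direct accounting then shows $E_i\le c_iB_i$ for the target bounds $B_1=2C_\gamma\delta^2$, $B_2=nL_g^2/\gamma^2$, $B_3=\delta^2$, $B_4=192nL_g^2/(\gamma^2\rho^2)$, provided $C_\gamma$ is below an explicit $\mathcal{O}(1)$ threshold (depending on $\rho$) and $C_\eta$ is below an explicit threshold of order $\delta$; hence $\Omega_i^{k+1}\le B_i$, which closes the induction.

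The main obstacle is the bookkeeping required to make a single pair $(C_\gamma,C_\eta)$ satisfy all four self-improvement inequalities at once, and two features of the recursions make this delicate. First, $\hat{\rho}$ depends on $\gamma$ (hence on $C_\gamma$) and is positive only when $\delta<RC_\gamma\alpha\rho$, so $C_\gamma$ cannot be allowed to shrink with $\delta$; this is exactly why $C_\gamma$ is fixed first and $\delta$, then $C_\eta$, afterwards. Second, the recursions for $\Omega_2$ and $\Omega_4$ are mutually coupled --- via the $\frac{24\gamma^2+72L^2\eta^2}{\alpha}\Omega_4^k$ term in \eqref{recur-2} and the $\frac{24\gamma}{\rho}\Omega_2^k$ term in \eqref{recur-4} --- and both target bounds carry a $\gamma^{-2}$ blow-up, so one must check that this loop does not amplify. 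It does not, because after the substitutions the $\Omega_4$-contribution to $E_2$ and the $\Omega_2$-contribution to $E_4$ each consume only a fixed fraction of the decay budgets $c_2B_2$ and $c_4B_4$, leaving room for the $\delta^2$- and $C_\eta^2$-order remainders; the former are harmless because they carry positive powers of the small parameters, and the latter are removed in the last step by shrinking $C_\eta$.
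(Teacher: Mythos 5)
Your proposal is correct and follows essentially the same route as the paper: induction on $k$, the identity $\hat{d}_k=\hat{g}_k$ (hence $\Omega_5^k\le nL_g^2$) obtained by averaging the gradient-tracking update, and substitution of the inductive bounds into the recursions \eqref{recur-1}--\eqref{recur-4} to show they are self-improving for sufficiently small $C_\gamma$ and $C_\eta$. Your treatment is somewhat more explicit than the paper's about the order of quantifiers (fixing $C_\gamma$ independently of $\delta$ so that $\hat{\rho}>0$ survives, then choosing $C_\eta$ of order $\delta$) and about re-verifying the hypotheses of Lemma \ref{lem:recur} inside the induction, but these are refinements of the same argument rather than a different one.
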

\begin{proof}
    We use proof by induction. For $k=0$, it follows the initialization, we have
    $ \|\bx_0 - \bbx_0 \| = 0, \;\; \| \tbx_0 - \bx_0\| = 0, \;\; \|\bd_0 \| = \|\grad f(\bx_0)\| \leq \sqrt{n} L_g.$
    Assume that 
    \[ \| \bx_k - \bbx_k \|^2 \leq \delta^2, \;\; \| \tbx_k - \bx_k \|^2 \leq 2C_{\gamma} \delta^2, \;\;\|\tbd_k -\bd_k \|\leq \frac{nL_g^2}{\gamma^2},\;\; \|\bd_k - \hbd_k\| \leq \frac{192 n L_g^2}{\gamma^2 \rho^2}. \]
    Following  \eqref{eq:diff-comp-d}, we have
    \be \label{eq:consensus-d}
    \begin{aligned}
        \|\bd_k - \hbd_k \|^2 & = \| \tbW \bd_k - \hbd_k + \gamma (\bW - I)(\tbd_k - \bd_k) + \grad f(\bx_{k+1}) - \grad f(\bx_k)   \|^2 \\
        & \leq \left(1- \frac{\gamma \rho}{2}\right) \| \bd_k - \hbd_k \|^2  +   \frac{24\gamma}{\rho} \|\tbd_k - \bd_k\|^2 + \frac{6}{\gamma \rho}\|\grad f(\bx_{k+1}) - \grad f(\bx_k)\|^2 \\
        & \leq \left(1- \frac{\gamma \rho}{2}\right) \| \bd_k - \hbd_k \|^2  +   \frac{24\gamma}{\rho} \|\tbd_k - \bd_k\|^2 + \frac{24n L_g^2}{\gamma\rho}.
    \end{aligned}
    \ee
    Define ${\bf \Omega}^{k+1} := [\Omega_1^k, \Omega_2^k, \Omega_3^k, \Omega_4^k]^\top$. 
    Combining \eqref{eq:consensus-d} with \eqref{recur-1}, \eqref{recur-2}, \eqref{recur-3}, and \eqref{recur-4} gives
    \be \label{recur-matrix} {\bf \Omega}^{k+1} \leq {\bf A} {\bf \Omega}^k + {\bf a}, \ee
    where
    \[
{\bf A} = \begin{bmatrix}
1 - \frac{\alpha}{2} + \frac{96 \gamma^2}{\alpha} & 0 & \frac{96 \gamma^2}{\alpha} & \frac{24 \eta^2}{\alpha} \\
\frac{288 L^2 \gamma^2}{\alpha} & 1 - \frac{\alpha}{2} + \frac{24 \gamma^2}{\alpha} & \frac{288 L^2 \gamma^2}{\alpha} & \frac{24 \gamma^2 + 72L^2 \eta^2}{\alpha} \\
\frac{64 \gamma^2}{\hat{\rho}} & 0 & 1 - \frac{\hat{\rho}}{2} & \frac{16 \eta^2}{\hat{\rho}} \\
0 & \frac{24 \gamma}{\rho} & 0& 1 - \frac{\gamma \rho}{2}
\end{bmatrix}
\]
and ${\bf a} = [\frac{24 \eta^2}{\alpha} \Omega_5^k, \frac{72 L^2 \eta^2}{\alpha} \Omega_5^k, \frac{16 \eta^2}{\hat{\rho}} \Omega_5^k, \frac{24 nL_g^2}{\gamma \rho} ]^\top$. For sufficiently small $C_{\gamma}$ and $C_{\eta}$ and noting $\hat{\rho} = \mathcal{O}(\gamma)$ when $\gamma \rightarrow 0$, it holds 
\[ 1- \frac{\alpha}{2} + \frac{96 \gamma^2}{\alpha} \leq 1 - \frac{\alpha}{4}, \; \frac{288 L^2 \gamma^2}{\alpha} \leq \frac{\alpha  C_\gamma}{4},\; \frac{72L^2\eta^2}{\alpha} \leq \frac{\alpha \eta}{4}, \; \frac{72L\eta^2}{\gamma \rho} \leq \frac{\gamma \rho C_\eta}{4}. \]
It follows from the update rule of $\bd_k$ and \eqref{eq:rela-s-x} that  
\begin{equation}
\begin{aligned}
    & \hat{d}_{k+1}  = \hat{d}_k +  \gamma \frac{1}{n} \sum_{i=1}^n ( u_{i,k} - \tilde{d}_{i,k} )+ \hat{g}_{k+1} - \hat{g}_k  \\
    & = \hat{d}_k +  \gamma \frac{1}{n} \sum_{i=1}^n ( \sum_{j=1}^n W_{ij} \tilde{d}_{j,k} - \tilde{d}_{i,k} )+ \hat{g}_{k+1} - \hat{g}_k  = \hat{d}_k + \hat{g}_{k+1} - \hat{g}_k
    \end{aligned}
\end{equation}
Since the initial strategy $d_{i,0} = \grad f_i(x_{i,0})$, we have that $\hat{d}_k = \hat{g}_k =  \frac{1}{n}\sum_{i=1}^n \grad f_i(x_{i,k})$.  
This implies $\|\Omega_5^k\| \leq n L_g^2$ for any $k$. Then, the inequality \eqref{recur-matrix} gives
\[ \begin{aligned}
    \| \Omega_1^{k+1} \| & \leq \left(1- \frac{\alpha}{4} \right) 2C_\gamma \delta^2 + \frac{\alpha C_{\gamma}}{4} \delta^2 + C_{\eta}\eta(\frac{192 n L_g^2}{\gamma^2 \rho^2}+ n L_g^2) \\
    \|\Omega_2^{k+1}\| & \leq \left(1 -\frac{\alpha}{4}\right) \frac{nL_g^2}{\gamma^2} + \frac{\alpha C_{\gamma}}{4}(2C_\gamma \delta^2 + \delta^2) + \frac{\alpha C_{\gamma}}{2} \frac{192 n L_g^2}{\gamma^2 \rho^2} + \frac{\alpha \eta}{4} n L_g^2, \\
    \|\Omega_3^{k+1}\| & \leq (1 - \frac{\hat{\rho}}{2})\delta^2 + \frac{64\gamma^2 }{\hat{\rho}}2C_\gamma \delta^2 +  \frac{C_{\eta} \hat{\rho}}{2} (\frac{192 n L_g^2}{\gamma^2 \rho^2} + nL_g^2) \\
    \|\Omega_4^{k+1}\| & \leq \left(1- \frac{\gamma \rho}{4} \right) \frac{192 n L_g^2}{\gamma^2 \rho^2} + \frac{24 C_\gamma}{\rho} \frac{nL_g^2}{\gamma^2} + \frac{24n L_g^2}{\gamma \rho}. 
\end{aligned} \]
It is easy to verify that for  small enough $C_\gamma$ and $C_{\eta}$, 
\[ \begin{aligned}
    & \| \bx_{k+1} - \bbx_{k+1} \|^2 \leq \delta^2, \;\; \| \tbx_{k+1} - \bx_{k+1} \|^2 \leq 2 C_{\gamma} \delta^2, \\
    & \|\tbd_{k+1} -\bd_{k+1} \|^2 \leq \frac{nL_g^2}{\gamma^2},\;\; \|\bd_{k+1} - \hbd_{k+1}\|^2 \leq \frac{192 n L_g^2}{\gamma^2 \rho^2}.
\end{aligned}
 \]
\end{proof}

The following inequality on the distance between the Euclidean mean and manifold mean holds. 
\begin{lemma}{\cite[Lemma 4]{deng2023decentralized}} \label{lemma:quadratic}
   For any $\bx\in \Mcal^n$ satisfying $\|x_i - \bar{x}\| \leq \frac{R}{2}$, $i\in [n]$,  we have 
   \be\label{eq:distance-an-rm}
   \|\bar{x} - \hat{x} \| \leq M_2 \frac{\|\bx - \bar{\bx}\|^2}{n},
   \ee
   where $M_2 =\max_{x\in \bar{U}_\Mcal(\frac{R}{2})} \|D^2 \Pcal_{\Mcal}(x) \|_{\rm op}$.
\end{lemma}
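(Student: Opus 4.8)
The plan is to combine the $C^2$-smoothness of the nearest-point projection $\Pcal_{\Mcal}$ inside the $R$-tube with a second-order Taylor expansion around the manifold mean $\bar{x}=\Pcal_{\Mcal}(\hat{x})$, exploiting that each $x_i$ already lies on $\Mcal$.

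First I would record the elementary facts. By hypothesis $\|x_i-\bar{x}\|\le R/2$ for every $i$, so $\|\hat{x}-\bar{x}\|=\|\frac1n\sum_i(x_i-\bar{x})\|\le\frac1n\sum_i\|x_i-\bar{x}\|\le R/2$; hence $\hat{x}\in\bar{U}_{\Mcal}(R/2)\subset U_{\Mcal}(R)$ and, by the regularity result \cite[Lemma]{foote1984regularity}, $\Pcal_{\Mcal}$ is twice continuously differentiable on a neighbourhood of $\bar{x}$ and of each segment $[\bar{x},x_i]$, with $D\Pcal_{\Mcal}(\bar{x})=\Pcal_{T_{\bar{x}}\Mcal}$. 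Moreover, by first-order optimality of the nearest-point projection, $\hat{x}-\bar{x}\in N_{\bar{x}}\Mcal$, equivalently $\Pcal_{T_{\bar{x}}\Mcal}(\hat{x}-\bar{x})=0$.

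Next, for each $i$ I would apply Taylor's theorem to $\Pcal_{\Mcal}$ at $\bar{x}$ along the segment to $x_i$. Since $\Pcal_{\Mcal}(\bar{x})=\bar{x}$ and $\Pcal_{\Mcal}(x_i)=x_i$ (because $x_i\in\Mcal$), this yields
\[
x_i-\bar{x}=\Pcal_{T_{\bar{x}}\Mcal}(x_i-\bar{x})+r_i,\qquad \|r_i\|\le M_2\,\|x_i-\bar{x}\|^2,
\]
where $M_2=\max_{x\in\bar{U}_{\Mcal}(R/2)}\|D^2\Pcal_{\Mcal}(x)\|_{\rm op}$ controls the quadratic remainder uniformly (one could even keep a factor $1/2$ here, but $M_2$ suffices). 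Averaging over $i\in[n]$ and using $\Pcal_{T_{\bar{x}}\Mcal}\big(\frac1n\sum_i(x_i-\bar{x})\big)=\Pcal_{T_{\bar{x}}\Mcal}(\hat{x}-\bar{x})=0$, the linear terms cancel, leaving $\hat{x}-\bar{x}=\frac1n\sum_i r_i$. The triangle inequality then gives
\[
\|\hat{x}-\bar{x}\|\le\frac1n\sum_i\|r_i\|\le\frac{M_2}{n}\sum_i\|x_i-\bar{x}\|^2=\frac{M_2}{n}\|\bx-\bar{\bx}\|^2,
\]
which is the claimed estimate.

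The substantive point — the only place genuine work is needed — is the Taylor step: one must invoke that $\Pcal_{\Mcal}$ is $C^2$ on $U_{\Mcal}(R)$ with differential at $\bar{x}$ equal to the orthogonal tangential projection, and bound its second derivative uniformly over $\bar{U}_{\Mcal}(R/2)$; this is precisely what $M_2$ encodes. The remaining ingredients — that $\hat{x}$ stays in the tube, that $\hat{x}-\bar{x}$ is normal at $\bar{x}$, and the averaging plus triangle-inequality bookkeeping — are routine.
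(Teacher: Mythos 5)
Your argument is correct. Note that the paper does not actually prove this lemma---it is imported verbatim as \cite[Lemma 4]{deng2023decentralized}---so there is no in-paper proof to compare against; your derivation is the natural one that the stated constant $M_2=\max_{x\in\bar{U}_{\Mcal}(R/2)}\|D^2\Pcal_{\Mcal}(x)\|_{\rm op}$ implicitly encodes, namely a second-order Taylor expansion of the smooth projection $\Pcal_{\Mcal}$ (Foote's regularity result, already invoked by the paper before Lemma \ref{lemma-project}) combined with the identities $\Pcal_{\Mcal}(x_i)=x_i$, $D\Pcal_{\Mcal}(\bar x)=\Pcal_{T_{\bar x}\Mcal}$, and $\hat x-\bar x\in N_{\bar x}\Mcal$. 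All the supporting details check out: each segment $[\bar x,x_i]$ stays in $\bar U_{\Mcal}(R/2)$ because $\|(1-t)\bar x+tx_i-\bar x\|\le\|x_i-\bar x\|\le R/2$, so $M_2$ uniformly bounds the remainder (with the spare factor $\tfrac12$ you mention), and the averaging step is exact. One minor remark: expanding instead at $\hat x$ would let the linear terms cancel automatically from $\frac1n\sum_i(x_i-\hat x)=0$, bypassing the normality argument, at the cost of then converting $\sum_i\|x_i-\hat x\|^2$ back to $\sum_i\|x_i-\bar x\|^2$ via the optimality of $\hat x$; your choice of base point $\bar x$ avoids that conversion and directly yields the stated bound.
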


Based on the Lipschitz smoothness assumption \ref{ass:lip}, we have the following one-step decrease on $f$. 

\begin{lemma} \label{lem:descent}
    Suppose that Assumptions \ref{assum-w} and \ref{ass:lip} hold. Let $\gamma = C_{\gamma} \alpha$ and $\eta = C_{\eta}\gamma \rho/L$.
    For sufficiently small $C_{\eta}$ and $C_{\gamma}$, we have
    \be \label{eq:descent} 
    \begin{aligned}
       f(\bbx_{k+1}) \leq & f(\bbx_k) - (\eta - (2 Q +  25)L \eta^2)  \|\hg_k\|^2 + \mathcal{D}_1 \frac{1}{n} \| \bx_k - \bbx_k\|^2 \\
       &+ \mathcal{D}_2 \frac{1}{n} \|\bx_{k+1} - \bbx_{k+1}\|^2 + \mathcal{D}_3  \frac{\gamma^2}{n}\|\bx_k - \tbx_k\|^2 + \mathcal{D}_4 \frac{\eta^2}{n}\|\bd_k - \hbd_k\|^2, 
    \end{aligned}
    \ee
    where $\mathcal{D}_1 = 16 QL\gamma^2 + 8 \gamma^2 L + L + \frac{c_0 M_2^2 L}{ \gamma^4} + 192L\gamma^2, \mathcal{D}_2 = \frac{c_0 M_2^2 L}{ \gamma^4}, \mathcal{D}_3 = 16 QL +   200L$, and $\mathcal{D}_4 = 2QL +  25 L $. 
\end{lemma}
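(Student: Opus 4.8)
The plan is to run the standard one-step-descent argument for gradient-tracking methods, but carried out along the trajectory of the manifold means $\bar x_k = \Pcal_{\Mcal}(\hat x_k)$ and with the curvature of $\Mcal$ accounted for through Lemmas~\ref{lemma-project} and~\ref{lemma:quadratic}. First I would apply the Euclidean descent inequality in \eqref{eq:quad} to each $f_i$ at the pair $(\bar x_k,\bar x_{k+1})$ and average over $i\in[n]$, which gives
\[
f(\bbx_{k+1}) \le f(\bbx_k) + \iprod{\grad f(\bar x_k)}{\bar x_{k+1}-\bar x_k} + \frac{L}{2}\|\bar x_{k+1}-\bar x_k\|^2 ,
\]
using $\frac1n\sum_i\grad f_i(\bar x_k) = \grad f(\bar x_k)$. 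Since it was already verified in the proof of Lemma~\ref{lem:bound} that $\hat d_k = \hat g_k$, and since $\|\grad f(\bar x_k)-\hg_k\| \le L\|\bx_k - \bbx_k\|/\sqrt n$ by the second line of \eqref{eq:quad}, I may replace $\grad f(\bar x_k)$ by $\hg_k$ throughout, paying a cross term that, after a Young's step, is absorbed into $\mathcal D_1\tfrac1n\|\bx_k - \bbx_k\|^2$. Everything then reduces to producing a sufficiently accurate expansion of $\bar x_{k+1}-\bar x_k$.

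For that expansion I would first look at the Euclidean means. Writing the $\bx$-update of Algorithm~\ref{alg} componentwise as $x_{i,k+1} = \Pcal_{\Mcal}(x_{i,k}+u_{i,k})$ with $u_{i,k} = \gamma((\bW-I)\tbx_k)_i - \eta\,\Pcal_{T_{x_{i,k}}\Mcal}(d_{i,k})$, the bounds of Lemma~\ref{lem:bound} together with the hypothesis $2\gamma\|\tbx_k-\bx_k\| + \eta\|\bd_k\|\le R/4$ ensure $\|u_{i,k}\|\le R/2$, so Lemma~\ref{lemma-project} applies and $x_{i,k+1} = x_{i,k} + \Pcal_{T_{x_{i,k}}\Mcal}(u_{i,k}) + e_{i,k}$ with $\|e_{i,k}\|\le Q\|u_{i,k}\|^2$. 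Averaging over $i$, the doubly stochastic property of $W$ kills $\frac1n\sum_i((\bW-I)\tbx_k)_i$, and the normal components of the vectors $\gamma((\bW-I)\tbx_k)_i$ are quadratically small by the proximal-smoothness estimate \eqref{proximally-0}, so $\hat x_{k+1} = \hat x_k - \tfrac{\eta}{n}\sum_{i=1}^n\Pcal_{T_{x_{i,k}}\Mcal}(d_{i,k}) + r_k$, where $\|r_k\|$ is at most $\mathcal{O}(Q)$ times $\frac1n\sum_i\|u_{i,k}\|^2$, hence a combination of $\tfrac{\gamma^2}{n}\|\bx_k-\tbx_k\|^2$, $\tfrac{\gamma^2}{n}\|\bx_k-\bbx_k\|^2$, $\tfrac{\eta^2}{n}\|\bd_k-\hbd_k\|^2$ and $\eta^2\|\hg_k\|^2$ (using $\hat d_k = \hat g_k$ to split $\bd_k$). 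Replacing $d_{i,k}$ by $\hat d_k=\hg_k$ and $\Pcal_{T_{x_{i,k}}\Mcal}$ by $\Pcal_{T_{\bar x_k}\Mcal}$ costs, respectively, $\Omega_4$- and $\Omega_3$-type terms via the Lipschitz continuity of $\Pcal_{T_x\Mcal}$ in $x$, and $\Pcal_{T_{\bar x_k}\Mcal}(\hg_k)$ differs from $\hg_k$ only by the normal component of an (approximately) tangent vector, which is again $\Omega_3$-small. Finally, applying $\Pcal_{\Mcal}$ to $\hat x_{k+1}$ and to $\hat x_k$ and using that $\Pcal_{\Mcal}$ is smooth in the $R$-tube with differential $\Pcal_{T_{\bar x_k}\Mcal}$ up to a first-order error controlled by $\mathrm{dist}(\hat x_k,\Mcal)$, which Lemma~\ref{lemma:quadratic} bounds by $M_2\|\bx_k-\bbx_k\|^2/n$, yields $\bar x_{k+1}-\bar x_k = -\eta\hg_k + \widetilde r_k$ with $\widetilde r_k$ a controllable combination of the same four families of error terms; the manifold-versus-Euclidean discrepancy at step $k+1$, handled the same way via Lemma~\ref{lemma:quadratic}, is what forces the term $\tfrac1n\|\bx_{k+1}-\bbx_{k+1}\|^2$ in \eqref{eq:descent} with the coefficient $\mathcal D_2$ carrying the $M_2^2$ dependence (and the step-size normalizations).

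Plugging $\bar x_{k+1}-\bar x_k = -\eta\hg_k + \widetilde r_k$ into the displayed descent inequality, the term $\iprod{\hg_k}{-\eta\hg_k}$ produces the leading $-\eta\|\hg_k\|^2$, while $\iprod{\hg_k}{\widetilde r_k}$ and $\tfrac L2\|\bar x_{k+1}-\bar x_k\|^2$ are expanded by Young's inequalities so that each summand lands in $\tfrac1n\|\bx_k-\bbx_k\|^2$, $\tfrac1n\|\bx_{k+1}-\bbx_{k+1}\|^2$, $\tfrac{\gamma^2}{n}\|\bx_k-\tbx_k\|^2$ or $\tfrac{\eta^2}{n}\|\bd_k-\hbd_k\|^2$, and the residual multiples of $\|\hg_k\|^2$ — in particular the genuine $\eta^2 Q\|\hg_k\|^2$ pieces coming from the curvature error $Q\|u_{i,k}\|^2$ and the $\mathcal{O}(L\eta^2\|\hg_k\|^2)$ piece from $\tfrac L2\|\bar x_{k+1}-\bar x_k\|^2$ — are collected into the coefficient $(2Q+25)L\eta^2$. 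The prescription $\gamma=C_\gamma\alpha$, $\eta=C_\eta\gamma\rho/L$ with $C_\gamma,C_\eta$ small is used precisely so that the $1/\eta$- and $1/\gamma$-type weights generated by these Young's steps keep the $\mathcal D_i$ finite and so that $\|u_{i,k}\|\le R/2$ and $\|\bx_k-\bbx_k\|\le\delta$ (needed to invoke the Lipschitz continuity of $\Pcal_{\Mcal}$) remain valid. I expect the main obstacle to be exactly this second-order bookkeeping: because $\hat x_k$ and $\hat x_{k+1}$ lie only \emph{near} $\Mcal$, every use of the identity ``$D\Pcal_{\Mcal}=\Pcal_{T\Mcal}$'' must be paid for with an extra $\mathrm{dist}(\cdot,\Mcal)$ factor supplied by Lemma~\ref{lemma:quadratic}, and the curvature error $Q\|u_{i,k}\|^2$ cannot simply be discarded since it contains an honest $\eta^2\|\hg_k\|^2$ contribution that must be routed into the $\|\hg_k\|^2$ coefficient — which is why $Q$ appears in $\mathcal D_3$, $\mathcal D_4$ and in the headline $(2Q+25)L\eta^2$.
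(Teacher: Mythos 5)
Your proposal is correct and follows essentially the same route as the paper: both start from the Lipschitz descent inequality \eqref{eq:quad} applied along the manifold means $\bbx_k$, swap $\grad f(\bbx_k)$ for $\hg_k$ at the cost of an $\Omega_3$-term, extract the leading $-\eta\|\hg_k\|^2$ from the Euclidean-mean increment using $\hat d_k=\hat g_k$ and the cancellation $\sum_i s_{i,k}=\sum_i\tilde x_{i,k}$, control the curvature error with Lemma \ref{lemma-project}, the $\bar x$-versus-$\hat x$ gap with Lemma \ref{lemma:quadratic}, and distribute everything with Young's inequalities under $\gamma=C_\gamma\alpha$, $\eta=C_\eta\gamma\rho/L$. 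The only substantive differences are cosmetic or mechanical: the paper splits $\iprod{\hbg_k}{\bbx_{k+1}-\bbx_k}$ into three pieces rather than expanding $\bar x_{k+1}-\bar x_k=-\eta\hg_k+\widetilde r_k$ outright; it kills the first-order normal contributions by pairing them with $\hg_k-\grad f_i(x_{i,k})$ (using that $\grad f_i(x_{i,k})\perp N_{x_{i,k}}\Mcal$) instead of your appeal to \eqref{proximally-0} and the Lipschitz continuity of $\Pcal_{T_x\Mcal}$, which is needed because the normal part of $(\bW-I)(\tbx_k-\bx_k)$ is only first order in the compression error; and the quartic terms $\|\bx_k-\bbx_k\|^4$ produced by Lemma \ref{lemma:quadratic} are reduced to quadratic ones via the a priori bound $\|\bx_k-\bbx_k\|^2\le c_0\eta^2/\gamma^4$, which is exactly where the constant $c_0$ in $\mathcal{D}_1$ and $\mathcal{D}_2$ originates — a step your sketch alludes to only implicitly through the ``step-size normalizations.''
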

\begin{proof}
    It follows from \eqref{eq:quad} that
    \be \label{eq:dec-lip} 
    \begin{aligned}
      &  f(\bbx_{k+1}) \leq  f(\bbx_k) + \iprod{\grad f(\bbx_{k})}{\bbx_{k+1} - \bbx_k} + \frac{L}{2}\|\bbx_{k+1} - \bbx_k\|^2 \\
        = &   f(\bbx_k) + \iprod{\hbg_k}{\bbx_{k+1} - \bbx_k} + \iprod{\grad f(\bbx_{k}) - \hbg_k}{\bbx_{k+1} - \bbx_k} + \frac{L}{2}\|\bbx_{k+1} - \bbx_k\|^2 \\
        \leq & f(\bbx_k) + \iprod{\hbg_k}{\bbx_{k+1} - \bbx_k} + \frac{1}{L}\| \hbg_k - \grad f(\bbx_k)\|^2 + \frac{3L}{4}\|\bbx_{k+1} - \bbx_k \|^2 \\
        \leq & f(\bbx_k) + \iprod{\hbg_k}{\hbx_{k+1} - \hbx_k} + \iprod{\hbg_k}{\bbx_{k+1}- \hbx_{k+1} + \hbx_{k} - \bbx_k}+ L \| \bx_k - \bbx_k\|^2  + \frac{3L}{4}\|\bbx_{k+1} - \bbx_k \|^2 \\
        \leq & f(\bbx_k) + \underbrace{\iprod{\hbg_k}{\hbx_{k+1} - \hbx_k}}_{b_1} + \frac{\eta^2 L}{2}\| \hbg_{k}\|^2 + \underbrace{\frac{L}{\eta^2}(\|\bbx_{k+1} - \hbx_{k+1}\|^2 + \|\hbx_k - \bbx_k\|^2)}_{b_2} \\
        & + L \|\bx_k - \bbx_k \|^2 + \underbrace{\frac{3L}{4}\|\bbx_{k+1} - \bbx_k\|^2}_{b_3},
    \end{aligned}
    \ee
    where the second inequality is from Young's inequality, the third inequality is due to the fact that 
    $$
    \begin{aligned}
    & \|\hbg_k - \grad f(\bbx_k)\|^2 \leq n \| \hat{g}_k - \grad f(\bar{x}_k) \|^2 \\
    & \leq \frac{n}{n} \sum_{i=1}^n \|\grad f_i(x_k) -  \grad f_i(\bar{x}_k) \|^2 \leq L^2 \|\bx_k - \bar{\bx}_k \|^2,
    \end{aligned}
    $$ and the fourth inequality is because of Young's inequality. 

    We turn to bound $b_1, b_2$ and $b_3$, respectively. Denote $v_{i,k} = \Pcal_{T_{x_{i,k}} \Mcal }(d_{i,k})$. 
    \be \label{eq:b1}
    \begin{aligned}
        b_1 = & \iprod{\hg_k}{\frac{1}{n}\sum_{i=1}^n (x_{i,k+1} - x_{i,k} - \gamma (s_{i,k} - \widetilde{x}_{i,k}) + \eta v_{i,k}} - \iprod{\hg_k}{ \eta \frac{1}{n} \sum_{i=1}^n v_{i,k}} \\
        = & \frac{1}{n}\iprod{\hg_k}{\sum_{i=1}^n (x_{i,k+1} - x_{i,k} - \gamma (s_{i,k} - \widetilde{x}_{i,k}) + \eta v_{i,k}} - \eta  \|\hg_k\|^2 + \frac{1}{n}\iprod{\hg_k}{\eta \sum_{i=1}^n (d_{i,k} - v_{i,k})}, 
    \end{aligned}
    \ee
    where we use $\frac{1}{n}\sum_{i=1}^n d_{i,k} =  \hg_k$ and $\sum_{i=1}^n s_{i,k} = \sum_{i=1}^n \widetilde{x}_{i,k}$, which is derived from \eqref{eq:rela-s-x}. It follows from the Lipschitz-type inequality of $\Pcal_{\Mcal}$ in Lemma \ref{lemma-project} that
    \be \label{eq:b11} 
    \begin{aligned}
    & \iprod{\hg_k}{\sum_{i=1}^n (x_{i,k+1} - x_{i,k} - \gamma (s_{i,k} - \widetilde{x}_{i,k}) + \eta v_{i,k} } \\
    = & \iprod{\hg_k}{\sum_{i=1}^n \Pcal_{\Mcal}(x_{i,k} + \gamma (s_{i,k} - \widetilde{x}_{i,k}) - \eta v_{i,k}) - (x_{i,k} +\gamma (s_{i,k} - \widetilde{x}_{i,k}) -\eta v_{i,k}) } \\
    \leq & - \gamma \iprod{\hg_k}{\sum_{i=1}^n \Pcal_{N_{x_{i,k}}\Mcal}(s_{i,k} - \widehat{x}_{i,k})} + Q\|\hg_k\|\sum_{i=1}^n\| \gamma(s_{i,k} - \widetilde{x}_{i,k}) + \eta v_{i,k} \|^2 \\
    \leq & - \gamma \iprod{\hg_k}{\sum_{i=1}^n \Pcal_{N_{x_{i,k}}\Mcal}(s_{i,k} - \widehat{x}_{i,k})}  + QL (2\gamma^2 \|\bs_k - \tbx_k\|^2  + 2\eta^2 \| \bd_k \|^2)  \\
    = & \gamma \iprod{\hg_k}{\sum_{i=1}^n \Pcal_{N_{x_{i,k}}\Mcal}(\widehat{x}_{i,k} - s_{i,k} )} + 2QL(\gamma^2 \|(\bW -I)\tbx_k\|^2 + \eta^2 \|\bd_k - \hbd_k\|^2 + \eta^2 \|\hbd_k\|^2)  \\
    \leq & \gamma \iprod{\hg_k}{\sum_{i=1}^n \Pcal_{N_{x_{i,k}}\Mcal}(\widehat{x}_{i,k} - s_{i,k} )} + 16 QL\gamma^2( \|\tbx_k - \bx_k\|^2 + \|\bx_k - \bbx_k\|^2)\\
    & + 2QL\eta^2  (\|\bd_k - \hbd_k\|^2 + n \|\hg_k\|^2),
    \end{aligned}
    \ee
    where the second inequality is from Young's inequality and $\|\widehat{g}_k\| \leq \frac{1}{n}\sum_{i=1}^n\|\nabla f_i(x_{i,k})\|\leq L$, and the third inequality is due to $\sigma_{\max} (W - I)\leq 2$ and Young's inequality. Note that
    \be \label{eq:normal1}
    \begin{aligned}
        & \gamma \iprod{\hg_k}{\sum_{i=1}^n \Pcal_{N_{x_{i,k}}\Mcal}(\widehat{x}_{i,k} - s_{i,k})} =   \gamma \sum_{i=1}^n \iprod{\hg_k - \grad f_i(x_{i,k})}{ \Pcal_{N_{x_{i,k}}\Mcal}(\widehat{x}_{i,k} - s_{i,k})} \\
        \leq & \frac{1}{4L} \sum_{i=1}^n\| \hg_k - \grad f_i(x_{i,k}) \|^2 + \gamma^2 L \sum_{i=1}^n   \|\Pcal_{N_{x_{i,k}}\Mcal}(\widehat{x}_{i,k} - s_{i,k})\|^2 \\
        \leq & \frac{1}{4nL}\sum_{i=1}^n \sum_{j=1}^n\|\grad f_i(x_{i,k}) - \grad f_j(x_{j,k}) \|^2 + \gamma^2 L \| (\bW -I) \tbx_k\|^2 \\
        \leq & \frac{L}{4}\|\bx_k - \bbx_k\|^2 + 8\gamma^2 L(\|\bx_k - \tbx_k\|^2 + \|\bx_k - \bbx_k\|^2), 
    \end{aligned}
    \ee
    where the first inequality is from Young's inequality and the last inequality is due to $\sigma_{\max}(W- I) \leq 2$. Similarly, since $d_{i,k} - v_{i,k} = \Pcal_{N_{x_{i,k}} \Mcal}(d_{i,k})$, we have
    \be \label{eq:normal2} 
    \begin{aligned}
        \eta \iprod{\hg_k}{\sum_{i=1}^n (d_{i,k} - v_{i,k})} = & \leq \eta \sum_{i=1}^n \iprod{\hg_k - \grad f_i(x_{i,k})}{d_{i,k} - v_{i,k}} \\
        \leq & \frac{L}{4}\|\bx_k - \bbx_k\|^2 + \eta^2 L(\|\bd_k - \hbd_k\|^2 +  n\|\hg_k\|^2). 
    \end{aligned}
    \ee
    Plugging \eqref{eq:b11}, \eqref{eq:normal1}, and \eqref{eq:normal2} into \eqref{eq:b1} gives
    \be \label{eq:b1-f} 
    \begin{aligned}
        b_1 \leq & -(\eta - 2\eta^2 QL - \eta^2 L)  \|\hg_k\|^2 + \frac{16 QL\gamma^2 + 8 \gamma^2 L + L}{n} \| \bx_k - \bbx_k\|^2 \\
        &+ \frac{16 QL\gamma^2 + 8 \gamma^2 L}{n} \|\bx_k - \tbx_k\|^2 + \frac{2QL\eta^2 + \eta^2 L}{n}\|\bd_k - \hbd_k\|^2. 
    \end{aligned}
    \ee
    Regarding $b_2$, it follows from Lemma \ref{lemma:quadratic} that 
    \be \label{eq:b2} 
    \begin{aligned}
        b_2 \leq & \frac{M_2^2 L}{n\eta^2 }(\|\bx_k - \bbx_k\|^4 + \|\bx_{k+1} - \bbx_{k+1}\|^4). 
    \end{aligned}
    \ee
    Plugging $\Omega_k^4 \leq \frac{192n L_g^2}{\gamma^2 \rho^2}$ from Lemma \ref{lem:bound} into \eqref{recur-1} and \eqref{recur-3} gives
    \bee \begin{aligned}
        \Omega_1^{k+1} \leq &\left(1-\frac{\alpha}{2} + \frac{96 \gamma^2}{\alpha} \right) \Omega_1^k
           + \frac{96 \gamma^2}{\alpha} \Omega_3^k 
           + \frac{24 \eta^2}{\alpha} \frac{192n L_g^2}{\gamma^2 \rho^2} + \frac{24 \eta^2}{\alpha} n L_g^2,\\
        \Omega_3^{k+1}  \leq  &\frac{64}{\hat{\rho}}\gamma^2 \Omega_1^k + \left( 1- \frac{\hat{\rho}}{2}\right) \Omega_3^k + \frac{16}{\hat{\rho}} \eta^2 \frac{192n L_g^2}{\gamma^2 \rho^2} + \frac{16}{\hat{\rho}} \eta^2 n L_g^2.  \\
    \end{aligned}
    \eee
    Based on the above inequalities and using the proof by induction, it is not difficult to verify that for small enough but fixed $C_{\gamma}$ and $C_{\eta}$,   
    \[ \|\bx_k - \bbx_k\|^2 \leq \frac{c_0\eta^2}{\gamma^4},  \]
    where $c_0 > 0$ is a constant independent of $k$, $\eta$ and $\gamma$. Then, we have
    \be \label{eq:b2-f} 
    \begin{aligned}
        b_2 \leq & \frac{c_0M_2^2 L}{n\gamma^4 }(\|\bx_k - \bbx_k\|^2 + \|\bx_{k+1} - \bbx_{k+1}\|^2). 
    \end{aligned}
    \ee
    For $b_3$, we have 
    \be \label{eq:b3} 
    \begin{aligned}
        b_3 \leq & \frac{3L}{4} \times 4\|\hbx_{k+1} - \hbx_{k}\|^2 = \frac{3L}{n} \|\sum_{i=1}^n (x_{i,k+1} - x_{i,k})\|^2  \\
        = & \frac{3L}{n} \| \sum_{i=1}^n \left(\Pcal_{\Mcal}(x_{i,k} + \gamma (s_{i,k} - \widetilde{x}_{i,k}) - \eta v_{i,k}) - x_{i,k} \right) \|^2 \\
        \leq & \frac{12L}{n} \sum_{i=1}^n \| \gamma (s_{i,k} - \widetilde{x}_{i,k}) - \eta v_{i,k} \|^2 
        \leq  \frac{24 L}{n}(\| \gamma (\bW - I) \tbx_k \|^2 + \eta^2 \|\bd_k\|^2) \\
        \leq & \frac{24L}{n}\left(8 \gamma^2 (\|\bx_k - \tbx_k  \|^2 + \|\bx_k - \bbx_k\|^2) + \eta^2 (\|\hbd_k\|^2 + \|\bd_k - \hbd_k\|^2) \right) \\
        = & \frac{192L\gamma^2}{n} (\|\bx_k - \tbx_k  \|^2 + \|\bx_k - \bbx_k\|^2) + \frac{24L\eta^2}{n} ( n\|\hg_k\|^2 + \|\bd_k - \hbd_k\|^2),
    \end{aligned}\ee
    where we use the 2-Lipschitz continuity in the first and second inequalities, the third inequality is due to $\|v_{i,k}\| \leq \|d_{i,k}\|$, and the fourth inequality is from $\sigma_{\max}(W - I) \leq 2$. 

    Plugging \eqref{eq:b1-f}, \eqref{eq:b2-f}, and \eqref{eq:b3} into \eqref{eq:dec-lip} yields \eqref{eq:descent}.   
\end{proof}

\begin{theorem}
    Suppose that Assumptions \ref{assum-w} and \ref{ass:lip} hold. Let $\gamma = C_{\gamma} \alpha$ and $\eta = C_{\eta}\gamma \rho/L$.
    For sufficiently small $C_{\eta}$ and $C_{\gamma}$, we have
    \be \label{eq:complexity} 
    \frac{1}{K}\sum_{k=1}^K \|\hg_k\|^2 \leq \frac{f(\bx_0) - \inf_{\bx} f + M}{cK},
    \ee
    where $c \in (0,1)$ and $M>0$ are two constants. 
\end{theorem}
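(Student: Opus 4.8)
\emph{Proof strategy.}
The plan is to fold the one-step decrease of Lemma~\ref{lem:descent} together with the coupled error recursions of Lemma~\ref{lem:recur} into a single Lyapunov inequality and then telescope. Two structural facts enable this. First, by the gradient-tracking identity established just before \eqref{recur-matrix} one has $\hat d_k=\hat g_k$, so $\Omega_5^k=\|\hbd_k\|^2=n\|\hg_k\|^2$ is exactly (a multiple of) the quantity the theorem controls; moving $\Omega_5^k$ to the left-hand side is what turns the linear recursion into a convergence estimate. Second, with $\gamma=C_\gamma\alpha$ and $\eta=C_\eta\gamma\rho/L$ and $C_\gamma,C_\eta$ small, Lemma~\ref{lem:bound} guarantees the standing requirements $\|\bx_k-\bbx_k\|\le\delta$ and $2\gamma\|\tbx_k-\bx_k\|+\eta\|\bd_k\|\le R/4$ used in Lemmas~\ref{lem:recur} and~\ref{lem:descent} (as well as $\Omega_5^k\le nL_g^2$), so both lemmas apply at every iteration.

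I would then introduce the potential $\Phi_k=f(\bbx_k)+w_1\Omega_1^k+w_2\Omega_2^k+w_3\Omega_3^k+w_4\Omega_4^k$ with weights $w_i>0$ to be fixed. The only term on the right-hand side of \eqref{eq:descent} not evaluated at time $k$ is $\mathcal D_2\Omega_3^{k+1}/n$; I would absorb it by regarding the coefficient of $\Omega_3^{k+1}$ inside $\Phi_{k+1}$ as $w_3+\mathcal D_2/n$ and then substituting \eqref{recur-1}--\eqref{recur-4} for $\Omega_1^{k+1},\dots,\Omega_4^{k+1}$. Collecting coefficients yields an inequality of the form
\[
\Phi_{k+1}\le\Phi_k+\sum_{i=1}^{4}\bigl(\theta_i-w_i\bigr)\Omega_i^k-\bigl(\eta-(2Q+25)L\eta^2-n\theta_5\bigr)\|\hg_k\|^2,
\]
where each $\theta_i$ is an explicit nonnegative combination of the $i$-th column of the recursion matrix in \eqref{recur-matrix}, the constants $\mathcal D_j$, and the weights, and $\theta_5$ gathers all $\Omega_5^k$-coefficients (those carried by $w_1,w_3,w_4$, by $w_2$ through the $\Omega_4$ recursion, and by $\mathcal D_2$ through the $\Omega_3^{k+1}$ substitution). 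The heart of the proof is to choose the weights so that $\theta_i\le w_i$ for every $i$ while keeping $\eta-(2Q+25)L\eta^2-n\theta_5\ge c\eta$ for some $c\in(0,1)$. This is possible because, once $C_\gamma$ is fixed, every off-diagonal/leakage coefficient entering the $\theta_i$ has size $\Theta(\gamma^2/\alpha)=\Theta(C_\gamma^2\alpha)$ or $\Theta(\eta^2/(\gamma\rho))$, hence is small relative to the diagonal contractions $\alpha/2$, $\hat\rho/2$, $\gamma\rho/2$; the only genuinely large constants are $\mathcal D_1,\mathcal D_2=\Theta(\gamma^{-4})$, which force $w_3$ (and hence parts of $w_4,w_2$ and of $\theta_5$) to be large but finite. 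Thus the inequalities $\theta_i\le w_i$ form a diagonally dominant linear system for the $w_i$ with a bounded nonnegative solution, and one then shrinks $C_\eta$ (relative to a fixed power of $\gamma$) so that $n\theta_5\le\eta/2$, which is feasible because every term of $\theta_5$ carries a factor $\eta^2$.

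To conclude, I would telescope $\Phi_{k+1}\le\Phi_k-c\eta\|\hg_k\|^2$. Summing over $k=1,\dots,K$ gives $c\eta\sum_{k=1}^{K}\|\hg_k\|^2\le\Phi_1-\Phi_{K+1}$. Every $\Omega_i^{K+1}\ge0$ and $f(\bbx_{K+1})=f(\bar x_{K+1})\ge\inf_{\bx}f$ (finite, since $\Mcal$ is compact and $f$ continuous), so $\Phi_{K+1}\ge\inf_{\bx}f$; on the other side the initialization $\tbx_0=\bx_0$, $\tbd_0=\bd_0$ gives $\Omega_1^0=\Omega_2^0=0$, while $\Omega_3^0=\|\bx_0-\bbx_0\|^2$ and $\Omega_4^0=\|\bd_0-\hbd_0\|^2$ are bounded by constants because $\bx_0\in\Ncal$ and $\|\grad f_i\|\le L_g$, so one further application of the decrease at $k=0$ gives $\Phi_1\le\Phi_0\le f(\bx_0)+M$ with $M:=|f(\bx_0)-f(\bbx_0)|+w_3\Omega_3^0+w_4\Omega_4^0$ a constant. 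Dividing by $c\eta K$ and renaming the constant $c\eta\in(0,1)$ as $c$ yields \eqref{eq:complexity}.

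\emph{Main difficulty.} The substantive work is the weight/step-size bookkeeping of the second step: verifying that after folding \eqref{eq:descent} into \eqref{recur-1}--\eqref{recur-4} the resulting $4\times4$ coefficient system admits nonnegative weights with $\theta_i\le w_i$ while preserving a strictly positive coefficient on $\|\hg_k\|^2$. The delicate feature is that the contraction for the consensus error $\Omega_3$ is only $\hat\rho/2=\Theta(\gamma)$, so the terms $w_3\cdot(\eta^2/\hat\rho)$ and $(\mathcal D_2/n)\cdot(\eta^2/\hat\rho)$ in $\theta_5$ scale like negative powers of $\gamma$ times $\eta^2$; dominating them by $\eta/2$ forces $\eta$, and hence $C_\eta$, to be small relative to a power of $\gamma$, and checking that this single choice is simultaneously compatible with all the weight inequalities is the least routine part of the argument.
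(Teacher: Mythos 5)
Your proposal is correct in substance and rests on exactly the same three ingredients as the paper's argument --- the one-step decrease of Lemma~\ref{lem:descent}, the error recursions of Lemma~\ref{lem:recur}, the a priori bounds of Lemma~\ref{lem:bound}, and the identity $\hat d_k=\hat g_k$ --- but it organizes them differently. You build a weighted Lyapunov function $\Phi_k=f(\bbx_k)+\sum_i w_i\Omega_i^k$ and prove a per-iteration decrease $\Phi_{k+1}\le\Phi_k-c\eta\|\hg_k\|^2$ by solving a $4\times 4$ diagonally dominant system for the weights; the paper instead sums each recursion \eqref{recur-1}--\eqref{recur-4} over $k$ via the elementary fact $\sum_k a_k\le\frac{1}{1-\zeta}\sum_k b_k+\frac{a_1}{1-\zeta}$, proves the aggregate bounds \eqref{eq:iso-grad} (cumulative errors $\lesssim \eta^2\sum_k\|\hg_k\|^2+{\rm const}$) by induction on $K$, and only then plugs these into the telescoped \eqref{eq:descent}. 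The two formulations are dual: your weight inequalities $\theta_i\le w_i$ are precisely the solvability condition for the paper's system of summed recursions, so the same smallness requirements on $C_\gamma$ and $C_\eta$ appear in both. Your version buys a cleaner per-iteration monotonicity statement (standard in the compressed-decentralized literature) at the cost of explicitly exhibiting the weights, which --- as you correctly flag --- must absorb the large coefficients $\mathcal{D}_1,\mathcal{D}_2=\Theta(\gamma^{-4})$ and force $C_\eta$ to be small relative to a power of $\gamma$; the paper's version hides the weights but pays with a slightly awkward induction on $K$ and unspecified constants $c_i,e_i,o_i$. One small point of care: in your telescoping you should retain, as you do, that the $\mathcal{D}_2\Omega_3^{k+1}/n$ term of \eqref{eq:descent} is evaluated at $k+1$ and must be folded into the effective weight on $\Omega_3^{k+1}$ before substituting \eqref{recur-3}; this is handled correctly and is the only place where your bookkeeping genuinely departs from a verbatim transcription of the paper's.
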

\begin{proof}
Note that only the term $- (\eta - (2 M +  25)L\eta^2)  \|\hg_k\|^2$ is negative in the right handside of \eqref{eq:descent}. To establish the convergence, we suffice to show that there exist positive constants $c_1, e_1,  c_2, e_2,  c_3, e_3, c_4, e_4$ independent of $\eta$ such that
\be \label{eq:iso-grad} \begin{aligned}
    & \sum_{k=1}^K \|\bx_k - \bbx_k\|^2 \leq c_1 \eta^2 \sum_{k=1}^{K-1} \|\hg_k\|^2 + e_1, \; \sum_{k=1}^K \|\bx_k - \tbx_k\|^2 \leq c_2 \eta^2 \sum_{k=1}^{K-1} \|\hg_k\|^2 + e_2,
    \\
    & \sum_{k=1}^K \|\bd_k - \hbd_k\|^2 \leq c_3 \sum_{k=1}^{K-1} \eta^2 \|\hg_k\|^2 + e_3, 
    \; \sum_{k=1}^K \|\bd_k - \tbd_k\|^2 \leq c_4 \sum_{k=1}^{K-1} \eta^2 \|\hg_k\|^2 + e_4.
\end{aligned}
\ee

The proof can be proceeded by induction. It is easy to see that \eqref{eq:iso-grad} holds for large enough $e_i, i=1,\dots 4$. Assume that \eqref{eq:iso-grad} holds at $K$.  
Let us give the fact that: for any positive sequences $\{a_k\}, \{b_k\}$ satisfying 
\[ a_{k+1} \leq \zeta a_k + b_k \]
with $\zeta \in (0,1)$, it holds
\[ \sum_{k=1}^K a_k \leq \frac{1}{1-\zeta} \sum_{k=1}^{K-1} b_{k} + \frac{a_1}{1-\zeta}.  \]
Applying the above inequality to \eqref{recur-1}, \eqref{recur-2}, \eqref{recur-3}, and \eqref{recur-4} yields
\[ \begin{aligned}
    \sum_{k=1}^{K+1} \Omega_1^k \leq & \frac{4}{\alpha} \sum_{k=1}^{K} \left( \frac{96 \gamma^2}{\alpha} \Omega_3^k 
           + \frac{24 \eta^2}{\alpha} \Omega_4^k + \frac{24 n \eta^2}{\alpha}\|\hg_k\|^2  \right) + \frac{4 o_1}{\alpha} \\
    \sum_{k=1}^{K+1} \Omega_2^{k+1} \leq  &\frac{4}{\alpha} \sum_{k=1}^K \left( \frac{288 L^2\gamma^2}{\alpha} \Omega_1^k + \frac{288 L^2 \gamma^2}{\alpha} \Omega_3^k  + \frac{24 \gamma^2 + 72L^2 \eta^2} {\alpha} \Omega_4^k  + \frac{72 L^2 \eta^2}{\alpha} \Omega_5^k \right) + \frac{4 o_2}{\alpha}  \\
      \sum_{k=1}^{K+1}  \Omega_3^{k+1}  \leq  & \frac{2}{\hat{\rho}} \sum_{k=1}^K \left( \frac{64}{\hat{\rho}}\gamma^2 \Omega_1^k  + \frac{16}{\hat{\rho}} \eta^2 \Omega_4^k + \frac{16n \eta^2}{\hat{\rho}} \|\hg_k\|^2 \right) + \frac{2o_3}{\hat{\rho}} \\
        \sum_{k=1}^{K+1}  \Omega_4^{k+1} \leq & \frac{4}{\gamma \rho} \sum_{k=1}^K \left( \frac{288 L^2\gamma}{\rho} \Omega_1^k +   \frac{24\gamma}{\rho} \Omega_2^k 
        +  \frac{288 L^2\gamma}{\rho} \Omega_3^k +
        \frac{72 n L^2\eta^2}{\gamma \rho } \|\hg_k\|^2 \right) + \frac{4o_4}{\gamma\rho},       
\end{aligned}
\]
where $o_i, i=1,\dots,4$ are finite constants independent of $k$. Then, there exists sufficiently small but fixed $C_{\gamma}$ and $C_{\eta}$ such that \eqref{eq:iso-grad} holds at $K+1$. 

Summing \eqref{eq:descent} over $k=0, \dots,K$ and plugging \eqref{eq:iso-grad} lead to 
\[ c \eta \sum_{k=1}^K \|\hg_k\|^2 \leq f(\bx_0) - f(\bx_{k+1}) + M, \]
where $c \in (0,1), M> 0$ are finite constants. This implies \eqref{eq:complexity}.

\end{proof}

\section{Numerical experiments}
In this section, we present the numerical comparisons among our proposed DPRGC, the decentralized projected Riemannian gradient tracking method (DPRGT) \cite{deng2023decentralized}, and the decentralized Riemannian gradient tracking method (DRGTA) \cite{chen2021decentralized}. We set $\gamma =1$ for DPRGC in all experiments. 

Consider the decentralized principal component analysis problem:
\be \label{prob:pca}
\min _{\mathbf{x} \in \mathcal{M}^{n}}-\frac{1}{2 n} \sum_{i=1}^{n} \text{tr}(x_{i}^{\top} A_{i}^{\top} A_{i} x_{i}), \quad \text { s.t. } \quad x_{1}=\ldots=x_{n},
\ee
where $\Mcal^n:=\underbrace{{\rm St}(d,r) \times \cdots \times {\rm St}(d,r)}_{n}$, $A_{i} \in \mathbb{R}^{m_{i} \times d}$ is the local data matrix in $i$-th agent, and $m_{i}$ is the sample size.
Note that for any solution $x^*$ of \eqref{prob:pca}, $x^*Q$ with an orthogonal matrix $Q \in \R^{r\times r}$ is also a solution. We use the function
\[ d_s(x, x^*) := \min_{Q\in \R^{r\times r},\; Q^\top Q = QQ^\top = I_d} \; \|xQ - x^*\| \]
to compute the distance between two points $x$ and $x^*$.

\subsection{Synthetic dataset}
We fix $m_{1}=\ldots=m_{n}=1000, d=10$, and $r=5$. We then generate a matrix $B \in \R^{1000n  \times d}$ and do the singular value decomposition
\[ B = U \Sigma V^\top, \]
where $U \in \R^{1000n \times d}$ and $V \in \R^{d\times d}$ are orthogonal matrices, and $\Sigma \in \R^{d \times d}$ is a diagonal matrix. To control the distributions of the singular values, we set $\tilde{\Sigma} = {\rm diag} (\xi^j)$ with $\xi \in (0,1)$. Then, $A$ is set as
\[ A = U \tilde{\Sigma} V^\top \in \R^{1000n \times d}. \]
$A_i$ is obtained by randomly splitting the rows of $A$ to $n$ subsets with equal cardinalities. It is easy to check the first $r$ columns of $V$ form the solution of \eqref{prob:pca}. In the experiments, we set $\xi$ and $n$ to $0.8$ and $8$, respectively.

\begin{figure}[htp]
	\centering
	\includegraphics[width = 0.45 \textwidth]{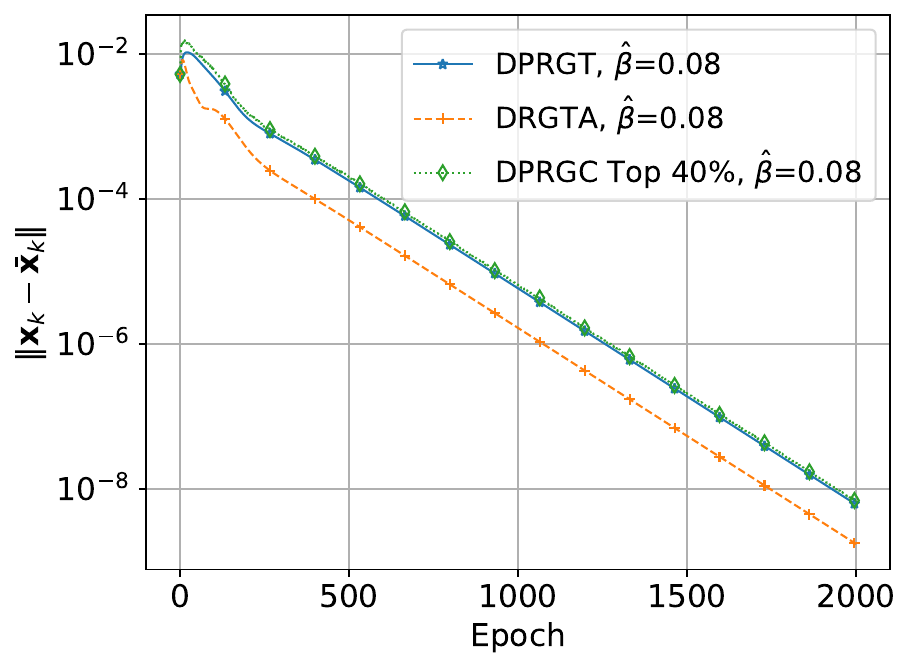}
	\includegraphics[width = 0.45 \textwidth]{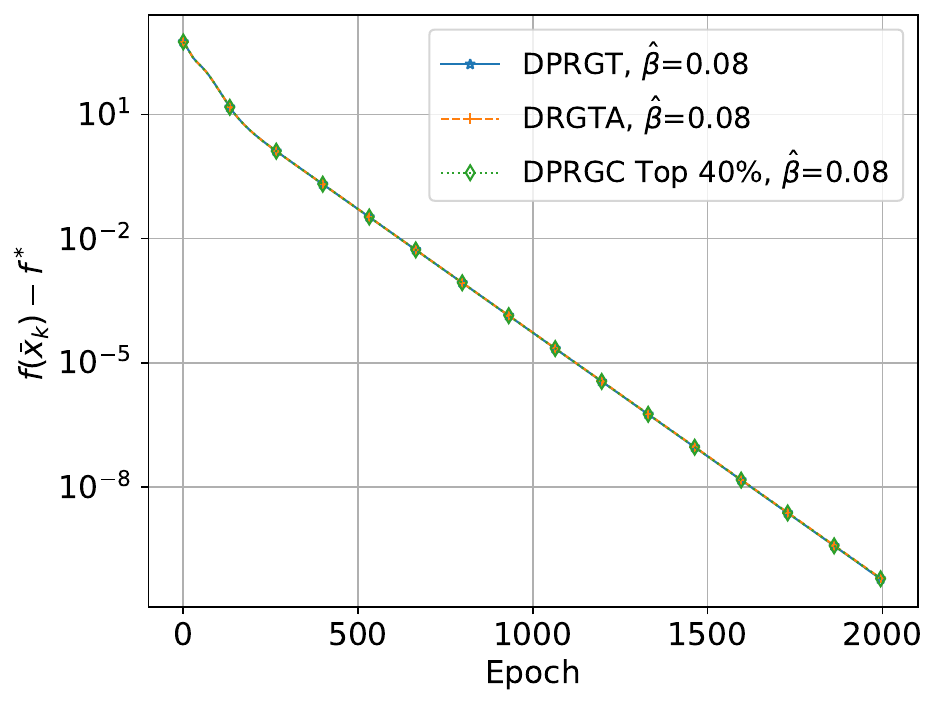} \\
	\includegraphics[width = 0.45 \textwidth]{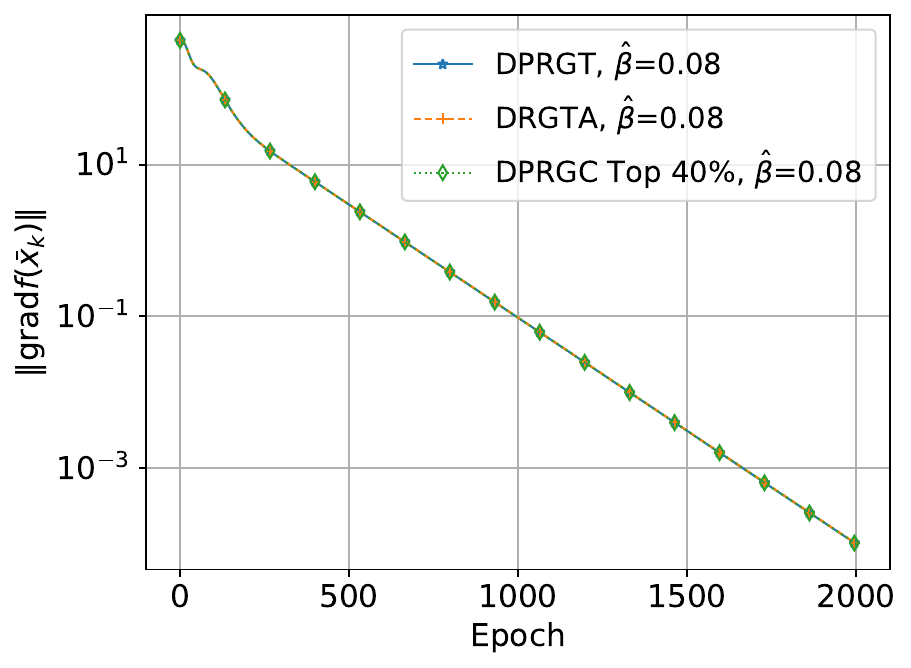}
	\includegraphics[width = 0.45 \textwidth]{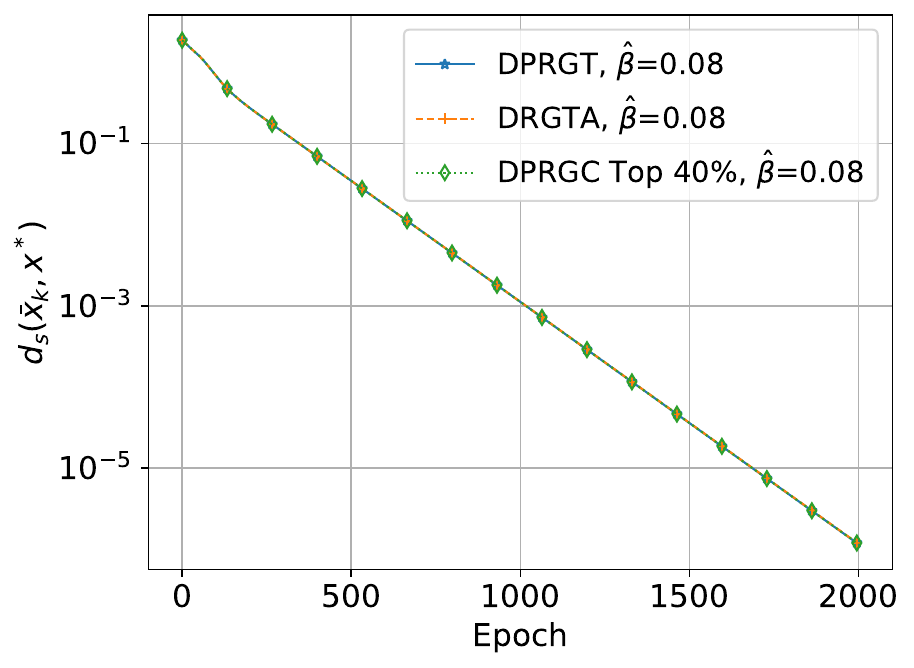}
	\caption{Iterative curves of different algorithms with respect to epochs on a synthetic dataset. DPRGT coincides with DPRGC when there is no compression.}	
	\label{fig:num-pca-alg-epoch}
\end{figure}

\begin{figure}[htp]
	\centering
	\includegraphics[width = 0.45 \textwidth]{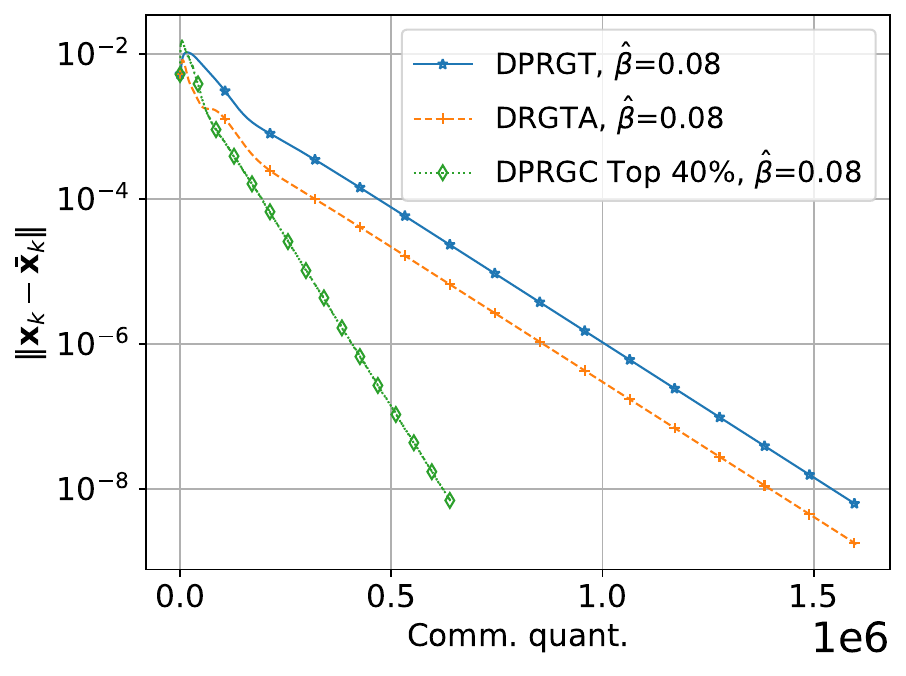}
	\includegraphics[width = 0.45 \textwidth]{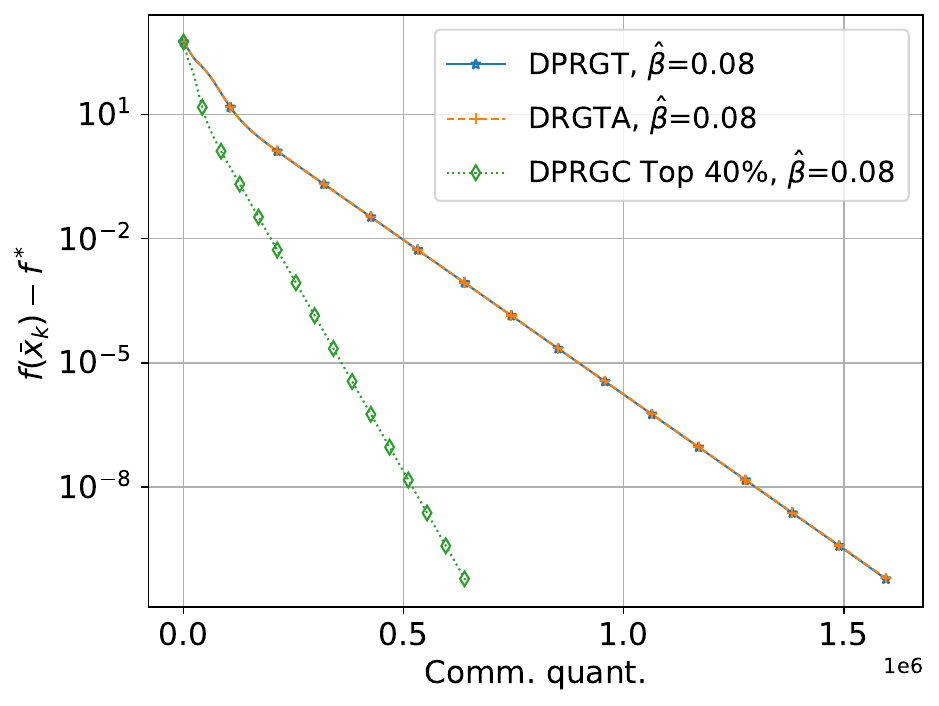} \\
	\includegraphics[width = 0.45 \textwidth]{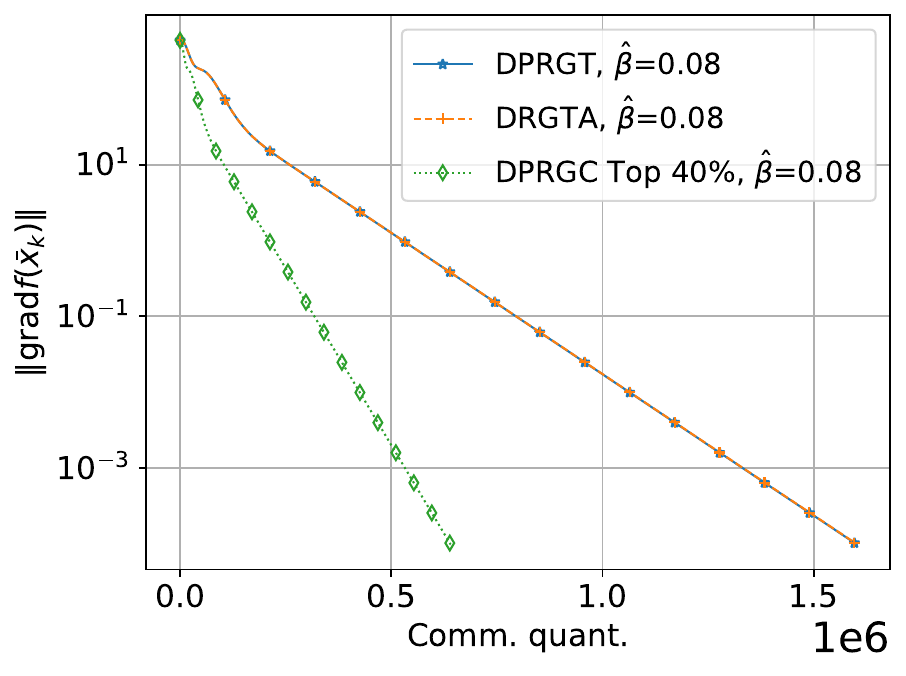}
	\includegraphics[width = 0.45 \textwidth]{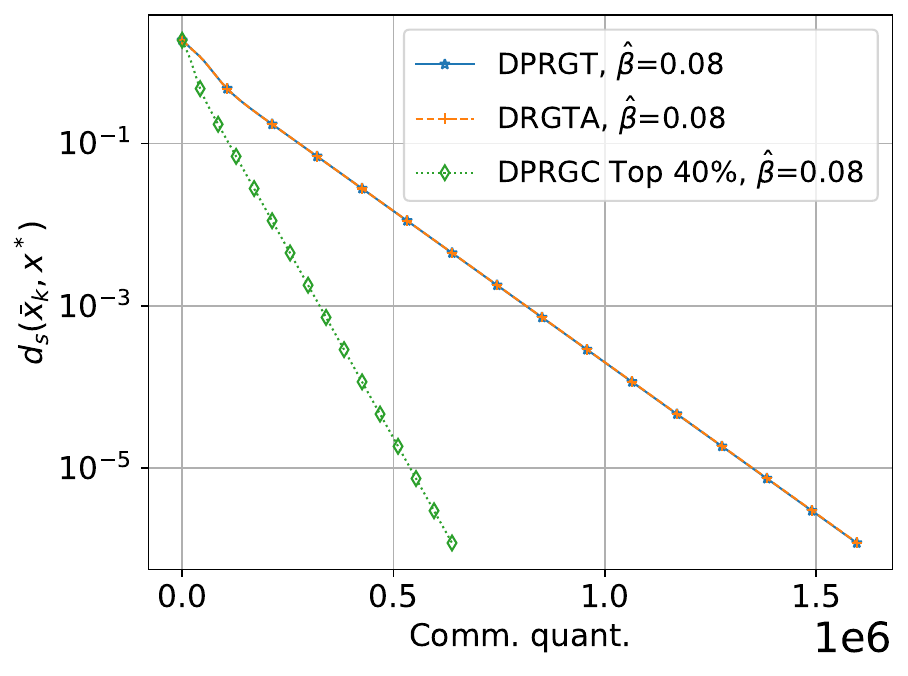}
	\caption{Iterative curves of different algorithms with respect to communication quantities on a synthetic dataset. DPRGT coincides with DPRGC when there is no compression.}	
	\label{fig:num-pca-alg-quant}
\end{figure}

We employ fixed step sizes for all algorithms. For all algorithms, we use the step size $\alpha=\frac{\hat{\beta}n}{\sum_{i=1}^n m_i}$. The grid search is utilized to find the best $\hat{\beta}$. We choose the polar decomposition as the retraction operator for DRGTA. We test several graph matrices to model the topology across the agents, namely, the Erdos-Renyi (ER) network with probability $p = 0.3, 0.6$, and the Ring network. Throughout this section, we select the mixing matrix $W$ to be the Metropolis constant edge weight matrix \cite{shi2015extra}. As single-step consensus is able to guarantee the convergence, we use $W$ instead of multiple-step consensus $W^t$ ($t > 1$) for all algorithms.

The results of different algorithms are presented in Figures \ref{fig:num-pca-alg-epoch} and \ref{fig:num-pca-alg-quant}. Note that DPRGT can be seen as a special case of DPRGC, where $\gamma = 1$ and no compression is used (i.e., $\mathcal{C}$ is the identity mapping). The iterative curves of different algorithms with respect are quite close. However, DPRGC has significant improvement in terms of the communication quantities, which is measured by the total number of entries communicated in all iterations. Here, the compression operator $\mathcal{C}(x)$ keeps the $40\%$ largest entries of each column of $x$. Figure \ref{fig:num-pca-alg-quant} shows that the same accuracies can be achieved by DPRGC with only $40\%$ communication costs.

\begin{figure}[htp]
	\centering
    \includegraphics[width = 0.45 \textwidth]{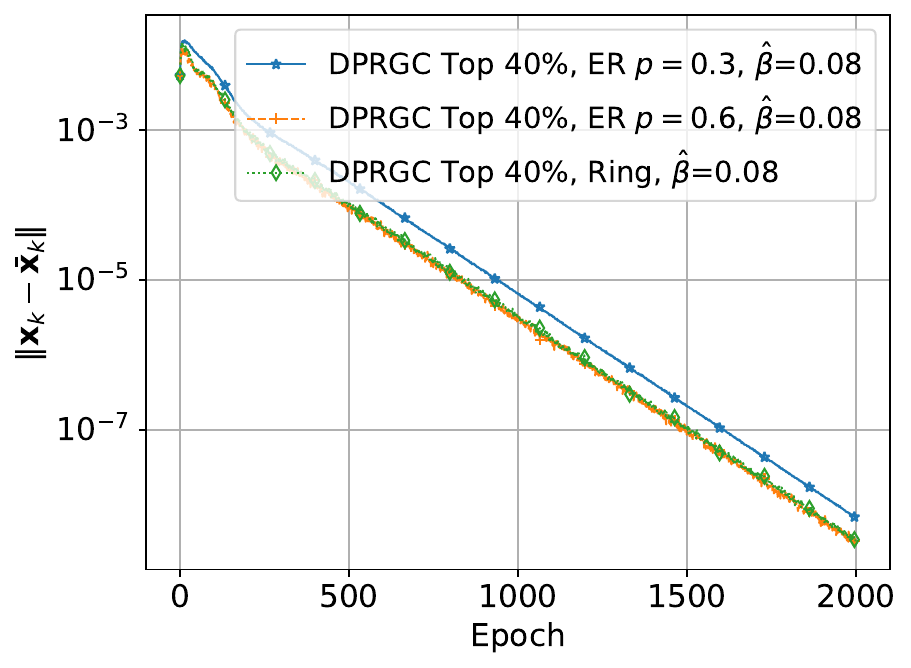}
    \includegraphics[width = 0.45 \textwidth]{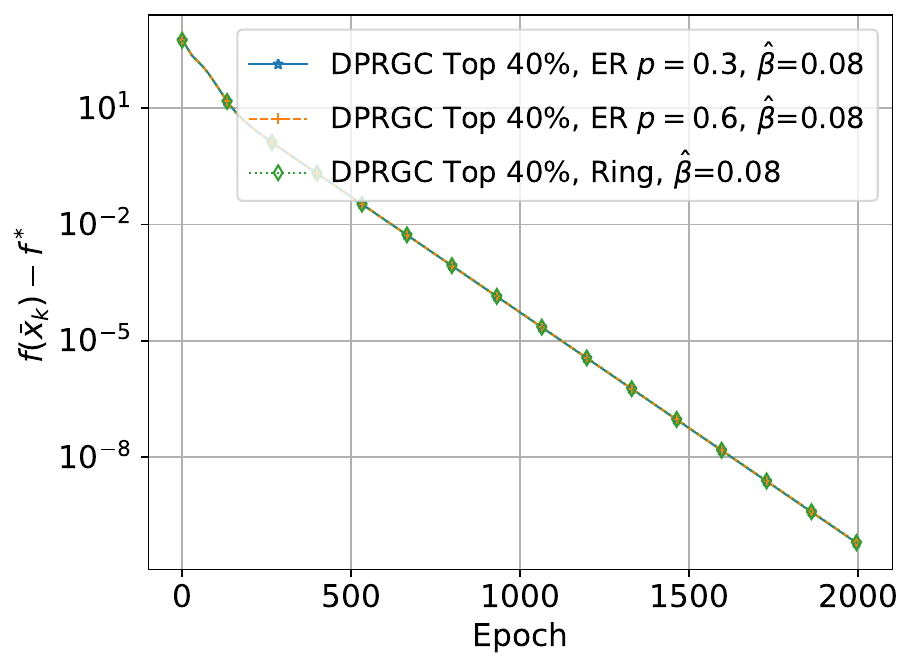} \\
    \includegraphics[width = 0.45 \textwidth]{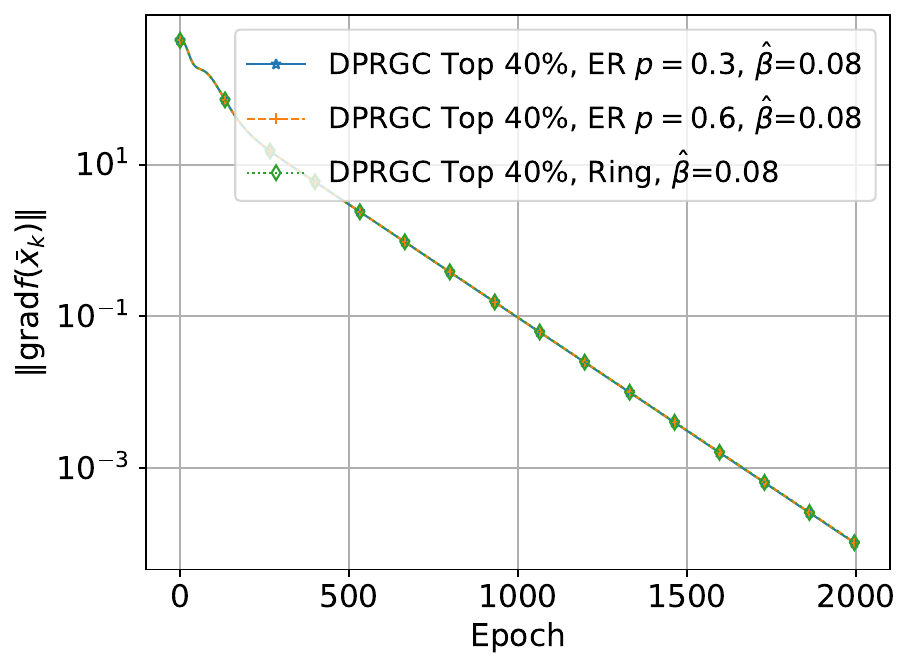}
    \includegraphics[width = 0.45 \textwidth]{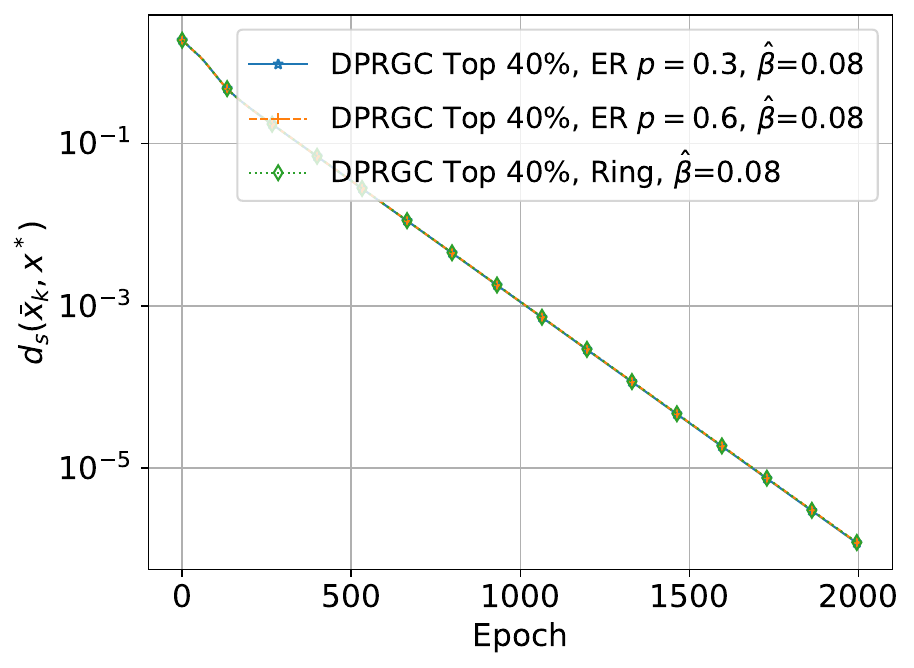}
	\caption{Iterative curves of DPRGC with different graph networks on a synthetic dataset. DPRGT coincides with DPRGC when there is no compression.}	
	\label{fig:num-pca-alg}
\end{figure}

\begin{figure}[htp]
	\centering
	\includegraphics[width = 0.45 \textwidth]{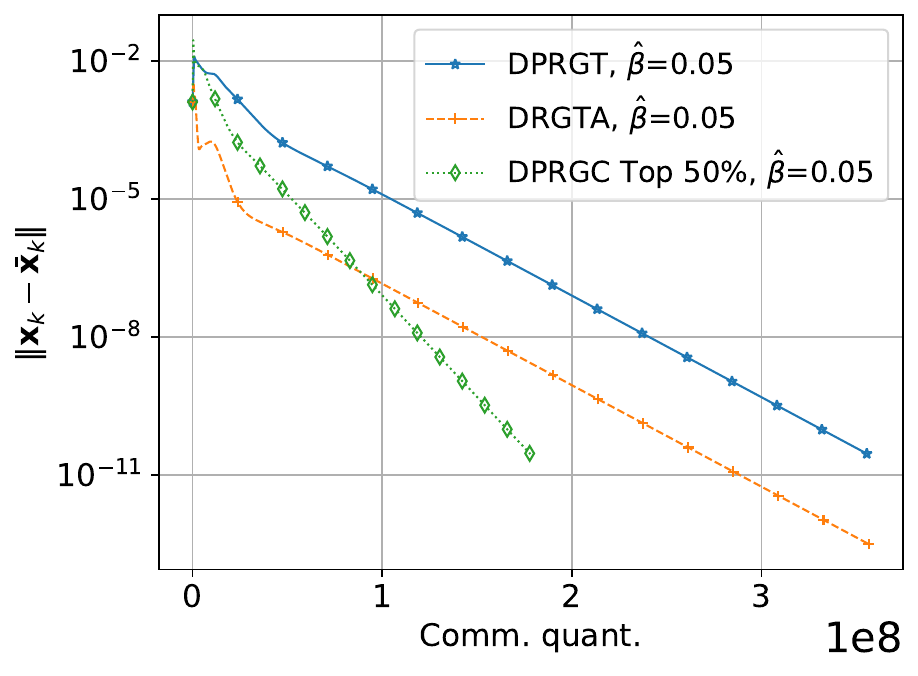}
	\includegraphics[width = 0.45 \textwidth]{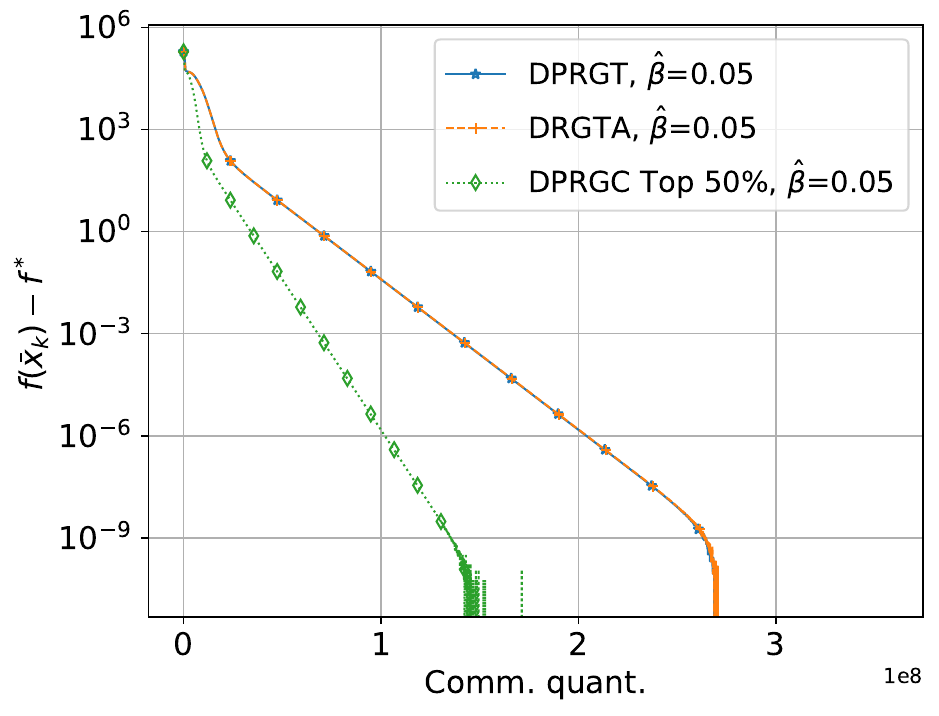} \\
	\includegraphics[width = 0.45 \textwidth]{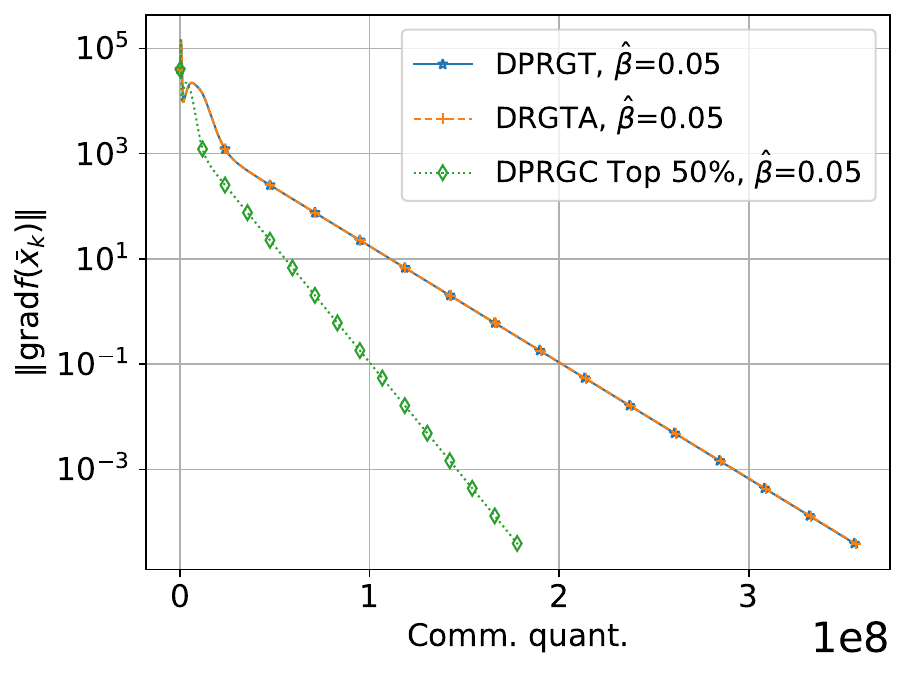}
	\includegraphics[width = 0.45 \textwidth]{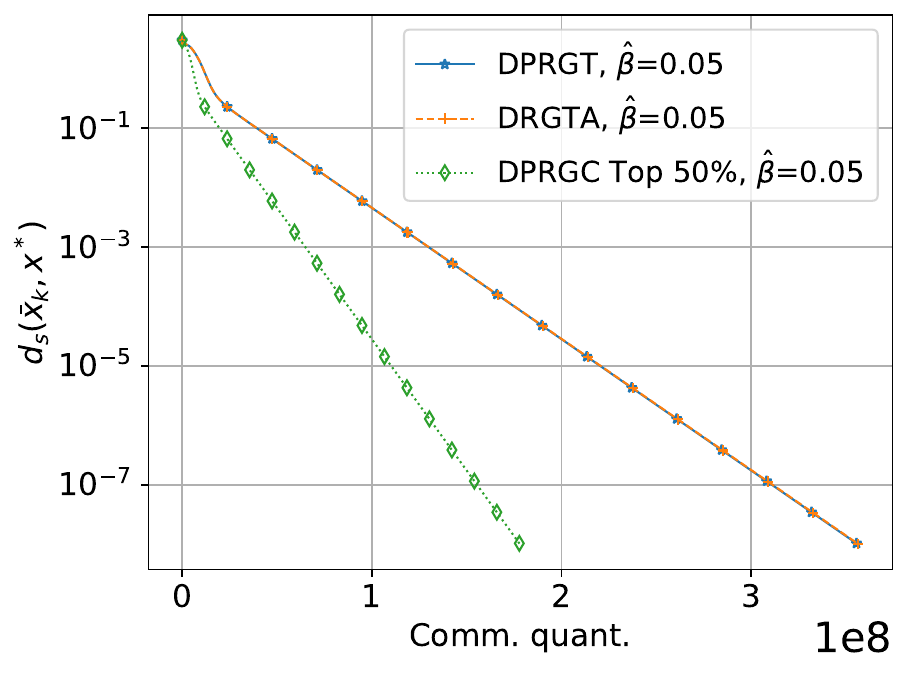}
	\caption{Iterative curves of different algorithms with respect to epochs on the Mnist dataset. DPRGT coincides with DPRGC when there is no compression.}
	\label{fig:num-pca-alg-quant-mnist}
\end{figure}

We also test the impacts of different graph networks in Figure \ref{fig:num-pca-alg}. It can be seen that there is not much difference in the iterative curves of DPRGC when using different networks. A dense graph leads to a fast convergence speed of the consensus.

\subsection{Mnist dataset}
To evaluate the efficiency of our proposed method, we also perform numerical tests on the Mnist dataset \cite{lecun1998mnist}. The testing images consist of 60000 handwritten images of size $32 \times 32$ and are used to generate $A_i$'s. We first normalize the data matrix by dividing 255 and randomly split the data into $n=8$ agents with equal cardinality. Then, each agent holds a local matrix $A_i$ of dimension $\frac{60000}{n} \times 784$. We compute the first 5 principal components, i.e., $d=784, r=5$.

For all algorithms, we use the fixed step sizes $\alpha = \frac{\hat{\beta}}{60000}$ with a best-chosen $\hat{\beta}$.
Similar to the above setting, we see from Figure \ref{fig:num-pca-alg-quant-mnist} that DPRGC converges to a point with similar accuracy by only using half communication costs.

\section{Conclusion}
Our study addresses the significant challenge of decentralized optimization on compact manifolds, focusing on the issue of multi-step consensus and communication efficiency in projection/retraction-type algorithms. By investigating the smoothness structure and the asymptotic 1-Lipschitz continuity of manifold constraints, we successfully demonstrate that single-step consensus is both feasible and effective. Our findings are supported by the development of a novel communication-efficient gradient algorithm, which incorporates communication compression to minimize per-iteration communication demands. We also establish that our method achieves an iteration complexity of $\mathcal{O}(\epsilon^{-1})$, compatible with the Euclidean framework. Through extensive numerical experiments, our approach is shown to outperform existing state-of-the-art methods, highlighting its efficacy and potential for practical deployment in decentralized manifold optimization scenarios.

\bibliographystyle{siamplain}
\bibliography{ref}

\begin{thebibliography}{10}

\bibitem{absil2009optimization}
{\sc P.-A. Absil, R.~Mahony, and R.~Sepulchre}, {\em Optimization Algorithms on
  Matrix Manifolds}, Princeton University Press, 2009.

\bibitem{alistarh2017qsgd}
{\sc D.~Alistarh, D.~Grubic, J.~Li, R.~Tomioka, and M.~Vojnovic}, {\em Qsgd:
  Communication-efficient sgd via gradient quantization and encoding}, Advances
  in neural information processing systems, 30 (2017).

\bibitem{alistarh2018convergence}
{\sc D.~Alistarh, T.~Hoefler, M.~Johansson, N.~Konstantinov, S.~Khirirat, and
  C.~Renggli}, {\em The convergence of sparsified gradient methods}, Advances
  in Neural Information Processing Systems, 31 (2018).

\bibitem{arjovsky2016unitary}
{\sc M.~Arjovsky, A.~Shah, and Y.~Bengio}, {\em Unitary evolution recurrent
  neural networks}, in International conference on machine learning, PMLR,
  2016, pp.~1120--1128.

\bibitem{balashov2021gradient}
{\sc M.~Balashov and R.~Kamalov}, {\em The gradient projection method with
  {A}rmijo’s step size on manifolds}, Computational Mathematics and
  Mathematical Physics, 61 (2021), pp.~1776--1786.

\bibitem{bianchi2012convergence}
{\sc P.~Bianchi and J.~Jakubowicz}, {\em Convergence of a multi-agent projected
  stochastic gradient algorithm for non-convex optimization}, IEEE Transactions
  on Automatic Control, 58 (2012), pp.~391--405.

\bibitem{boumal2023introduction}
{\sc N.~Boumal}, {\em An Introduction to Optimization on Smooth Manifolds},
  Cambridge University Press, 2023.

\bibitem{chen2023decentralizediclr}
{\sc J.~Chen, H.~Ye, M.~Wang, T.~Huang, G.~Dai, I.~Tsang, and Y.~Liu}, {\em
  Decentralized riemannian conjugate gradient method on the stiefel manifold},
  in The Twelfth International Conference on Learning Representations, 2024.

\bibitem{chen2021decentralized}
{\sc S.~Chen, A.~Garcia, M.~Hong, and S.~Shahrampour}, {\em Decentralized
  {R}iemannian gradient descent on the {S}tiefel manifold}, in International
  Conference on Machine Learning, PMLR, 2021, pp.~1594--1605.

\bibitem{chen2021local}
{\sc S.~Chen, A.~Garcia, M.~Hong, and S.~Shahrampour}, {\em On the local linear
  rate of consensus on the stiefel manifold}, IEEE Transactions on Automatic
  Control,  (2023).

\bibitem{cho2017riemannian}
{\sc M.~Cho and J.~Lee}, {\em Riemannian approach to batch normalization},
  Advances in Neural Information Processing Systems, 30 (2017).

\bibitem{clarke1995proximal}
{\sc F.~H. Clarke, R.~J. Stern, and P.~R. Wolenski}, {\em Proximal smoothness
  and the lower-{C2} property}, Journal of Convex Analysis, 2 (1995),
  pp.~117--144.

\bibitem{davis2020stochastic}
{\sc D.~Davis, D.~Drusvyatskiy, and Z.~Shi}, {\em Stochastic optimization over
  proximally smooth sets}, arXiv:2002.06309,  (2020).

\bibitem{deng2023decentralized}
{\sc K.~Deng and J.~Hu}, {\em Decentralized projected riemannian gradient
  method for smooth optimization on compact submanifolds}, arXiv preprint
  arXiv:2304.08241,  (2023).

\bibitem{di2016next}
{\sc P.~Di~Lorenzo and G.~Scutari}, {\em {NEXT}: In-network nonconvex
  optimization}, IEEE Transactions on Signal and Information Processing over
  Networks, 2 (2016), pp.~120--136.

\bibitem{eryilmaz2022understanding}
{\sc S.~B. Eryilmaz and A.~Dundar}, {\em Understanding how orthogonality of
  parameters improves quantization of neural networks}, IEEE Transactions on
  Neural Networks and Learning Systems,  (2022).

\bibitem{foote1984regularity}
{\sc R.~L. Foote}, {\em Regularity of the distance function}, Proceedings of
  the American Mathematical Society, 92 (1984), pp.~153--155.

\bibitem{hong2017prox}
{\sc M.~Hong, D.~Hajinezhad, and M.-M. Zhao}, {\em {Prox-PDA}: The proximal
  primal-dual algorithm for fast distributed nonconvex optimization and
  learning over networks}, in International Conference on Machine Learning,
  PMLR, 2017, pp.~1529--1538.

\bibitem{horvoth2022natural}
{\sc S.~Horv{\'o}th, C.-Y. Ho, L.~Horvath, A.~N. Sahu, M.~Canini, and
  P.~Richt{\'a}rik}, {\em Natural compression for distributed deep learning},
  in Mathematical and Scientific Machine Learning, PMLR, 2022, pp.~129--141.

\bibitem{hu2022riemannian}
{\sc J.~Hu, R.~Ao, A.~M.-C. So, M.~Yang, and Z.~Wen}, {\em Riemannian natural
  gradient methods}, SIAM Journal on Scientific Computing, 46 (2024),
  pp.~A204--A231.

\bibitem{hu2020brief}
{\sc J.~Hu, X.~Liu, Z.~Wen, and Y.~Yuan}, {\em A brief introduction to manifold
  optimization}, Journal of the Operations Research Society of China, 8 (2020),
  pp.~199--248.

\bibitem{hu2023achieving}
{\sc J.~Hu, J.~Zhang, and K.~Deng}, {\em Achieving consensus over compact
  submanifolds}, arXiv preprint arXiv:2306.04769,  (2023).

\bibitem{huang2018orthogonal}
{\sc L.~Huang, X.~Liu, B.~Lang, A.~Yu, Y.~Wang, and B.~Li}, {\em Orthogonal
  weight normalization: Solution to optimization over multiple dependent
  stiefel manifolds in deep neural networks}, in Proceedings of the AAAI
  Conference on Artificial Intelligence, vol.~32, 2018.

\bibitem{koloskova2019decentralized2}
{\sc A.~Koloskova, T.~Lin, S.~U. Stich, and M.~Jaggi}, {\em Decentralized deep
  learning with arbitrary communication compression}, arXiv preprint
  arXiv:1907.09356,  (2019).

\bibitem{koloskova2019decentralized}
{\sc A.~Koloskova, S.~Stich, and M.~Jaggi}, {\em Decentralized stochastic
  optimization and gossip algorithms with compressed communication}, in
  International Conference on Machine Learning, PMLR, 2019, pp.~3478--3487.

\bibitem{lecun1998mnist}
{\sc Y.~LeCun}, {\em The mnist database of handwritten digits}, http://yann.
  lecun. com/exdb/mnist/,  (1998).

\bibitem{qu2017harnessing}
{\sc G.~Qu and N.~Li}, {\em Harnessing smoothness to accelerate distributed
  optimization}, IEEE Transactions on Control of Network Systems, 5 (2017),
  pp.~1245--1260.

\bibitem{sarlette2009consensus}
{\sc A.~Sarlette and R.~Sepulchre}, {\em Consensus optimization on manifolds},
  SIAM Journal on Control and Optimization, 48 (2009), pp.~56--76.

\bibitem{scutari2019distributed}
{\sc G.~Scutari and Y.~Sun}, {\em Distributed nonconvex constrained
  optimization over time-varying digraphs}, Mathematical Programming, 176
  (2019), pp.~497--544.

\bibitem{seide20141}
{\sc F.~Seide, H.~Fu, J.~Droppo, G.~Li, and D.~Yu}, {\em 1-bit stochastic
  gradient descent and its application to data-parallel distributed training of
  speech dnns.}, in Interspeech, vol.~2014, Singapore, 2014, pp.~1058--1062.

\bibitem{shah2017distributed}
{\sc S.~M. Shah}, {\em Distributed optimization on {R}iemannian manifolds for
  multi-agent networks}, arXiv:1711.11196,  (2017).

\bibitem{shi2015extra}
{\sc W.~Shi, Q.~Ling, G.~Wu, and W.~Yin}, {\em {EXTRA}: An exact first-order
  algorithm for decentralized consensus optimization}, SIAM Journal on
  Optimization, 25 (2015), pp.~944--966.

\bibitem{singh2021squarm}
{\sc N.~Singh, D.~Data, J.~George, and S.~Diggavi}, {\em Squarm-sgd:
  Communication-efficient momentum sgd for decentralized optimization}, IEEE
  Journal on Selected Areas in Information Theory, 2 (2021), pp.~954--969.

\bibitem{stich2018sparsified}
{\sc S.~U. Stich, J.-B. Cordonnier, and M.~Jaggi}, {\em Sparsified sgd with
  memory}, Advances in neural information processing systems, 31 (2018).

\bibitem{sun2020improving}
{\sc H.~Sun, S.~Lu, and M.~Hong}, {\em Improving the sample and communication
  complexity for decentralized non-convex optimization: Joint gradient
  estimation and tracking}, in International Conference on Machine Learning,
  PMLR, 2020, pp.~9217--9228.

\bibitem{tang2018communication}
{\sc H.~Tang, S.~Gan, C.~Zhang, T.~Zhang, and J.~Liu}, {\em Communication
  compression for decentralized training}, Advances in Neural Information
  Processing Systems, 31 (2018).

\bibitem{tang2019deepsqueeze}
{\sc H.~Tang, X.~Lian, S.~Qiu, L.~Yuan, C.~Zhang, T.~Zhang, and J.~Liu}, {\em
  Deepsqueeze: Decentralization meets error-compensated compression}, arXiv
  preprint arXiv:1907.07346,  (2019).

\bibitem{tatarenko2017non}
{\sc T.~Tatarenko and B.~Touri}, {\em Non-convex distributed optimization},
  IEEE Transactions on Automatic Control, 62 (2017), pp.~3744--3757.

\bibitem{vorontsov2017orthogonality}
{\sc E.~Vorontsov, C.~Trabelsi, S.~Kadoury, and C.~Pal}, {\em On orthogonality
  and learning recurrent networks with long term dependencies}, in
  International Conference on Machine Learning, PMLR, 2017, pp.~3570--3578.

\bibitem{wai2017decentralized}
{\sc H.-T. Wai, J.~Lafond, A.~Scaglione, and E.~Moulines}, {\em Decentralized
  frank--wolfe algorithm for convex and nonconvex problems}, IEEE Transactions
  on Automatic Control, 62 (2017), pp.~5522--5537.

\bibitem{wang2022decentralized}
{\sc L.~Wang and X.~Liu}, {\em Decentralized optimization over the {S}tiefel
  manifold by an approximate augmented {L}agrangian function}, IEEE
  Transactions on Signal Processing, 70 (2022), pp.~3029--3041.

\bibitem{wangni2018gradient}
{\sc J.~Wangni, J.~Wang, J.~Liu, and T.~Zhang}, {\em Gradient sparsification
  for communication-efficient distributed optimization}, Advances in Neural
  Information Processing Systems, 31 (2018).

\bibitem{xu2015augmented}
{\sc J.~Xu, S.~Zhu, Y.~C. Soh, and L.~Xie}, {\em Augmented distributed gradient
  methods for multi-agent optimization under uncoordinated constant stepsizes},
  in IEEE Conference on Decision and Control, 2015, pp.~2055--2060.

\bibitem{ye2021deepca}
{\sc H.~Ye and T.~Zhang}, {\em {DeEPCA}: Decentralized exact {PCA} with linear
  convergence rate}, The Journal of Machine Learning Research, 22 (2021),
  pp.~10777--10803.

\bibitem{yuan2018exact}
{\sc K.~Yuan, B.~Ying, X.~Zhao, and A.~H. Sayed}, {\em Exact diffusion for
  distributed optimization and learning {Part {II}}: Convergence analysis},
  IEEE Transactions on Signal Processing, 67 (2018), pp.~724--739.

\bibitem{zeng2018nonconvex}
{\sc J.~Zeng and W.~Yin}, {\em On nonconvex decentralized gradient descent},
  IEEE Transactions on Signal Processing, 66 (2018), pp.~2834--2848.

\bibitem{zhao2022beer}
{\sc H.~Zhao, B.~Li, Z.~Li, P.~Richt{\'a}rik, and Y.~Chi}, {\em Beer: Fast $ o
  (1/t) $ rate for decentralized nonconvex optimization with communication
  compression}, Advances in Neural Information Processing Systems, 35 (2022),
  pp.~31653--31667.

\end{thebibliography}



\end{document}